\theoremstyle{plain}
\newtheorem {lemma}{Lemma}
\newtheorem {proposition}[lemma]{Proposition}
\newtheorem {theorem}[lemma]{Theorem}
\newtheorem {corollary}[lemma]{Corollary}
\newtheorem {replacement lemma}[lemma]{Replacement Lemma}
\theoremstyle{definition}
\newtheorem{definition}[lemma]{Definition}
\newtheorem{remark}[lemma]{Remark}
\newtheorem {example}[lemma]{Example}
\newtheorem*{proofthmm1}{Proof of Theorem 26}
\newcommand{\N}{\mathbb{N}}
\newcommand{\X}{\langle X \rangle}
\newcommand{\GKdim}{\operatorname{GKdim}}
\newcommand{\nod}{\operatorname{nod}}
\newcommand{\st}{\operatorname{st}}
\newcommand{\ind}{\operatorname{ind}}
\title[Locally finite weighted Leavitt path algebras]{Locally finite weighted Leavitt path algebras}
\author{Raimund Preusser}
\address{Department of Mathematics,
University of Brasilia, Brazil}
\email{raimund.preusser@gmx.de}
\subjclass[2000]{16S10, 16W10, 16W50, 16D70} 
\keywords{Weighted Leavitt path algebra, locally finite, GK dimension, Noetherian}
\begin{document}

\begin{abstract} 
A group graded $K$-algebra $A=\bigoplus\limits_{g\in G} A_g$ is called {\it locally finite} if $\dim_K A_g < \infty$ for every $g\in G$. We characterise the weighted graphs $(E,w)$ for which the weighted Leavitt path algebra $L_K(E,w)$ is locally finite with respect to its standard grading. We also prove that the locally finite weighted Leavitt path algebras are precisely the Noetherian ones and that $L_K(E,w)$ is locally finite iff $(E,w)$ is finite and the Gelfand-Kirillov dimension of $L_K(E,w)$ equals $0$ or $1$. Further it is shown that a locally finite weighted Leavitt path algebra is isomorphic to a locally finite Leavitt path algebra and therefore is isomorphic to a finite direct sum of matrix algebras over $K$ and $K[X,X^{-1}]$.
\end{abstract}

\maketitle

\section{Introduction}
The weighted Leavitt path algebras (wLpas) were introduced by R. Hazrat in \cite{hazrat13}. They generalise the Leavitt path algebras (Lpas). While the Lpas only embrace Leavitt's algebras $L_K(1,1+k)$ of module type $(1,k)$ where $k\geq 0$, the wLpas embrace all of Leavitt's algebras $L_K(n,n+k)$ of module type $(n,k)$ where $n\geq 1$ and $k\geq 0$. In \cite{hazrat-preusser} linear bases for wLpas were obtained. They were used to classify the simple and graded simple wLpas and the wLpas which are domains. In \cite{preusser} the Gelfand-Kirillov dimension of a weighted Leavitt path algebra $L_K(E,w)$, where $K$ is a field and $(E,w)$ is a finite weighted graph, was determined. Further finite-dimensional wLpas were investigated. It was shown that a finite-dimensional wLpa over a field $K$ is isomorphic to a finite-dimensional Lpa and therefore is isomorphic to a finite direct sum of matrix algebras over $K$. In this article we investigate locally finite weighted Leavitt path algebras $L_K(E,w)$ where $(E,w)$ is a nonempty, row-finite weighted graph and $K$ is a field. The main result is Theorem \ref{thmm}.

The rest of this paper is organised as follows. In Section 2 we recall some standard notation which is used throughout the paper. In Section 3 we recall the definition of a wLpa and the basis result from \cite{hazrat-preusser}. In Section 4 we characterise the weighted graphs $(E,w)$ for which the weighted Leavitt path algebra $L_K(E,w)$ is locally finite with respect to its standard grading. Further we show that $L_K(E,w)$ is locally finite iff $(E,w)$ is finite and $\GKdim L_K(E,w)\leq 1$ (while $L_K(E,w)$ is finite-dimensional iff $(E,w)$ is finite and $\GKdim L_K(E,w)=0$). In Section 5 we show that a locally finite wLpa is isomorphic to a locally finite Lpa. It follows that a locally finite weighted Leavitt path algebra $L_K(E,w)$ is isomorphic to a finite direct sum of matrix algebras over $K$ and $K[X,X^{-1}]$ and therefore is Noetherian. In Section 6 we show that a Noetherian wLpa is locally finite. In the last section we summarise the main results of Sections 4-6. 
\section{Notation}
If $X$ is any set, then we denote by $\X$ the set of all nonempty words over $X$ and by $\overline{\X}$ the set of all words over $X$ including the empty word. Together with juxtaposition of words $\X$ is a semigroup and $\overline{\X}$ a monoid. Let $A,B\in \X$. Then $B$ is called a {\it subword of $A$} if there are $C,D \in \overline{\X}$ such that $A=CBD$, a {\it prefix of $A$} if there is a $C \in \overline{\X}$ such that $A=BC$ and a {\it suffix of $A$} if there is a $C \in \overline{\X}$ such that $A=CB$. 

Throughout the paper $K$ denotes a field. If $X$ is a set, then we denote by $K\X$ the free associative $K$-algebra generated by $X$ (i.e. the $K$-vector space with basis $\X$ which becomes a $K$-algebra by linearly extending the juxtaposition of words).
\section{Weighted Leavitt path algebras}

\begin{definition}[\sc Directed graph]\label{defdg}
A {\it directed graph} is a quadruple $E=(E^0,E^1,s,r)$ where $E^0$ and $E^1$ are sets and $s,r:E^1\rightarrow E^0$ maps. The elements of $E^0$ are called {\it vertices} and the elements of $E^1$ {\it edges}. If $e$ is an edge, then $s(e)$ is called its {\it source} and $r(e)$ its {\it range}. $E$ is called {\it row-finite} if $s^{-1}(v)$ is a finite set for any vertex $v$ and {\it finite} if $E^0$ and $E^1$ are finite sets.
\end{definition}

\begin{definition}[{\sc Double graph of a directed graph}]\label{defddg}
Let $E$ be a directed graph. The directed graph $E_d=(E_d^0, E_d^1, s_d, r_d)$ where $E_d^0=E^0$, $ E_d^1=E^1\cup (E^1)^*$ where $(E^1)^*=\{e^*\mid e\in E^1\}$,
\[s_d(e)=s(e),~r_d(e)=r(e),~s_d(e^*)=r(e) \text{ and }r_d(e^*)=s(e)\text{ for any }e\in E^1\]
is called the {\it double graph of $E$}. We sometimes refer to the edges in the graph $E$ as {\it real edges} and the additional edges in $E_d$ (i.e. the elements of $(E^1)^*$) as {\it ghost edges}. 
\end{definition}

\begin{definition}[{\sc Path}]
Let $E$ be a directed graph. A {\it path} is a word $p=x_1\dots x_n\in \langle E^0\cup E^1\rangle$ such that either $x_i\in E^1~(i=1,\dots,n)$ and $r(x_i)=s(x_{i+1})~(i=1,\dots,n-1)$ or $n=1$ and $x_1\in E^0$. By definition, the {\it length} $|p|$ of $p$ is $n$ in the first case and $0$ in the latter case. $p$ is called {\it nontrivial} if $|p|>0$ and {\it trivial} if $|p|=0$. We set $s(p):=s(x_1)$ and $r(p):=r(x_n)$ (here we use the convention $s(v)=v=r(v)$ for any $v\in E^0$).
\end{definition}

\begin{definition}[\sc Path algebra]\label{defpa}
Let $E$ be a directed graph. The quotient $K\langle E^0\cup E^1\rangle/I $ where $I$ is the ideal of $K\langle E^0\cup E^1\rangle$ generated by the relations
\begin{enumerate}[(i)]
\item $vw=\delta_{vw}v$ for any $v,w\in E^0$ and
\item $s(e)e=e=er(e)$ for any $e\in E^1$
\end{enumerate}
is called the {\it path algebra of $E$} and is denoted by $P_K(E)$.
\end{definition}

\begin{remark}
The paths in $E$ form a basis for the path algebra $P_K(E)$.
\end{remark}

\begin{definition}[{\sc Weighted graph}]\label{defsg}
A {\it weighted graph} is a pair $(E,w)$ where $E$ is a directed graph and $w:E^1\rightarrow \N=\{1,2,\dots\}$ is a map. If $e\in E^1$, then $w(e)$ is called the {\it weight} of $e$. A weighted graph $(E,w)$ is called {\it row-finite} (resp. {\it finite}) if $E$ is row-finite (resp. finite). In this article all weighted graphs are assumed to be row-finite and to have at least one vertex.
\end{definition}

\begin{remark}
Let $(E,w)$ be a weighted graph. In \cite{hazrat13} and \cite{hazrat-preusser}, $E^1$ was denoted by $E^{\st}$. What was denoted by $E^1$ in \cite{hazrat13} and \cite{hazrat-preusser} is denoted by $\hat E^1$ in this article (see the next definition).
\end{remark}

\begin{definition}[{\sc Directed graph associated to a weighted graph}]
Let $(E,w)$ be a weighted graph. The directed graph $\hat E=(\hat E^0, \hat E^1, \hat s, \hat r)$ where $\hat E^0=E^0$, $\hat E^1:=\{e_1,\dots,e_{w(e)}\mid e\in E^1\}$, $\hat s(e_i)=s(e)$ and $\hat r(e_i)=r(e)$ is called the {\it directed graph associated to $(E,w)$}. 
\end{definition}

In the following $(E,w)$ denotes a weighted graph. For a vertex $v\in E^0$ we set $w(v):=\max\{w(e)\mid e\in s^{-1}(v)\}$ with the convention $\max \emptyset=0$. $\hat E_d$ denotes the double graph of the directed graph $\hat E$ associated to $(E,w)$.

\begin{definition} [{\sc Weighted Leavitt path algebra}]\label{def3}
The quotient $P_K(\hat E_d)/I$ where $I$ is the ideal of $P_K(\hat E_d)$ generated by the relations
\begin{enumerate}[(i)]
\item $\sum\limits_{e\in s^{-1}(v)}e_ie_j^*= \delta_{ij}v$ for all $v\in E^0$ and $1\leq i, j\leq w(v)$ and
\medskip 

\item $\sum\limits_{1\leq i\leq w(v)}e_i^*f_i= \delta_{ef}r(e)$ for all $v\in E^0$ and $e,f\in s^{-1}(v)$
\end{enumerate}
is called {\it weighted Leavitt path algebra (wLpa) of $(E,w)$} and is denoted by $L_K(E,w)$. In relations (i) and (ii), we set $e_i$ and $e_i^*$ zero whenever $i > w(e)$. 
\end{definition}

\begin{remark}\label{grading}
Set $n:=\max\{w(e) \mid e \in E^{1}\}$. One can define a $\mathbb Z^n$-grading on $L_K(E,w)$ as follows (cf. \cite[Proposition 5.7]{hazrat13}). For $v\in E^0$ define $\deg(v)=0$ and for $e \in E^{1}$ and $1\leq i\leq w(e)$ define $\deg(e_i)=\epsilon_i$ and $\deg(e_i^*)=-\epsilon_i$. Here $\epsilon_i$ denotes the element of $\mathbb Z^n$ whose $i$-th component is $1$ and whose other components are $0$. This grading is called the {\it standard grading of $L_K(E,w)$}.
\end{remark}

\begin{example}\label{wlpapp}
It is easy to see that the wLpa of a weighted graph consisting of one vertex and $n+k$ loops of weight $n$ is isomorphic to the Leavitt algebra $L_K(n,n+k)$, for details see \cite[Example 4]{hazrat-preusser}. 
\end{example}

\begin{example}\label{exex1}
$(E,w)$ is called {\it unweighted} if $w(e)=1$ for all $e \in E^{1}$. It easy to see that if $(E,w)$ is unweighted, then $L_K(E,w)$ is graded isomorphic to the the usual Leavitt path algebra $L_K(E)$ (with respect to the standard gradings of $L_K(E,w)$ and $L_K(E)$) . 
\end{example}

We call a path in $\hat E_d$ a {\it d-path}. While the d-paths form a basis for the path algebra $P_K(\hat E_d)$, a basis for the weighted Leavitt path algebra $L_K(E,w)$ is formed by the nod-paths, which we will define in the next definition.

A vertex $v\in E^0$ is called a {\it sink} if $s^{-1}(v)=\emptyset$ and {\it regular} otherwise. For any regular vertex $v\in E^0$ fix an $e^{v}\in s^{-1}(v)$ such that $w(e^{v})=w(v)$. The $e^{v}$'s are called {\it special edges}. The words
\[e^v_i(e^v_j)^*~(v\in E^0,1\leq i,j\leq w(v))\text{ and }e^*_1f_1~(v\in E^0,e,f\in s^{-1}(v))\]
in $\langle \hat E_d^0\cup\hat E_d^1\rangle$ are called {\it forbidden}.   

\begin{definition}[{\sc Nod-path}]\label{deffnod}
A {\it normal d-path} or {\it nod-path} is a d-path such that none of its subwords is forbidden. 
\end{definition}

\begin{theorem}[Hazrat, Preusser, 2017] \label{thmhp}
Let $K\langle \hat E_d^0\cup\hat E_d^1\rangle_{\nod}$ be the linear span of all nod-paths in $K\langle \hat E_d^0\cup\hat E_d^1\rangle$. The canonical map $K\langle \hat E_d^0\cup\hat E_d^1\rangle_{\nod}\rightarrow L_K(E,w)$ is an isomorphism of $K$-vector spaces. In particular the images of the nod-paths in $L_K(E,w)$ form a basis for $L_K(E,w)$.
\end{theorem}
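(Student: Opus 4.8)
The natural approach is a rewriting / Gröbner--Shirshov argument, i.e. Bergman's Diamond Lemma applied to the path algebra $P_K(\hat E_d)$, whose basis of d-paths plays the role of the monomials. I would turn the two defining families of relations into reduction rules by solving each for its forbidden leading term. Relation (i) (with the special edge $e^v$) gives the rule
\[
e^v_i(e^v_j)^* \;\longmapsto\; \delta_{ij}v-\!\!\sum_{e\in s^{-1}(v),\,e\neq e^v}\!\! e_ie_j^*,
\]
and relation (ii) (with $i=1$) gives the rule
\[
e_1^*f_1\;\longmapsto\;\delta_{ef}r(e)-\sum_{2\leq i\leq w(v)} e_i^*f_i ,
\]
where a letter whose index exceeds the relevant weight is read as $0$. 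By construction, the words irreducible under these two rules are exactly the d-paths containing no forbidden subword, i.e. the nod-paths.

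To run the Diamond Lemma I first need a well-order on d-paths that is compatible with concatenation and strictly decreased by every reduction. I would use the degree-lexicographic order induced by a well-order $\prec$ on the alphabet $\hat E_d^1$ in which, for each vertex $v$ and each index, the special real letter $e^v_i$ is largest among the real letters $e_i$ with $e\in s^{-1}(v)$, and among the ghost letters with a fixed underlying edge the index-$1$ letter $e_1^*$ is largest. (Row-finiteness makes each fiber $s^{-1}(v)$ finite, so such a well-order exists.) One then checks directly that each rule rewrites its leading word into a combination of strictly smaller d-paths: the vertex terms $v$ and $r(e)$ reduce the length, while the remaining length-preserving terms $e_ie_j^*$ (resp. $e_i^*f_i$) are lexicographically smaller because $e\neq e^v$ (resp. $i\geq 2$). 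This yields termination, and in particular shows that the nod-paths span $L_K(E,w)$.

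The heart of the proof is confluence, i.e. resolving every ambiguity. Since both leading words have length two there are no inclusion ambiguities, and the only overlap ambiguities are the length-three words in which a rule of type (i) and a rule of type (ii) share their middle letter. A short case analysis shows these come in exactly two families: $e^v_i(e^v_1)^*h_1$ with $h\in s^{-1}(v)$ (type (i) overlapping type (ii)) and $e_1^*e^v_1(e^v_j)^*$ with $e\in s^{-1}(v)$ (type (ii) overlapping type (i)). For each I would reduce the word in both possible ways down to nod-paths and verify that the two results agree. In both cases the length-two tails cancel identically after reindexing the double sum, and what remains is a comparison of single-letter terms governed by Kronecker deltas; splitting into the cases $h=e^v$ vs. $h\neq e^v$ (resp. $e=e^v$ vs. $e\neq e^v$) shows the two normal forms coincide.

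Granting the resolution of these two ambiguities, the Diamond Lemma delivers that the irreducible words, namely the nod-paths, form a $K$-basis of $P_K(\hat E_d)/I=L_K(E,w)$, which is the content of the theorem. The main obstacle is precisely the confluence computation of the preceding paragraph: the bookkeeping of the $\delta$-terms and the way relations (i) and (ii) interlock make it the only genuinely delicate point. By comparison the ordering and termination arguments are routine, as is the interaction of the rules with the underlying path-algebra relations $vw=\delta_{vw}v$ and $s(e)e=e=er(e)$, which merely absorb the stray vertices produced by the reductions.
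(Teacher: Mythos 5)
Your proposal is correct and takes essentially the same route as the paper's own proof: the paper establishes this theorem by citing Theorem 16 of Hazrat--Preusser, whose argument is precisely this application of Bergman's Diamond Lemma, with the same two reduction rules (solving relation (i) for the special-edge term $e^v_i(e^v_j)^*$ and relation (ii) for the index-$1$ term $e_1^*f_1$), the nod-paths as the irreducible words, and the same two overlap families to resolve. Your confluence check, splitting on $h=e^v$ versus $h\neq e^v$, does go through exactly as you describe.
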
 
\begin{proof}
See \cite[Theorem 16]{hazrat-preusser} and its proof.
\end{proof}

\section{Locally finite wLpas and their GK dimension}
A group graded $K$-algebra $A=\bigoplus\limits_{g\in G} A_g$ is called {\it locally finite} if $\dim_K A_g < \infty$ for every $g\in G$. In this section we characterise the weighted graphs whose wLpa is locally finite with respect to its standard grading. Further we show that $L_K(E,w)$ is locally finite iff $(E,w)$ is finite and $\GKdim L_K(E,w)\leq 1$. We start with some definitions. 

\begin{definition}[{\sc Closed path, cycle, entrance, exit}]
A {\it closed path (based at $v$)} is a nontrivial path $p$ such that $s(p)=r(p)=v$. A {\it cycle (based at $v$)} is a closed path $p=x_1\dots x_n$ based at $v$ such that $s(x_i)\neq s(x_j)$ for any $i\neq j$. An edge $e\in E^1$ is called {\it entrance} (resp. {\it exit}) of a cycle $x_1\dots x_n$ if there is an $i\in \{1,\dots,n\}$ such that $r(e)=r(x_i)$ (resp. $s(e)=s(x_i)$) and $e\neq x_i$.
\end{definition}

\begin{definition}[{\sc Tree, hereditary vertex set, edges in line}]\label{deftre}
If $u,v\in E^0$ and there is a path $p$ in $E$ such that $s(p)=u$ and $r(p)=v$, then we write $u\geq v$. Clearly $\geq$ is a preorder on $E^0$. If $u\in E^0$ then $T(u):=\{v\in E^0 \ |  \ u\geq v\}$ is called the {\it tree of $u$}. If $X\subseteq E^0$, we define $T(X):=\bigcup\limits_{v\in X}T(v)$. A subset $H\subseteq E^0$ is called {\it hereditary} if $T(H)\subseteq H$. Clearly any tree of a vertex is a hereditary subset of $E^0$. Two edges $e,f\in E^1$ are called {\it in line} if $e=f$ or $r(e)\geq s(f)$ or $r(e)\geq s(f)$
\end{definition}


\begin{definition}[{\sc Weighted subgraph defined by hereditary vertex set}]
Let $H\subseteq E^0$ be a hereditary subset. Set $E_H^0:=H$, $E_H^1:=\{e\in E^1\mid s(e)\in H\}$, $r_H:=r|_{E_H^1}$, $s_H=s|_{E_H^1}$ and $w_H:=w|_{E_H^1}$. Then $E_H:=(E_H^0,E_H^1,s_H,r_H)$ is a directed graph and $(E_H,w_H)$ a weighted graph. We call $(E_H,w_H)$ the {\it weighted subgraph of $(E,w)$ defined by $H$}.   
\end{definition}

\begin{definition}[{\sc Weighted and unweighted vertices and edges, weighted part of $(E,w)$}]
An element $v\in E^0$ (resp. $e\in E^{1}$) is called {\it weighted} if $w(v)>1$ (resp. $w(e)>1$) and {\it unweighted} otherwise. The subset of $E^0$ (resp. $E^1$) consisting of all weighted elements is denoted by $E_w^0$ (resp. $E^1_w$) and the subset of all unweighted elements by $E_{u}^0$ (resp. $E^1_{u}$). The weighted subgraph $(E_{T(E^{0}_w)},w_{T(E^{0}_w)})$ of $(E,w)$ defined by the hereditary subset $T(E^{0}_w)\subseteq E^0$ is called the {\it weighted part of $(E,w)$}.
\end{definition}

The next definition is based on an idea of P. N. Tan.

\begin{definition}[{\sc Well-behaved weighted part}]\label{defwb}
The weighted part of $(E,w)$ is called {\it weakly well-behaved} if the Conditions (i)-(iv) below hold.
\begin{enumerate}[(i)]
\item No vertex $v\in E_w^0$ emits two distinct weighted edges.
\item No vertex $v\in T(r(E^1_w))$ emits two distinct edges.
\item If $e,f\in E^1_w$ are not in line, then $T(r(e))\cap T(r(f))=\emptyset$.
\item No cycle in $(E,w)$ is based at a vertex $v\in T(r(E^1_w))$.
\end{enumerate}
The weighted part of $(E,w)$ is called {\it well-behaved} if it is weakly well-behaved and additionally Condition (v) below holds.
\begin{enumerate}[(v)]
\item There is no $n\geq 1$ and paths $p_1,\dots,p_n,q_1,\dots,q_n$ in $(E,w)$ such that $r(p_i)=r(q_i)~(1\leq i\leq n)$, $s(p_1)=s(q_n)$, $s(p_i)=s(q_{i-1})~(2\leq i \leq n)$ and for any $1\leq i \leq n$, the first letter of $p_i$ is a weighted edge, the first letter of $q_i$ is an unweighted edge and the last letters of $p_i$ and $q_i$ are distinct.
\end{enumerate}
\end{definition}

Each of the Conditions (i)-(v) in Definition \ref{defwb} above ``forbids" a certain constellation in the weighted graph $(E,w)$. The pictures below illustrate these forbidden constellations. A double headed arrow stands for a weighted edge (this notation is also an idea of P. N. Tan) and a dashed arrow stands for an unweighted edge. A dotted arrow stands for a path.
\begin{enumerate}[(i)]
\item \[\xymatrix@R-1.5pc{& \bullet\\\bullet \ar@{->>}[dr] \ar@{->>}[ur]& \\& \bullet.}\]
\item \[\xymatrix@R-1.5pc{& & & \bullet\\\bullet \ar@{->>}[r] & \bullet \ar@{.>}[r] & \bullet \ar[dr] \ar[ur]& \\& & & \bullet.}\]
\item \[\xymatrix{\bullet \ar@{->>}[r] & \bullet \ar@{.>}[r] & \bullet &\bullet \ar@{.>}[l] & \bullet. \ar@{->>}[l]}\]
\item \[\vcenter{\vbox{\xymatrix@R-1pc@C-0.5pc{
		&\bullet \ar[r]&\bullet \ar[rd]&\\
		\bullet \ar@{->>}[ru]&&&\bullet \ar[ld]\\
		&\bullet \ar[lu]&\bullet \ar@{.}[l]}}}
		~~~~\text{ resp. }~~~~
\vcenter{\vbox{\xymatrix@R-1pc@C-0.5pc{
		&&&\bullet \ar[r]&\bullet \ar[rd]&\\
		\bullet \ar@{->>}[r] & \bullet \ar@{.>}[r] &\bullet \ar[ru]&&&\bullet. \ar[ld]\\
		&&&\bullet \ar[lu]&\bullet \ar@{.}[l]}}}		
		\]
\item \[\xymatrix@R-1pc@C-0.5pc{
\bullet\ar@{-->}[d]\ar@{->>}[r]&\bullet\ar@{.>}[r]&\bullet\ar[r]&\bullet&\bullet\ar[l]&\bullet\ar@{.>}[l]&\bullet\ar@{-->}[l]\ar@{->>}[d]\\
\ar@{.}[d]&&&&&&\bullet\ar@{.>}[d]\\
\ar@{.}[d]&&&&&&\bullet\ar[d]\\
&&&&&&\bullet\\
\ar@{.}[u]&&&&&&\bullet\ar[u]\\
\ar@{.}[u]&&&&&&\bullet\ar@{.>}[u]\\
\bullet\ar@{->>}[u]\ar@{-->}[r]\ar@{.}[u]&\bullet\ar@{.>}[r]&\bullet\ar[r]&\bullet&\bullet\ar[l]&\bullet\ar@{.>}[l]&\bullet.\ar@{-->}[u]\ar@{->>}[l]
}\]
\end{enumerate}
$~$\\ 

We need two more definitions.
\begin{definition}[{\sc Index of a nontrivial nod-path}]
Let $p=x_1\dots x_n$ be a nontrivial nod-path. Then there are uniquely determined $e^{(i)}\in E^1,j_i\in \{1,\dots,w(e^{(i)})\}~(1\leq i\leq n)$ such that $x_i=e^{(i)}_{j_i}$ or $x_i=(e^{(i)}_{j_i})^*$ for any $1\leq i\leq n$. The positive integer $\ind(p):=\max\{j_i\mid 1\leq i\leq n\}$ is called the {\it index of $p$}. 
\end{definition}

\begin{definition}[{\sc Super-special and unweighted paths in $\hat E$}]
Let $p=e^{(1)}_{j_1}\dots e^{(n)}_{j_n}$, where $e^{(i)}\in E^1$ and $j_i\in \{1,\dots,w(e^{(i)})\}$ for any $1\leq i\leq n$, be a nontrivial path in $\hat E$. Then $p$ is called {\it super-special} if $j_i>\max \{w(f)\mid f\in s^{-1}(s(e^{(i)})), f\neq e^{(i)}\}$ (with the convention $\max \emptyset=0$) for any $1\leq i\leq n$. Note that if $p$ is super-special, then all $e^{(i)}$'s are special. $p$ is called {\it unweighted} if all $e^{(i)}$'s are unweighted.
\end{definition}

The next lemma describes a key property of weighted graphs whose weighted part is weakly well-behaved.
\begin{lemma}\label{lem3.-1}
Suppose the weighted part of $(E,w)$ is weakly well-behaved. Let $o=x_1\dots x_m$ be a nod-path such that $x_1=e_i$ for some $e\in E^1_w$ and $2\leq i\leq w(e)$. Then $o=p_1q_1^*\dots p_nq_n^*$ or $o=p_1q_1^*\dots p_{n-1}q_{n-1}^*p_n$ where $n\geq 1$, $p_1,\dots,p_n$ are super-special paths in $\hat E$ and $q_1,\dots,q_n$ are unweighted paths in $\hat E$.
\end{lemma}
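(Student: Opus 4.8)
The plan is to cut the nod-path $o$ into its maximal \emph{blocks} of consecutive real letters, respectively consecutive ghost letters. Since $x_1=e_i$ is a real letter, the first block is real, and by maximality real and ghost blocks alternate; writing the real blocks as $p_1,p_2,\dots$ and the ghost blocks as $q_1^*,q_2^*,\dots$ gives $o=p_1q_1^*\cdots$, ending either in a ghost block (the form $p_1q_1^*\cdots p_nq_n^*$) or in a real block (the form $p_1q_1^*\cdots p_{n-1}q_{n-1}^*p_n$). It then remains to show that every real block is super-special and that every $q_i$ is unweighted. The starting point is that $x_1=e_i$ is itself super-special: since $e\in E^1_w$, Condition~(i) forces $e$ to be the unique weighted edge emitted by $s(e)$, so $e=e^{s(e)}$ is special with $w(e)=w(s(e))$, and as $i\ge 2$ exceeds the weight $\le 1$ of every other edge out of $s(e)$, the letter $e_i$ is super-special.

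First I would deal with the real blocks. Each real block opens with a weighted edge — the first block with $e$, and (as the transition argument below shows) every later block with some weighted $b$ — so after its first letter it runs into $T(r(E^1_w))$. By Condition~(ii) every vertex of $T(r(E^1_w))$ emits a unique edge, hence each subsequent real letter is the only edge out of its source and is automatically super-special (its siblings have weight $0$). The same uniqueness shows that forward paths inside $T(r(E^1_w))$ are unique, and by Condition~(iv) there are no cycles there, so $T(r(E^1_w))$ is a forest of rays; I will use this repeatedly. The ghost-to-real transitions are forced by the nod-condition: if a ghost block ends in $(a)_1^*$ with $a$ unweighted and is followed by a real letter $b_{k'}$, then $s(b)=s(a)$, and were $k'=1$ the subword $(a)_1^*b_1$ would be the forbidden word $e_1^*f_1$; hence $k'\ge 2$, so $b$ is weighted of index $\ge 2$, and by Condition~(i) $b_{k'}$ is super-special. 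Thus every new real block begins exactly as $o$ did, and the analysis restarts.

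The heart of the matter is to prove that each ghost block is unweighted. At the exit of a real block its last letter is $h_l$ (super-special, with $r(h)=:z\in T(r(E^1_w))$) and the first ghost letter is $g_k^*$ with $r(g)=z$; since $h$ is special, the nod-condition rules out $g=h$ (else $h_lg_k^*$ is the forbidden word $e^v_i(e^v_j)^*$), so $g\neq h$. Now suppose some underlying edge of the ghost block is weighted and let $a_j$ be the first such, so $a_{j-1},\dots,a_1=g$ are unweighted and trace a path from $z_{j-1}:=r(a_j)$ back to $z$. As $a_j$ is weighted, $z_{j-1}\in T(r(E^1_w))$, whence $a_{j-1}\cdots a_1$ is the unique forward path $z_{j-1}\to z$. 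Comparing $a_j$ with the weighted edge $e$ (resp. $b$) that opened the current real block, both range-trees contain $z$, so Condition~(iii) forces $e$ and $a_j$ to be in line. Each case then collapses: if $r(a_j)\ge s(e)$, uniqueness of forward paths makes the unweighted trace $a_{j-1}\cdots a_1$ run through $s(e)$ and hence use the weighted edge $e$, a contradiction; if $r(e)\ge s(a_j)$ or $e=a_j$, uniqueness forces the last letter of the real block to be $a_1=g$, contradicting $g\neq h$, or in the degenerate case produces a closed path in $T(r(E^1_w))$, contradicting Condition~(iv). Hence no letter of the ghost block is weighted.

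I expect this last step — showing the \emph{entire} ghost block, not merely its first letter, is unweighted — to be the main obstacle, since it is exactly where Conditions~(iii) and~(iv) must be played off against the uniqueness of forward paths supplied by~(ii). The remaining ingredients (the block decomposition, super-specialness of the real blocks, and the forced restart at each ghost-to-real transition) are then essentially bookkeeping, and finiteness of $o$ guarantees that the alternation terminates in one of the two asserted forms.
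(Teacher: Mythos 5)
Your proposal is correct and takes essentially the same route as the paper's proof: the same decomposition into alternating real/ghost blocks, the same induction that restarts at each ghost-to-real transition (where the forbidden words $e_1^*f_1$ force index $\geq 2$), super-specialness of real blocks from Conditions (i)--(ii), and unweightedness of ghost blocks from Condition (iii) (``in line'') played off against uniqueness of forward paths (ii) and acyclicity (iv). Your three-case analysis simply spells out what the paper compresses into ``it is easy to deduce from Conditions (ii) and (iv) that $e^{(k)}=f^{(l)}$,'' yielding the same contradiction with the non-forbidden real-to-ghost junction.
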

\begin{proof}
Clearly there are paths $p_1,\dots,p_n$ and $q_1,\dots,q_n$ in $\hat E$ such that $o=p_1q_1^*\dots p_nq_n^*$ or $o=p_1q_1^*\dots p_{n-1}$ $q_{n-1}^*p_n$ (any nod-path starting with a real edge is of that form). We show by induction on $j$ that for any $1\leq j \leq n$, $p_j$ is super-special and $q_j$ is unweighted.\\
\\
\underline{$j=1$} Write $p_1=e^{(1)}_{i_1}\dots e^{(k)}_{i_k}$ (where $e^{(1)}=e$ and $i_1=i$) and $q_1=f^{(1)}_{j_1}\dots f^{(l)}_{j_l}$. It follows from Conditions (i) and (ii) in Definition \ref{defwb} that $p_1$ is super-special (since $e\in E^1_w$ and $2\leq i\leq w(e)$). Assume now that 
$q_1$ is not unweighted. Then there is an $1\leq t\leq l$ such that $f^{(t)}\in E^1_w$. It follows from Condition (iii) in Definition \ref{defwb} that $e^{(1)}=e$ and $f^{(t)}$ are in line. Now it is easy to deduce from Conditions (ii) and (iv) in Definition \ref{defwb} that $e^{(k)}=f^{(l)}$. But this yields a contradiction since $e^{(k)}$ is special and therefore $e^{(k)}_{i_k}(f^{(l)}_{j_l})^*$ is forbidden. Thus $q_1$ is unweighted.\\
\\
\underline{$j\rightarrow j+1$} Clearly the index of the first letter of $p_{j+1}$ is greater than $1$ since $q_j$ is unweighted. It follows as in the case $j=1$ that $p_j$ is super-special and $q_j$ is unweighted.
\end{proof}

Recall from \cite{preusser} that a {\it nod$^2$-path} is a nod-path $p$ such that $p^2$ is a nod-path. A {\it quasicycle} is a nod$^2$-path $p$ such that none of the subwords of $p^2$ of length $<|p|$ is a nod$^2$-path.
\begin{corollary}\label{cor3.0}
Suppose the weighted part of $(E,w)$ is well-behaved. Then $\{c,c^*\mid c \text{ is a cycle in }\hat E\}$ is the set of all quasicycles.  
\end{corollary}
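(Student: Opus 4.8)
The plan is to prove the two inclusions separately, using the structure of nod-paths under the well-behaved hypothesis. Recall that a quasicycle is a minimal nod$^2$-path, where being a nod$^2$-path means $p^2$ avoids all forbidden subwords. First I would establish the easy inclusion $\supseteq$: if $c=x_1\dots x_n$ is a cycle in $\hat E$ (so all letters are real edges and the underlying vertices are distinct with $r(c)=s(c)$), then $c^2=x_1\dots x_nx_1\dots x_n$ is a legitimate d-path, and I must check it is a nod-path, i.e. none of its subwords is forbidden. Since all letters of $c^2$ are real edges, the only forbidden words that could occur are of the form $e^v_i(e^v_j)^*$ (impossible, no ghost letters) or $e^*_1f_1$ (also impossible). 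Hence $c^2$ is automatically a nod-path, so $c$ is a nod$^2$-path, and the same reasoning applies verbatim to $c^*$ (whose letters are all ghost edges). The minimality condition defining a quasicycle then follows from the minimality built into the definition of a cycle (distinct source vertices), so $c$ and $c^*$ are quasicycles.

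The substantive direction is $\subseteq$: every quasicycle has the form $c$ or $c^*$ for a cycle $c$ in $\hat E$. Let $q$ be a quasicycle. The first step is to show $q$ cannot mix real and ghost edges in a way that involves a weighted edge of high index. Concretely, I would argue that if $q$ (or $q^2$) contained a letter $e_i$ with $e\in E^1_w$ and $i\geq 2$, then by Lemma \ref{lem3.-1} applied to $q^2$ (which is a nod-path starting appropriately after a cyclic shift), $q^2$ would decompose as an alternating product $p_1q_1^*\cdots$ of super-special and unweighted paths in $\hat E$. The key is that Condition (v) of well-behavedness (the extra condition beyond weakly well-behaved) is precisely engineered to forbid the ``return'' constellation that an alternating product of this shape would have to realise in order to close up into a square $q^2$. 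I expect this to rule out any genuinely weighted/alternating quasicycle, forcing $q$ to be a path all of whose letters are real, or all of whose letters are ghost.

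Once $q$ is known to consist purely of real edges (the ghost case being the mirror image, giving $c^*$), I would show $q$ must be a cycle in $\hat E$. Since $q^2$ is a nod-path consisting of real edges, $q$ is a closed path: writing $q=x_1\dots x_n$ with all $x_i$ real edges, the fact that $q^2=x_1\dots x_nx_1\dots x_n$ is a path in $\hat E$ forces $r(x_n)=s(x_1)$, so $q$ is closed. It remains to see the source vertices $s(x_i)$ are pairwise distinct, i.e. $q$ is a cycle rather than a concatenation of shorter closed paths. This is exactly where minimality enters: if some vertex repeated, $q^2$ would contain a proper subword of length $<|q|$ that is itself a closed path of real edges, hence (by the same automatic argument as in the $\supseteq$ direction) a nod$^2$-path, contradicting that $q$ is a quasicycle. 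Therefore $q$ is a cycle $c$ in $\hat E$.

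The main obstacle I anticipate is the purely-real/purely-ghost dichotomy in the second paragraph: ruling out alternating quasicycles. The real content of the corollary is that Condition (v) exactly blocks the only way an alternating nod-path built from super-special and unweighted pieces could close into a square. I would need to translate the combinatorial shape of $q^2=p_1q_1^*p_2q_2^*\cdots$ into the data $(p_i,q_i)$ of paths with matching ranges and the prescribed source-matching pattern in Condition (v), and verify that a hypothetical alternating quasicycle produces exactly such a configuration, yielding the contradiction. Making the indexing and the cyclic-shift bookkeeping line up with the precise statement of Condition (v) is the delicate part; the rest is the routine forbidden-subword analysis sketched above.
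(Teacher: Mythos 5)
Your overall route is the same as the paper's: after a cyclic shift, apply Lemma \ref{lem3.-1} to obtain the alternating super-special/unweighted decomposition and contradict Condition (v) of Definition \ref{defwb}; then reduce to purely real or purely ghost words and identify these with cycles in $\hat E$ using the minimality in the definition of a quasicycle. (The paper leaves your $\supseteq$ inclusion implicit; your argument for it is correct.) But there is a genuine gap in the $\subseteq$ direction: eliminating letters $e_i$ with $e\in E^1_w$ and $i\geq 2$ only yields $\ind(q)=1$, and this does \emph{not} by itself force $q$ to consist purely of real or purely of ghost edges, contrary to what you assert. A word such as $e_1f_1^*$ mixes real and ghost edges while having index $1$, and your Condition (v) machinery cannot reach it: Lemma \ref{lem3.-1} requires a first letter of index $\geq 2$, so no alternating decomposition is available in the index-$1$ case. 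Your final paragraph shows this is a conceptual misattribution rather than a slip of exposition --- you locate the entire real/ghost dichotomy in Condition (v), whereas Condition (v) only kills the high-index case. The missing step is the paper's separate one-line observation: if $\ind(o)=1$ and $o$ contains both real and ghost letters, then $o^2$ contains a ghost-to-real transition $e_1^*f_1$ with $s(e)=s(f)$, which is a forbidden word, contradicting that $o^2$ is a nod-path.

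A second, smaller omission: if the letter of index $\geq 2$ occurs in $q$ only as a ghost edge $e_i^*$, then no cyclic shift of $q$ begins with the real letter $e_i$; one must also pass to the reversed quasicycle $q^*$, which is exactly what the paper's citation of \cite[Remark 16(b),(c)]{preusser} supplies (shifts and reversal). Relatedly, the paper applies Lemma \ref{lem3.-1} to the shifted quasicycle $q$ itself rather than to $q^2$, which makes the match with Condition (v) immediate (closedness of $q$ gives $s(p_1)=s(q_n)$) and avoids the boundary bookkeeping across the two copies of $q$ that you flag as the delicate part. With these repairs your argument becomes essentially the paper's proof.
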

\begin{proof}
Let $o=x_1\dots x_n$ be a quasicycle. First assume that $\ind(o)>1$. By \cite[Remark 16(b),(c)]{preusser} we may assume that $x_1=e_i$ for some $e\in E^1_w$ and $2\leq i\leq w(e)$. By Lemma \ref{lem3.-1}, $o=p_1q_1^*\dots p_nq_n^*$ or $o=p_1q_1^*\dots p_{n-1}q_{n-1}^*p_n$ where $n\geq 1$, $p_1,\dots,p_n$ are super-special paths in $\hat E$ and $q_1,\dots,q_n$ are unweighted paths in $\hat E$. But this contradicts Condition (v) in Definition \ref{defwb} (note that paths in $\hat E$ lift to paths in $(E,w)$). Hence $\ind(o)=1$. If $o$ contained both real and ghost edges, then $o^2$ would have a forbidden subword of the form $e_1^*f_1$ where $e,f\in E^1$ which is not possible since $o$ is a quasicycle. Thus $o=c$ or $o=c^*$ where $c$ is a cycle in $\hat E$. 
\end{proof}

$(E,w)$ is called {\it acyclic} if there is no cycle in $(E,w)$ and {\it aquasicyclic} if there is no quasicycle. In \cite{preusser} it was shown that $L_K(E,w)$ is finite-dimensional iff $(E,w)$ is finite and aquasicyclic. Below we give another characterisation of the finite-dimensional wLpas which is due to P. N. Tan. 
\begin{lemma}\label{lemfd}
$L_K(E,w)$ is finite-dimensional iff $(E,w)$ is finite, acyclic and its weighted part is well-behaved.
\end{lemma}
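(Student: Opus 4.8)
The plan is to reduce to the known characterisation of finite-dimensionality and then establish a purely combinatorial equivalence. Recall from \cite{preusser} that $L_K(E,w)$ is finite-dimensional iff $(E,w)$ is finite and aquasicyclic; the notion of quasicycle makes sense precisely because the nod-paths form a basis (Theorem \ref{thmhp}). It therefore suffices to prove that, for a finite weighted graph, $(E,w)$ is aquasicyclic iff it is acyclic and its weighted part is well-behaved. The single most useful auxiliary fact, which I would isolate first, is that \emph{a nod-path of minimal length among all nod$^{2}$-paths is automatically a quasicycle}: every subword of its square is again a nod-path (subwords of nod-paths are nod-paths), and by minimality none shorter than it can be a nod$^{2}$-path. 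Thus throughout it is enough to manufacture \emph{some} nod$^{2}$-path in order to obtain a quasicycle.

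For the direction asserting that acyclic together with well-behaved implies aquasicyclic, I would invoke Corollary \ref{cor3.0} directly: since the weighted part is well-behaved, the quasicycles are exactly the words $c$ and $c^{*}$ with $c$ a cycle in $\hat E$. A cycle in $\hat E$ projects to a cycle in $(E,w)$ (the graph $\hat E$ has the same vertex set as $E$ and joins two vertices only when $E$ does), so acyclicity of $(E,w)$ forces $\hat E$ to be acyclic; hence there are no cycles in $\hat E$ and consequently no quasicycles.

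For the converse I would argue by contraposition in two stages. \emph{Acyclicity}: if $(E,w)$ contains a cycle, lift it to $\hat E$ by assigning index $1$ to every edge; the resulting closed path uses only real edges, and a word in $\hat E_d$ built solely from real edges contains no forbidden subword, as both forbidden patterns $e^v_i(e^v_j)^{*}$ and $e^*_1f_1$ involve a ghost edge. Hence this word and its square are nod-paths, giving a nod$^{2}$-path and so a quasicycle. \emph{Well-behavedness}: now assuming $(E,w)$ acyclic, I would show that the failure of any of Conditions (i)--(v) of Definition \ref{defwb} produces a nod$^{2}$-path of index $>1$. The governing phenomenon is that, even in an acyclic graph, a non-special weighted edge or a suitable branching lets one close up a nod-path by alternating super-special real segments with unweighted ghost segments. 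For instance, if Condition (i) fails then some $v\in E_w^0$ emits a weighted edge $f\neq e^{v}$; the closed path $f_2 f_1^{*}$ avoids the pattern $e^v_i(e^v_j)^{*}$ because $f$ is not special, and its square introduces only the harmless junction $f_1^{*}f_2$ (here $f_2$ has index $2\neq 1$), so $f_2 f_1^{*}$ is a nod$^{2}$-path. At the opposite extreme, the negation of Condition (v) supplies paths $p_1,\dots,p_n,q_1,\dots,q_n$ which I would lift so that each $p_i$ becomes super-special (raising the index of its weighted first letter to at least $2$) and each $q_i$ becomes unweighted; the word $p_1q_1^{*}\cdots p_nq_n^{*}$ is then closed, and the hypotheses $r(p_i)=r(q_i)$, the distinctness of the last letters of $p_i$ and $q_i$, and the prescribed first letters are exactly what rules out forbidden subwords at the block junctions of both the word and its square. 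This is essentially the decomposition of Lemma \ref{lem3.-1} run in reverse.

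The hard part will be the case analysis for the failures of Conditions (ii)--(iv), one case per forbidden picture in Definition \ref{defwb}: in each instance I must assemble a concrete closed nod-path, combining the index-raising afforded by a weighted edge with the branching or reachability that the violated condition guarantees, and then verify that neither the word nor its square contains a forbidden subword. These verifications are routine individually but constitute the principal technical obstacle, since the correct lift must be chosen so that every internal junction and the wrap-around junction simultaneously dodge both forbidden patterns.
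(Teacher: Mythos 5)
Your proposal is correct and takes essentially the same route as the paper: the backward direction is exactly the paper's (Corollary \ref{cor3.0} plus the finiteness/aquasicyclicity criterion of \cite{preusser}), and the forward direction likewise manufactures nod$^2$-paths from a cycle or from the failure of each of Conditions (i)--(v) of Definition \ref{defwb}, with the detailed case analysis deferred just as the paper defers it to \cite{preusser}. The only cosmetic difference is that the paper turns a nod$^2$-path into infinite dimension directly via Theorem \ref{thmhp} (its powers are linearly independent nod-paths), whereas you pass through your correct observation that a minimal-length nod$^2$-path is a quasicycle and then invoke the aquasicyclicity criterion.
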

\begin{proof}
($\Rightarrow$) Suppose that $L_K(E,w)$ is finite-dimensional. It follows from Theorem \ref{thmhp} that $(E,w)$ is finite and acyclic. One checks easily that if one of the Conditions (i)-(v) in Definition \ref{defwb} were not satisfied, then there would be a nod$^2$-path (compare \cite[Proof of Lemma 33]{preusser}) and hence $L_K(E,w)$ would have infinite dimension, again by Theorem \ref{thmhp}. Hence the weighted part of $(E,w)$ is well-behaved.\\
($\Leftarrow$) Suppose that $(E,w)$ is finite and acyclic and its weighted part is well-behaved. Then $\hat E$ is acyclic since a cycle in $\hat E$ would lift to a cycle in $(E,w)$. It follows from Corollary \ref{cor3.0} that there is no quasicycle. Thus, by \cite[Theorem 46]{preusser}, $L_K(E,w)$ is finite-dimensional.
\end{proof}

Theorem \ref{thmfd} below follows from the lemma above and the fact that a finitely generated $K$-algebra $A$ is finite-dimensional as a $K$-vector space iff $\GKdim A=0$.
\begin{theorem}\label{thmfd}
The following are equivalent:
\begin{enumerate}[(i)]
\item $L_K(E,w)$ is finite-dimensional.
\item $(E,w)$ is finite, acyclic and the weighted part of $(E,w)$ is well-behaved.
\item $(E,w)$ is finite and $\GKdim L_K(E,w)=0$.
\end{enumerate}
\end{theorem}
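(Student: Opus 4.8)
The plan is to derive the theorem directly from Lemma \ref{lemfd} together with the standard Gelfand--Kirillov characterisation of finite-dimensionality, exactly as announced in the paragraph preceding the statement. The equivalence (i) $\Leftrightarrow$ (ii) requires no new argument, since it is precisely the content of Lemma \ref{lemfd}. Thus the whole task reduces to closing the cycle by establishing (i) $\Leftrightarrow$ (iii), after which the full chain (i) $\Leftrightarrow$ (ii) $\Leftrightarrow$ (iii) follows formally.

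For the implication (i) $\Rightarrow$ (iii), I would first invoke Lemma \ref{lemfd} to conclude that $(E,w)$ is finite whenever $L_K(E,w)$ is finite-dimensional (Lemma \ref{lemfd} in fact gives more, but only finiteness of the graph is needed here). The vanishing $\GKdim L_K(E,w)=0$ then follows from the elementary fact that every finite-dimensional $K$-algebra has Gelfand--Kirillov dimension $0$; this direction is valid for an arbitrary algebra and needs no finiteness hypothesis on $(E,w)$.

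For the converse (iii) $\Rightarrow$ (i), the key observation is that finiteness of $(E,w)$ forces $L_K(E,w)$ to be a \emph{finitely generated} $K$-algebra. Indeed, since each weight $w(e)$ is a positive integer and $E^1$ is finite, the associated graph $\hat E$ (and hence $\hat E_d$) is finite, so $L_K(E,w)$ is generated by the finitely many vertices together with the finitely many real and ghost edges $e_i,e_i^*$. With finite generation established, the standard fact that a finitely generated $K$-algebra with $\GKdim =0$ is finite-dimensional applies and yields (i).

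The only delicate point, and the step I would flag as the main obstacle, is the asymmetry in the Gelfand--Kirillov characterisation: the implication $\GKdim A = 0 \Rightarrow \dim_K A < \infty$ holds only under the finite-generation hypothesis (an infinite-dimensional algebra that is a directed union of finite-dimensional subalgebras can still have $\GKdim = 0$). This is exactly why condition (iii) must carry the explicit clause ``$(E,w)$ is finite''; dropping it would break the equivalence. I would therefore take care to perform the reduction to the finitely generated case before invoking the $\GKdim$ criterion in the direction (iii) $\Rightarrow$ (i), and to extract finiteness of the graph from Lemma \ref{lemfd} rather than assume it in the direction (i) $\Rightarrow$ (iii).
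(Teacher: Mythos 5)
Your proposal is correct and follows essentially the same route as the paper, which proves the theorem in one line by combining Lemma \ref{lemfd} (giving (i) $\Leftrightarrow$ (ii)) with the fact that a finitely generated $K$-algebra is finite-dimensional iff its Gelfand--Kirillov dimension is $0$. Your additional care in verifying finite generation of $L_K(E,w)$ from finiteness of $(E,w)$, and in noting that the $\GKdim = 0 \Rightarrow$ finite-dimensional direction needs that hypothesis, simply makes explicit the details the paper leaves to the reader.
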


The next goal is to prove the following theorem.
\begin{theorem}\label{thmm1}
The following are equivalent:
\begin{enumerate}[(i)]
\item $L_K(E,w)$ is locally finite.
\item $(E,w)$ is finite, no cycle has an exit and the weighted part of $(E,w)$ is well-behaved.
\item $(E,w)$ is finite and $\GKdim L_K(E,w)\leq 1$.
\end{enumerate}
\end{theorem}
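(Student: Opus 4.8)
The plan is to establish the two equivalences (i)$\Leftrightarrow$(ii) and (ii)$\Leftrightarrow$(iii), using the combinatorial condition (ii) as the hub and working throughout with the nod-path basis of Theorem \ref{thmhp} and the standard $\mathbb{Z}^n$-grading of Remark \ref{grading}, under which a nod-path $p=x_1\dots x_m$ is homogeneous with degree the vector whose $i$-th coordinate counts the real letters of index $i$ minus the ghost letters of index $i$ in $p$. The first, easy, reduction is that local finiteness forces $(E,w)$ to be finite: the vertices are pairwise distinct nod-paths of length $0$, hence linearly independent by Theorem \ref{thmhp}, and they all lie in the degree-$0$ component, so $\dim_K A_0<\infty$ forces $|E^0|<\infty$; since $(E,w)$ is row-finite, $E^1=\bigcup_{v\in E^0}s^{-1}(v)$ is then finite too. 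Thus in every implication I may assume $(E,w)$ finite, so that for each length there are only finitely many nod-paths.

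For (ii)$\Leftrightarrow$(iii) I pass through the quasicycle picture. When the weighted part is well-behaved, Corollary \ref{cor3.0} identifies the quasicycles with the cycles of $\hat E$ and their ghost duals, and Condition (iv) of Definition \ref{defwb} already forces every cycle of $(E,w)$ to be unweighted (a weighted edge on a cycle would re-base it at a vertex of $T(r(E^1_w))$), hence to lift uniquely to a cycle of $\hat E$. If in addition no cycle has an exit, these $\hat E$-cycles are exit-free and pairwise unconnected, so the determination of $\GKdim L_K(E,w)$ in \cite{preusser}, which reads the dimension off the connectivity of the quasicycles, gives $\GKdim L_K(E,w)\le 1$. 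Conversely, if (ii) fails then either the weighted part is not well-behaved, producing (as in the proof of Lemma \ref{lemfd}, following \cite[Proof of Lemma 33]{preusser}) a quasicycle of index $>1$, or the weighted part is well-behaved but some (unweighted) cycle has an exit; in either case the formula of \cite{preusser} charges $\GKdim L_K(E,w)\ge 2$ (a quasicycle with an exit, or two connected quasicycles, forces quadratic growth). Together with Theorem \ref{thmfd}, which settles the boundary case $\GKdim=0$, this yields (ii)$\Leftrightarrow$(iii).

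For (ii)$\Rightarrow$(i) I fix $g\in\mathbb{Z}^n$ and bound the length of a nod-path of degree $g$. Lemma \ref{lem3.-1} shows that as soon as a weighted letter of index $\ge 2$ appears, the surrounding real and ghost blocks are forced into super-special/unweighted form rigidly tied to the weighted part, which (exactly as in the finite-dimensional analysis behind Theorem \ref{thmfd}, using Condition (iv)) contributes only bounded length. In the unweighted part, the no-exit hypothesis makes cycles rigid: each vertex of a cycle emits a single edge, so the Cuntz–Krieger relations forbid the mixing of $c$ and $c^*$, and a nod-path can traverse an exit-free cycle $c$ only as a pure power $c^{k}$ or $(c^{*})^{k}$, contributing $\pm k\deg c$ with $\deg c\neq 0$. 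Fixing $g$ therefore caps every such $k$ and hence the total length, so each $A_g$ is finite-dimensional and $L_K(E,w)$ is locally finite.

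It remains to prove $\neg$(ii)$\Rightarrow\neg$(i), i.e. to exhibit an infinite-dimensional graded component when $(E,w)$ is finite but fails (ii). If the weighted part is well-behaved (so cycles are unweighted by Condition (iv)) but a cycle $c$ has an exit, the standard Leavitt-path-algebra construction gives the reduced monomials $(c^{a}\rho)(c^{b}\rho)^{*}$ along a fixed exit path $\rho$, of degree $(a-b)\deg c$; the slice $a=b$ is an infinite family of degree-$0$ nod-paths. If instead the weighted part is not well-behaved, one of Conditions (i)–(v) of Definition \ref{defwb} fails and yields a forbidden constellation; for example, a vertex emitting two distinct weighted edges produces a weighted edge $f$ that is not the chosen special edge, and then $(f_2f_2^{*})^{k}$ ($k\ge 1$) is an infinite degree-$0$ family of nod-paths. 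The main obstacle is to carry this out uniformly over all five constellations: a raw nod$^2$-path $o$ arising from a failure has $\deg o\neq 0$ in general, so its powers $o^{k}$ scatter across the distinct degrees $k\deg o$ and do not populate a single component. One must instead convert the branching built into each constellation — most delicately the ladder of Condition (v), whose nod$^2$-path necessarily carries an unbalanced mixture of index-$1$ and index-$\ge 2$ letters — into a genuinely two-parameter family of reduced monomials along which the degree map is affine with nontrivial kernel, so that an entire line of parameters lands in one degree. Verifying that the resulting words are honest, nonzero, degree-homogeneous nod-paths (drawing on the explicit constructions of \cite{preusser}) is the technical heart of the argument; once it is in place, the displayed implications combine to give the theorem.
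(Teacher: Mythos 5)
Your overall architecture (finiteness first, then (ii) as the hub) is reasonable, and several pieces are sound, but the proposal has a genuine gap that you yourself flag and do not close: the implication $\neg$(ii)$\Rightarrow\neg$(i). This is not a removable technicality; it is where the paper's key device lives. The missing idea is a normalization-plus-doubling trick (the paper's Lemma \ref{lem3.1}): a failure of any of Conditions (i)--(v) of Definition \ref{defwb}, or of the no-exit condition, yields a nod$^2$-path $o$ containing a weighted letter of index $\geq 2$ (Corollaries \ref{cor3.1}, \ref{cor3.2}); using the rotation and star symmetries of \cite[Remark 16(b),(c)]{preusser} one may assume the \emph{last} letter of $o$ is $e_j^*$ with $e\in E^1_w$, $j\geq 2$ (or, in the exit case, a real letter of a non-special edge). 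Then for every $k\geq 1$ the word $o^k(o^*)^k$ is a nod-path: the only new junction is $e_j^*e_j$, which is not forbidden precisely because $j\geq 2$, while $(o^*)^k=(o^k)^*$ is a nod-path by the star-reversal symmetry of the forbidden words. All these words lie in degree $0$, so Theorem \ref{thmhp} makes the $0$-component infinite-dimensional. This is exactly the ``two-parameter family whose degree map has nontrivial kernel'' you were searching for --- your examples $(f_2f_2^*)^k$ and $(c^a\rho)(c^b\rho)^*$ are special cases of it --- and it works uniformly over all five constellations, including the ladder of Condition (v), so no case-by-case construction is needed.

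Two further steps are asserted where real work is required. First, both your (ii)$\Rightarrow$(iii) and your (ii)$\Rightarrow$(i) rest on the claim that exit-free cycles are ``rigid'' and ``pairwise unconnected.'' But the no-exit hypothesis only blocks leaving a cycle along \emph{real} edges: a nod-path can leave $c^k$ through ghost edges, backing out along an entrance of $c$, and potentially reach another cycle $d$ (or $c^*$, or $c$ again). Ruling this out is the content of the paper's Lemma \ref{lem3.2}(ii) and the two-case analysis in Corollary \ref{cor3.3}, which rely on Lemma \ref{lem3.-1} and Conditions (iv)--(v); without it your ``fixing $g$ caps every $k$'' argument collapses, since a word of the shape $c^a\cdots(d^*)^b$ with $\deg c=\deg d$ would place infinitely many nod-paths in a single degree. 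Second, your $\neg$(ii)$\Rightarrow\neg$(iii) asserts that a quasicycle of index $>1$ forces $\GKdim\geq 2$ ``by the formula of \cite{preusser}''; this is true but not formal. One must show such a quasicycle $q$ heads a chain of length $2$: one has $q\Longrightarrow q^*$, then needs the dichotomy $q\approx q^*$ or not, the fact that $qq^*$ a nod-path together with $q\approx q^*$ forces $q=q^*$ (Lemma \ref{lem3.25}), and finally the construction of a second quasicycle $q'$ with $q\Longrightarrow(q')^*$ and $q\not\approx(q')^*$ --- this is the paper's Lemma \ref{lem3.3}. Note that the paper avoids ever proving (iii)$\Rightarrow$(ii) or its contrapositive: it establishes the single cycle (i)$\Rightarrow$(ii)$\Rightarrow$(iii)$\Rightarrow$(i), which needs only three implications, whereas your hub-and-spoke plan needs four and thereby forces you into precisely the two directions that are hardest to justify.
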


We proceed by proving some results needed to prove Theorem \ref{thmm1}.
\begin{lemma}\label{lem3.0}
Suppose that $L_K(E,w)$ is locally finite. Then $(E,w)$ is finite. 
\end{lemma}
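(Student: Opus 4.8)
The plan is to exploit the homogeneous basis of nod-paths furnished by Theorem \ref{thmhp} together with the standard $\mathbb Z^n$-grading of Remark \ref{grading}. Writing $L_K(E,w)=\bigoplus_g A_g$, I will bound the cardinalities $|E^0|$ and $|E^1|$ by the dimensions of two carefully chosen graded components, each of which is finite by the local finiteness hypothesis. The whole argument is thus a dimension-counting argument inside single graded pieces.

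First I would treat the vertices. Every $v\in E^0$ is a trivial path in $\hat E_d$, hence a d-path; since it has no subword of length $\geq 2$ it cannot contain a forbidden word, so it is automatically a nod-path, and distinct vertices are distinct nod-paths. By Theorem \ref{thmhp} they yield linearly independent elements of $L_K(E,w)$, all of degree $0$ by Remark \ref{grading}. Hence $|E^0|\leq \dim_K A_0<\infty$, so $E^0$ is finite.

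Next I would treat the edges. If $E^1=\emptyset$ there is nothing more to prove, so assume $E^1\neq\emptyset$ and set $n=\max\{w(e)\mid e\in E^1\}\geq 1$. For each $e\in E^1$ the symbol $e_1$ lies in $\hat E^1$ (because $w(e)\geq 1$), so $e_1$ is a length-one d-path; being of length one it cannot contain a forbidden subword, hence it is a nod-path. Distinct edges $e\neq f$ give distinct symbols $e_1\neq f_1$, hence distinct nod-paths, and by Theorem \ref{thmhp} the corresponding elements of $L_K(E,w)$ are linearly independent. Each has degree $\epsilon_1$, so $|E^1|\leq\dim_K A_{\epsilon_1}<\infty$ and $E^1$ is finite. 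Combined with the previous paragraph this shows that $(E,w)$ is finite.

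There is no serious obstacle here; the two points that require care are verifying that the chosen d-paths really are nod-paths and that each chosen family lies inside a single graded component. The first is immediate because the forbidden words $e^v_i(e^v_j)^*$ and $e^*_1 f_1$ all have length $2$, so no d-path of length $\leq 1$ can contain one. The second is read off directly from Remark \ref{grading}, which places all vertices in $A_0$ and all real edges $e_1$ in $A_{\epsilon_1}$, so that finiteness of these components bounds exactly the cardinalities in question.
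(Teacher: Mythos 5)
Your proof is correct, and its first half is exactly the paper's proof: the paper argues that if $(E,w)$ were not finite then $|E^0|=\infty$, and since $E^0\subseteq L_K(E,w)_0$ is linearly independent by Theorem \ref{thmhp}, this contradicts $\dim_K L_K(E,w)_0<\infty$. The difference is in how the edges are handled. The paper does not argue about edges at all: it invokes the standing assumption (Definition \ref{defsg}) that all weighted graphs in the article are row-finite, so finiteness of $E^0$ automatically forces finiteness of $E^1=\bigcup_{v\in E^0}s^{-1}(v)$; this is why ``not finite'' immediately gives $|E^0|=\infty$ in the paper's one-line proof. You instead bound $|E^1|$ directly by observing that the symbols $e_1$, $e\in E^1$, are distinct nod-paths all lying in the single component $A_{\epsilon_1}$, so $|E^1|\leq\dim_K A_{\epsilon_1}<\infty$. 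This costs an extra paragraph but buys independence from the row-finiteness hypothesis, so your argument would survive in a setting of non-row-finite graphs where the paper's shortcut fails. One small blemish: you set $n=\max\{w(e)\mid e\in E^1\}$, which need not exist if $E^1$ is infinite with unbounded weights; but this is harmless, since you never actually use $n$ (only the degree $\epsilon_1$), and the same technicality is already implicit in the paper's Remark \ref{grading}, whose grading group presupposes that this maximum exists.
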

\begin{proof}
Assume that $(E,w)$ is not finite. Then $|E^0|=\infty$. But $E^0\subseteq L_K(E,w)_0$ and $E^0$ is a linearly independent set by Theorem \ref{thmhp} which contradicts the assumption that $(E,w)$ is locally finite.
\end{proof}
\begin{lemma}\label{lem3.1}
Suppose that $L_K(E,w)$ is locally finite. Then $\{p,p^*\mid p \text{ is a super-special, unweighted, closed}$ $\text{path in }\hat E\}$ is the set of all nod$^2$-paths.
\end{lemma}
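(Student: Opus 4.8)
The statement is an equality of two sets of nod-paths, so I would prove the two inclusions separately; the content lies in ``$\subseteq$''. For ``$\supseteq$'', let $p$ be a super-special, unweighted, closed path in $\hat E$. Then $p$ uses only real edges of index $1$ and $p^*$ only ghost edges, while every forbidden word contains both a real and a ghost edge; hence neither $p$, $p^*$, nor their squares (which are again closed real, resp.\ ghost, paths since $p$ is closed) can contain a forbidden subword, so $p$ and $p^*$ are nod$^2$-paths. This direction does not even use super-specialness; it is the reverse inclusion that must force it.

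For ``$\subseteq$'', let $o$ be a nod$^2$-path; $(E,w)$ is finite by Lemma \ref{lem3.0}. I would first reduce the index. If $\ind(o)=1$, then $o$ cannot mix real and ghost edges: since a nod-path admits no ghost-to-real transition $g_1^*f_1$, such an $o$ would be of the form $\alpha\beta^*$ with $\alpha$ real and $\beta^*$ ghost, and then the cyclic junction of $o^2$ would be exactly a forbidden word $g_1^*f_1$ (using $s(\alpha)=s(\beta)$, as $o$ is closed), contradicting that $o^2$ is a nod-path. So an index-$1$ nod$^2$-path is all-real or all-ghost; by the $*$-symmetry of the defining relations it suffices to treat the all-real case, where $o=e^{(1)}_1\cdots e^{(m)}_1$ is a closed real path.

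Local finiteness enters through degree families: for each forbidden feature I would exhibit infinitely many pairwise distinct nod-paths lying in a single $\mathbb Z^n$-degree. Every edge of a closed path lies on a cycle of $(E,w)$. If such a cycle carried a weighted edge $e$, I pick a cycle $c$ through $e$, replace $e$ by its index-$2$ copy to obtain a real closed path $c_2$, and observe that $\{(c_2^*)^a(c_2)^a:a\ge1\}$ consists of pairwise distinct nod-paths of degree $0$ (the central junction is $e_2^*e_2$, of index $2$, and all remaining junctions are real-real or ghost-ghost), contradicting local finiteness; hence $o$ is unweighted. If instead a cycle through $o$ had an exit at a vertex $u$, then, after choosing the special edges so that the cycle edge leaving $u$ is non-special (legitimate, since local finiteness of $L_K(E,w)$ is independent of that choice) and basing $c$ so that this edge sits at the junction, the family $\{c^a(c^*)^a:a\ge1\}$ is again an infinite degree-$0$ family. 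Thus no cycle through $o$ has an exit, every traversed vertex emits a single edge, and $o$ is super-special, as required.

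The remaining, and hardest, point is to rule out a nod$^2$-path $o$ with $\ind(o)\ge2$. If $o$ is all-real it contains a weighted edge on a cycle and is excluded by the construction above, so $o$ must have a ghost edge. I would first extract Conditions (i)--(iv) of Definition \ref{defwb} directly from local finiteness---each violation yields an explicit degree-$0$ family, e.g.\ a second weighted edge $f$ at a vertex gives the degree-$0$ nod$^2$-path $f_2f_2^*$ and its powers---so that Lemma \ref{lem3.-1} applies after cyclically rotating $o$ (every rotation of a nod$^2$-path is again one, cf.\ \cite[Remark 16]{preusser}) to begin with $e_i$ for a weighted $e$ and $i\ge2$. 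This decomposes $o=p_1q_1^*\cdots p_nq_n^*$ with the $p_j$ super-special and the $q_j$ unweighted, and the closedness and junction constraints then reproduce precisely the configuration of Condition (v). The crux is to convert this configuration into a genuine degree-$0$ infinity---pumping along the weighted first letters of the $p_j$ exactly as in the $(c_2^*)^a(c_2)^a$ construction---thereby contradicting local finiteness. I expect this extraction of the full well-behavedness of the weighted part from local finiteness, rather than assuming it, to be the main technical obstacle, paralleling the quasicycle analysis of \cite[Lemma 33]{preusser} but carrying the extra degree bookkeeping that local finiteness demands.
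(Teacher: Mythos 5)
Your $\supseteq$ direction and your treatment of the case $\ind(o)=1$ are correct: the real/ghost separation via the seam of $o^2$, the cycle-plus-$c_2$ pumping for unweightedness, and the exit argument with a re-chosen special edge are all sound, and re-choosing special edges is legitimate for exactly the reason you give (the grading and the algebra do not depend on that choice, while Theorem \ref{thmhp} holds for every choice). The genuine gap is the case $\ind(o)\geq 2$ with both real and ghost edges present: there you give not a proof but a plan --- establish Conditions (i)--(iv) of Definition \ref{defwb} from local finiteness (you only carry out (i); the degree-$0$ families for (ii)--(iv) are never exhibited), apply Lemma \ref{lem3.-1}, and then ``convert the Condition (v) configuration into a degree-$0$ infinity'', which you explicitly defer as ``the main technical obstacle''. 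As written, the hardest case of the lemma is left open.

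What closes the gap --- and what the paper actually does --- is to notice that your own pumping construction applies to $o$ itself, with no decomposition and no well-behavedness of the weighted part whatsoever. By \cite[Remark 16(b),(c)]{preusser} (which, as the paper notes, remains valid for nod$^2$-paths) you may rotate and star $o$ so that its last letter is a ghost copy $(e^{(n)}_{j_n})^*$ of a weighted edge; if $j_n=1$, replace it by $(e^{(n)}_2)^*$, which keeps $o$ a nod$^2$-path because forbidden words of type (i) ignore indices and those of type (ii) require index $1$ on both letters. Then for every $k\geq 1$ the word $o^k(o^*)^k$ is a nod-path of homogeneous degree $0$: every junction lies inside $o^k$ or $(o^*)^k$ except the central one, $(e^{(n)}_{j_n})^*e^{(n)}_{j_n}$, which is not forbidden since $j_n\geq 2$. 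This infinite family in $L_K(E,w)_0$ contradicts local finiteness by Theorem \ref{thmhp}, and it eliminates weighted edges from all nod$^2$-paths in one stroke --- it is literally your $(c_2^*)^a(c_2)^a$ trick, in the equivalent form $(o^*)^k o^k$ for $o$ rotated to start with $e_i$, $i\geq 2$. Once weighted edges are gone, the paper's remaining two steps are exactly your exit argument and your real/ghost separation, so all the ingredients are already in your write-up; the detour through Conditions (i)--(iv) and Lemma \ref{lem3.-1} is both unexecuted and unnecessary and should simply be deleted.
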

\begin{proof}
Let $o=x_1\dots x_n$ be a nod$^2$-path. Then there are uniquely determined $e^{(i)}\in E^1,j_i\in \{1,\dots,w(e^{(i)})\}$ $(1\leq i\leq n)$ such that $x_i=e^{(i)}_{j_i}$ or $x_i=(e^{(i)}_{j_i})^*$ for any $1\leq i\leq n$. Assume that there is an $i$ such that $e^{(i)}\in E^1_w$. By \cite[Remark 16(b),(c)]{preusser} we may assume that $i=n$ and $x_n=(e^{(n)}_{j_n})^*$ (note that the statements of \cite[Remark 16(b),(c)]{preusser} still are true if one replaces any appearance of ``quasi-cycle" by ``nod$^2$-path"). Further we may assume that $j_n>1$ (if $j_n=1$, replace $o$ by the nod$^2$-path $o':=x_1\dots x_{n-1}(e^{(n)}_{2})^*$). Clearly for any $k\geq 1$, $o^k(o^*)^k$ is a nod-path in the homogeneous $0$-component $L_K(E,w)_0$. By Theorem \ref{thmhp} this contradicts the assumption that $L_K(E,w)$ is locally finite. Hence $e^{(1)},\dots,e^{(n)}\in E^1_u$. \\
Assume now that $|s^{-1}(s(e^{(i)}))|>1$ for some $1\leq i\leq n$. By \cite[Remark 16(b),(c)]{preusser} we may assume that $i=n$ and $x_n=e^{(n)}_{j_n}$. Choose $e^{s(e^{(n)})}\neq e^{(n)}$. Then for any $k\geq 1$, $o^k(o^*)^k$ is a nod-path in the homogeneous $0$-component $L_K(E,w)_0$. By Theorem \ref{thmhp} this contradicts the assumption that $L_K(E,w)$ is locally finite. Hence $s^{-1}(s(e^{(i)}))=\{e^{(i)}\}$ for any $1\leq i\leq n$.\\
Since $e^{(1)},\dots,e^{(n)}\in E^1_u$, $o$ cannot contain both real edges and ghost edges (otherwise $o^2$ would not be a nod-path). Hence $o=p$ or $o=p^*$ for some super-special, unweighted, closed path in $\hat E$. 
\end{proof}

\begin{corollary}\label{cor3.1}
Suppose that $L_K(E,w)$ is locally finite. Then no cycle in $(E,w)$ has an exit.
\end{corollary}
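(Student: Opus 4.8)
The plan is to argue by contradiction, converting a cycle with an exit into a nod$^2$-path that violates the classification obtained in Lemma \ref{lem3.1}. So I would assume some cycle $c=x_1\dots x_n$ in $(E,w)$ has an exit $e$, meaning $s(e)=s(x_j)$ and $e\neq x_j$ for some $j\in\{1,\dots,n\}$, and aim to contradict local finiteness.

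First I would lift $c$ to $\hat E$ in the most naive way, taking the first sheet of each edge: set $\hat c:=(x_1)_1\dots(x_n)_1$. This is legitimate because every edge has weight at least $1$, so $(x_i)_1$ always exists, and since $c$ is a closed path (so $\hat r((x_i)_1)=r(x_i)=s(x_{i+1})=\hat s((x_{i+1})_1)$ and $\hat s((x_1)_1)=s(x_1)=r(x_n)=\hat r((x_n)_1)$), the word $\hat c$ is a genuine closed path in $\hat E$. The key observation is then that $\hat c$ is a nod$^2$-path: the word $\hat c$, and likewise $\hat c^2$, consists only of real edges, so neither forbidden pattern $e^v_i(e^v_j)^*$ nor $e^*_1 f_1$ can occur as a subword (both patterns require a ghost edge). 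Hence $\hat c$ and $\hat c^2$ are nod-paths, i.e.\ $\hat c$ is a nod$^2$-path.

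Now I would invoke Lemma \ref{lem3.1}. Since $L_K(E,w)$ is locally finite and $\hat c$ is a nod$^2$-path built purely from real edges, it must be of the form $p$ (rather than $p^*$) for a super-special, unweighted, closed path $p$ in $\hat E$; in particular $\hat c$ is itself super-special. Its index is $\max\{1,\dots,1\}=1$, so the super-special condition reads $1>\max\{w(f)\mid f\in s^{-1}(s(x_i)),\,f\neq x_i\}$ for every $i$. Because every edge has weight $\geq 1$, this forces $s^{-1}(s(x_i))=\{x_i\}$ for each $i$, i.e.\ every vertex on the cycle emits exactly one edge. In particular $s^{-1}(s(x_j))=\{x_j\}$, contradicting the existence of the exit $e\neq x_j$ with $s(e)=s(x_j)$. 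This contradiction shows that no cycle has an exit.

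The only genuinely delicate point — and the step I would treat most carefully — is reading off from the super-special condition at index $1$ that the relevant source vertex emits a \emph{unique} edge; the remaining ingredients (the naive lift and the ghost-edge observation that makes $\hat c$ a nod$^2$-path) are routine once the definitions of forbidden word, nod$^2$-path, and super-special path are unwound.
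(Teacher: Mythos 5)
Your proof is correct and takes essentially the same route as the paper: both lift the cycle $c$ to $\hat c=(x_1)_1\dots(x_n)_1$ in $\hat E$, note that a word of real edges (and its square) contains no forbidden subword so $\hat c$ is a nod$^2$-path, and then apply Lemma \ref{lem3.1} to conclude $s^{-1}(s(x_i))=\{x_i\}$ for every $i$, which rules out an exit. The only difference is presentational: the paper compresses your ``super-special at index $1$ forces a unique emitted edge'' step into the single phrase ``It follows from Lemma \ref{lem3.1}'', and states the argument directly rather than by contradiction.
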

\begin{proof}
Let $c=e^{(1)}\dots e^{(n)}$ be a cycle in $(E,w)$. Then $\hat c:=e_1^{(1)}\dots e_1^{(n)}$ is a cycle in $\hat E$. It follows from Lemma \ref{lem3.1} that $s^{-1}(s(e^{(i)}))=\{e^{(i)}\}$ for any $1\leq i\leq n$. Hence $c$ has no exit.
\end{proof}

\begin{corollary}\label{cor3.2}
Suppose that $L_K(E,w)$ is locally finite. Then the weighted part of $(E,w)$ is well-behaved. 
\end{corollary}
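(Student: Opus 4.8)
The plan is to prove the contrapositive of each of the five defining conditions, using Lemma \ref{lem3.1} as the whole engine. Since we are assuming $L_K(E,w)$ is locally finite, Lemma \ref{lem3.1} applies and tells us exactly what the nod$^2$-paths are: every nod$^2$-path is $p$ or $p^*$ for some super-special, unweighted, closed path $p$ in $\hat E$. The decisive consequence I would extract is that \emph{no nod$^2$-path contains a weighted edge} (an unweighted super-special path forces every letter to have index $1$ and every vertex it meets to emit a single edge). Because ``well-behaved'' is by Definition \ref{defwb} precisely the conjunction of Conditions (i)--(v), it therefore suffices to show that if any one of (i)--(v) fails, one can produce a nod$^2$-path carrying a weighted edge, in flat contradiction with Lemma \ref{lem3.1}.

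Concretely, I would run through the five forbidden constellations one at a time and, assuming the constellation is present, assemble an explicit closed d-path $o$ around it. Condition (v) is the cleanest: the hypothesised paths $p_1,\dots,p_n,q_1,\dots,q_n$ glue directly into $o=p_1q_1^*\cdots p_nq_n^*$ (the gluing is possible exactly because $r(p_i)=r(q_i)$, $s(p_i)=s(q_{i-1})$ and $s(p_1)=s(q_n)$), and since each $p_i$ begins with a weighted edge, $o$ visibly contains a weighted letter. For (i) and (iii) the two weighted edges enter $o$ as index-$\geq 2$ letters directly. For (ii) and (iv) the offending vertex lies in $T(r(E^1_w))$, so I would fix a weighted edge $g$ and a path $\hat\pi$ in $\hat E$ from $r(g)$ to that vertex, and wrap the branch (for (ii)) or the cycle (for (iv)) as $o=g_2\,\hat\pi\,(\cdots)\,\hat\pi^*\,g_2^*$; here the branch or cycle is exactly what lets the turn-around avoid a forbidden subword, while the letter $g_2$ forces a weighted edge into $o$. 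It is worth stressing why (iv) must couple the cycle with reachability from a weighted edge: a bare exit-free cycle is itself an \emph{admissible} (super-special unweighted) nod$^2$-path, so only the presence of $g_2$ turns $o$ into a forbidden one.

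The main obstacle, and the only genuinely technical part, is verifying in each case that the candidate $o$ really is a nod$^2$-path, i.e.\ that neither $o$ nor $o^2$ contains a forbidden subword of the form $e^v_i(e^v_j)^*$ or $e_1^*f_1$; the delicate points are the turn-arounds, which must be routed through non-special edges (this is precisely what Conditions (i)--(iv) guarantee is possible when they fail). These subword verifications are identical to those already carried out in the ($\Rightarrow$) direction of Lemma \ref{lemfd} and in \cite[Proof of Lemma 33]{preusser}, where the very same constellations were shown to spawn nod$^2$-paths. I would therefore invoke those constructions rather than redo the bookkeeping, and simply add the one new observation that each resulting nod$^2$-path carries a weighted edge. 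Each such path then contradicts Lemma \ref{lem3.1}, so all of (i)--(v) hold and the weighted part of $(E,w)$ is well-behaved.
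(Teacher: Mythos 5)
Your proposal is correct and takes essentially the same route as the paper: the paper's own proof of Corollary \ref{cor3.2} likewise notes that if any of Conditions (i)--(v) in Definition \ref{defwb} fails, then there is a nod$^2$-path starting with $e_j$ for some $e\in E_w^1$ and $2\leq j\leq w(e)$ (the detailed constructions being deferred, as you defer them, to the constellation-by-constellation arguments behind Lemma \ref{lemfd} and \cite[Proof of Lemma 33]{preusser}), and this contradicts Lemma \ref{lem3.1} because every nod$^2$-path is, by that lemma, unweighted. Your extracted consequence (``no nod$^2$-path contains a weighted edge'') and your remark that in case (iv) the weighted edge $g_2$ is what creates the contradiction are just slightly more explicit versions of the paper's one-line argument.
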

\begin{proof}
One checks easily that if one of the Conditions (i)-(v) in Definition \ref{defwb} does not hold, then there is a nod$^2$-path starting with $e_j$ for some $e\in E_w^1$ and $2\leq j\leq w(e)$. But that contradicts Lemma \ref{lem3.1}.
\end{proof}

Let $E'$ denote the set of all real and ghost edges which do not appear in a quasicycle. Let $P'$ denote the set of all nod-paths which are composed from elements of $E'$.
\begin{lemma}\label{lem3.2}
Suppose that $(E,w)$ is finite, no cycle has an exit and the weighted part of $(E,w)$ is well-behaved. Let $c$ and $d$ be cycles in $\hat E$ based at vertices $u$ resp. $v$. Then the following hold.
\begin{enumerate}[(i)]
\item If $u=v$ than $c=d$.
\item There is no $p\in P'$ such that $\hat s_d(p)=u$ and $\hat r_d(p)=v$. 
\end{enumerate}
\end{lemma}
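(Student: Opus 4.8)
The plan is to extract from the hypotheses a rigid description of the cycles of $\hat E$, read off (i) at once, and then prove (ii) by contradiction, showing that a $P'$-path joining two cycle vertices would be forced to run through the weighted part in a way ruled out by Condition (v) of Definition \ref{defwb}. I would first analyse an arbitrary cycle $c=e^{(1)}_{k_1}\cdots e^{(n)}_{k_n}$ of $\hat E$. Its projection $\bar c=e^{(1)}\cdots e^{(n)}$ is a cycle of $(E,w)$, and since no cycle has an exit we get $s^{-1}(s(e^{(i)}))=\{e^{(i)}\}$ for every $i$; in particular each vertex of $\bar c$ emits a single edge. If some $e^{(i)}$ were weighted, then a rotation of $\bar c$ would be a cycle based at $r(e^{(i)})\in T(r(E^1_w))$, contradicting Condition (iv); hence every $e^{(i)}$ is unweighted, so $w(e^{(i)})=1$ and $k_i=1$, and every cycle of $\hat E$ reads $e^{(1)}_1\cdots e^{(n)}_1$. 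Part (i) now follows immediately: if $c,d$ are based at $u=v$, the unique edge emitted by $u$, then by the next vertex, and so on, determines the same vertex-and-edge sequence for both, and since each returns to $u$ we get $c=d$.

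For (ii) I would assume, for contradiction, a nontrivial $p=x_1\cdots x_N\in P'$ with $\hat s_d(p)=u$ and $\hat r_d(p)=v$ (a trivial path would give $u=v$ and fall under (i)). Two endpoint observations come from the fact that a cycle vertex emits exactly one edge, unweighted and lying on a quasicycle. The only real edge leaving $u$ is the cycle edge, which is not in $E'$, so $x_1$ must be a ghost edge; dually the only ghost edge with range $v$ is the ghost of the cycle edge at $v$, again not in $E'$, so $x_N$ must be a real edge. Condition (iv) then fixes the indices at the ends: were $x_1$ a weighted ghost $g_{i_1}^*$ with $i_1\geq 2$, then $r(g)=u$ would place the cycle $c$ at a vertex of $T(r(E^1_w))$; similarly $x_N$ cannot be a weighted real edge. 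So $x_1$ is an unweighted ghost and $x_N$ an unweighted real edge.

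Because $p$ starts with a ghost edge and ends with a real edge it contains a ghost-to-real transition; I would take the last one, $x_sx_{s+1}=g_i^*h_j$, so that $\sigma:=x_{s+1}\cdots x_N$ is a real path from $y:=s(g)=s(h)$ to $v$. A forbidden word $e_1^*f_1$ excludes $i=j=1$, so the transition needs a weighted edge; and if $\sigma$ used a weighted edge we would again obtain $v\in T(r(E^1_w))$, against (iv). Hence $\sigma$ is unweighted, $j=1$ and $i\geq 2$, that is $g\in E^1_w$. In particular a purely unweighted $p$ is impossible outright (it could contain no ghost-to-real transition at all), so the whole matter reduces to ruling out this forced weighted occurrence.

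The main obstacle is this last step. Here I would apply Lemma \ref{lem3.-1} to the reverse of the prefix $x_1\cdots x_s$: it begins with the weighted real edge $g_i$ ($i\geq 2$), hence decomposes into super-special and unweighted blocks $p_1q_1^*p_2q_2^*\cdots$. Reassembling these blocks together with the unweighted suffix $\sigma$, and using the cycles $c$ at $u$ and $d$ at $v$ to supply the identifications $s(p_1)=s(q_n)$ and $s(p_i)=s(q_{i-1})$ that close the alternating ladder, I expect to recover exactly a family $p_1,\dots,p_n,q_1,\dots,q_n$ of the shape forbidden by Condition (v) of Definition \ref{defwb}. The delicate point, as in the proof of Corollary \ref{cor3.0}, is the bookkeeping: matching the super-special/unweighted decomposition to the precise first-letter, last-letter, and source–range pattern of Condition (v) once the two cycles are spliced in. This contradiction rules out any weighted edge in $p$, and with the unweighted case already disposed of, completes the proof of (ii).
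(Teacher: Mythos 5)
Your part (i) and the endpoint/transition analysis in part (ii) are correct and close to the paper's argument (the paper takes the \emph{first} ghost-to-real transition of $p$ where you take the \emph{last}; this by itself is a harmless mirror image). The genuine gap is in your final step: Condition (v) cannot be violated by this configuration. Condition (v) demands a \emph{closed} ladder, i.e. the cyclic identification $s(p_1)=s(q_n)$ in addition to $r(p_i)=r(q_i)$ and $s(p_i)=s(q_{i-1})$. Applying Lemma \ref{lem3.-1} to the reverse of the prefix $x_1\cdots x_s$ gives super-special blocks $p_1,\dots,p_n$ and unweighted blocks $q_1,\dots,q_{n-1}$ forming an \emph{open} ladder running from $y=s(p_1)$ to $u=\hat r(p_n)$; to close it you would need a path in $(E,w)$ from $y$ to $u$ with unweighted first letter and last letter different from that of $p_n$. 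The only unweighted material at hand is $\sigma$ (which ends at $v$, not $u$) and the cycles based at $u$ and $v$; nothing connects $y$ or $v$ back to $u$, and no such connecting path need exist in $(E,w)$. So the identifications you hope the cycles will ``supply'' are simply not available, and the sentence ``I expect to recover exactly a family \dots of the shape forbidden by Condition (v)'' is exactly where the proof breaks.

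The fix is immediate and requires no Condition (v) at all: Condition (iv) already yields the contradiction from your own setup. In the decomposition of the reversed prefix, the last block $p_n$ is super-special and its first letter is a weighted edge with index $\geq 2$ (as in the proof of Lemma \ref{lem3.-1}, the letter following each unweighted ghost block $q_j^*$ must have index $>1$, hence its underlying edge has weight $>1$), and $\hat r(p_n)=u$. Therefore $u\in T(r(E^1_w))$, while the cycle $c$ is based at $u$ --- contradicting Condition (iv) of Definition \ref{defwb}. The paper's proof is the mirror image of this: it takes the first ghost-to-real transition, applies Lemma \ref{lem3.-1} to the suffix $x_i\cdots x_m$, and observes that its last block ends at $v$ and begins with a weighted edge, contradicting Condition (iv) via the cycle $d$ based at $v$.
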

\begin{proof}
Clearly $c$ and $d$ have no exit in $\hat E$. Hence (i) holds.\\
Assume now that there is a $p=x_1\dots x_m\in P'$ such that $\hat s_d(p)=u$ and $\hat r_d(p)=v$. Since $c$ and $d$ have no exit, we have $x_1=e^*_1$ and $x_m=f_1$ where $e_1$ is an entrance for $c$ and $f_1$ is an entrance for $d$ (note that $w(e)=w(f)=1$ because of Condition (iv) in Definition \ref{defwb}). Let $i$ be minimal with the property that $x_{i}$ is a real edge. Then there are $g,h\in E^1$, $1\leq j \leq w(g)$ and $1\leq k\leq w(h)$ such that $x_{i-1}=g^*_j$ and $x_{i}=h_k$. Clearly $g\in E^1_u$ (and hence $j=1$) because of Condition (iv) in Definition \ref{defwb}. It follows that $h\in E^1_w$ and $k\geq 2$ since $g^*_jh_k$ is not forbidden. Set $o:=x_i\dots x_m$. By Lemma \ref{lem3.-1}, $o=p_1q_1^*\dots p_{n-1}q_{n-1}^*p_n$ where $n\geq 1$, $p_1,\dots,p_n$ are super-special paths in $\hat E$ and $q_1,\dots,q_{n-1}$ are unweighted paths in $\hat E$. Clearly the first letter of $p_n$ equals $h'_{k'}$ for some $h'\in E_w^1$ and $k'\geq 2$ (since $p_1$ has this property and $q_1,\dots,q_{n-1}$ are unweighted). But this contradicts Condition (iv) in Definition \ref{defwb} since $\hat r(p_n)= v$.
\end{proof}

In the following we will use the symbols $\overset{\nod}{\Longrightarrow}$, $\Longrightarrow$ and $\approx$. They are defined in \cite[Definition 14]{preusser} resp. \cite[Remark 16(b)]{preusser}. A quasicycle $q$ is called {\it selfconnected} if $q\overset{\nod}{\Longrightarrow} q$. A sequence $q_1,\dots,q_n$ of quasi-cycles is called a {\it chain of length $n$} if $q_i\not\approx q_j$ for any $i\neq j$ and $q_1 \Longrightarrow q_2\Longrightarrow \dots \Longrightarrow q_n$.
\begin{corollary}\label{cor3.3}
Suppose that $(E,w)$ is finite, no cycle has an exit and the weighted part of $(E,w)$ is well-behaved. Then $\GKdim L_K(E,w)\leq 1$.
\end{corollary}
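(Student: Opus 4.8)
The plan is to read the bound off the Gelfand--Kirillov dimension formula for finite weighted graphs established in \cite{preusser}, rather than estimating dimensions of homogeneous components by hand. I expect that formula to express $\GKdim L_K(E,w)$ as the maximum of $0$, of $2n-1$ taken over all chains of quasicycles of length $n$, and of $2n$ taken over all such chains that involve a selfconnected quasicycle (this is the weighted analogue of the Leavitt path algebra formula). Granting this, to conclude $\GKdim L_K(E,w)\le 1$ it suffices to prove two combinatorial facts: there is no chain of length $\ge 2$ (so the contribution $2n-1$ never exceeds $1$), and no quasicycle is selfconnected (so the contribution $2n$ never occurs). Then $\GKdim L_K(E,w)=\max(0,\,2\cdot 1-1)\le 1$, according to whether or not a quasicycle exists.

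First I would apply Corollary \ref{cor3.0}: since the weighted part of $(E,w)$ is well-behaved, the quasicycles are precisely the words $c$ and $c^*$ with $c$ a cycle in $\hat E$. In particular each quasicycle is based at a vertex, and since no cycle has an exit, Lemma \ref{lem3.2}(i) shows that distinct cycles are based at distinct vertices. This lets me identify the base vertices $u,v$ appearing in Lemma \ref{lem3.2} with the quasicycles in a chain, and it guarantees that any connecting path between two quasicycles avoids every quasicycle and therefore lies in $P'$.

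Next, to rule out a chain of length $\ge 2$, I would suppose $q_1\Longrightarrow q_2$ with $q_1\not\approx q_2$ and unwind the relation $\Longrightarrow$ from \cite[Remark 16(b)]{preusser}: such a connection produces a nod-path $p\in P'$ whose source is the base vertex $u$ of the cycle underlying $q_1$ and whose range is the base vertex $v$ of the cycle underlying $q_2$. As $q_1\not\approx q_2$, these are genuinely distinct cycles, so this is exactly the configuration excluded by Lemma \ref{lem3.2}(ii); hence no chain of length $\ge 2$ exists. The same mechanism, applied to a selfconnection $q\overset{\nod}{\Longrightarrow} q$, yields a nontrivial $p\in P'$ from the base vertex of the underlying cycle back to itself, again forbidden by Lemma \ref{lem3.2}(ii) (with $u=v$), so no quasicycle is selfconnected. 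Feeding both facts into the formula of \cite{preusser} gives the claim.

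The main obstacle I anticipate is the bookkeeping at the interface between the abstract relations $\Longrightarrow$, $\overset{\nod}{\Longrightarrow}$ and $\approx$ defined in \cite{preusser} and the concrete ``$P'$-path between base vertices'' statement of Lemma \ref{lem3.2}. Specifically, one must check that a chain step or a self-connection genuinely produces a \emph{nontrivial} connecting path all of whose edges avoid every quasicycle, so that it lies in $P'$ and not merely among all nod-paths, and that its endpoints are the base vertices $u,v$ in the precise sense Lemma \ref{lem3.2} requires. Corollary \ref{cor3.0} is what makes this step clean, since it pins the quasicycles down as exactly the cycles of $\hat E$ together with their stars.
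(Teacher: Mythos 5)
Your overall strategy is the same as the paper's: identify the quasicycles via Corollary \ref{cor3.0}, rule out selfconnections and chains of length $\geq 2$ using Lemma \ref{lem3.2}, and feed this into the GK-dimension formula of \cite[Theorem 22]{preusser}. But there is a genuine gap at the central step, precisely at the point you defer to ``bookkeeping'': the claim that any connecting path between two quasicycles avoids every quasicycle and hence lies in $P'$ is false, and Lemma \ref{lem3.2}(i) does not yield it. In fact the no-exit hypothesis works \emph{against} you here: if $q=c$ for a cycle $c$ in $\hat E$ and $coq'$ is a nod-path whose connector $o$ begins with a real edge, then, since $c$ has no exit, $o$ is forced to run along $c$ itself, so its letters are letters of the quasicycle $c$ and $o\notin P'$; Lemma \ref{lem3.2}(ii) is simply inapplicable. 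This case cannot be waved away: for $q'=(c')^*$ with $c'$ a rotation of $c$ one has $q\not\approx q'$, so this is a candidate chain step that your argument never examines.

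The paper's proof spends its effort exactly there. In the all-real-edges case it uses Lemma \ref{lem3.2}(i) to force the target quasicycle to be $c'$ or $(c')^*$, where $c'$ is the rotation of $c$ based at the range of the connector; the first option contradicts $q\not\approx q'$ (or $q=q'$), and the second forces $c'(c')^*$ to be a nod-path, which is impossible because the hypotheses imply every cycle in $\hat E$ is unweighted and super-special, so the junction $e_1e_1^*$ is forbidden. That same fact (which your proposal does not isolate) is also what handles the empty connector: if $qq'$ is a nod-path then $q=q'$. Finally, even when the connector does contain a ghost edge, it still need not lie in $P'$ in its entirety: only the suffix starting at the first ghost edge does, and establishing even that requires the minimality argument together with Lemma \ref{lem3.2}(i) (to see that this ghost edge is not a letter of any cycle) before Lemma \ref{lem3.2}(ii) can be applied to that suffix. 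So your skeleton is correct, but the reduction ``chain step $\Rightarrow$ path in $P'$ between base vertices'' fails as stated, and repairing it is the actual content of the paper's proof.
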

\begin{proof}
By Corollary \ref{cor3.0} $\{c,c^*\mid c \text{ is a cycle in }\hat E\}$ is the set of all quasicycles. Since no cycle has an exit and the weighted part of $(E,w)$ is well-behaved, any cycle in $\hat E$ is unweighted and super-special. Suppose $qq'$ is a nod-path where $q$ and $q'$ are quasicycles. Then $q=q'$ by Lemma \ref{lem3.2}(i) (note that if $c$ is a cycle in $\hat E$, then $cc^*$ and $c^*c$ are not nod-paths since $c$ is unweighted and super-special). Next we show that if $q$ and $q'$ are quasicycles such that $q\not\approx q'$ or $q=q'$, then $q\overset{\nod}{\Longrightarrow} q'$ cannot hold (and hence no quasicycle is selfconnected and the maximal length of a chain of a quasicycles is $1$).\\
Let $q$ and $q'$ be quasicycles such that $q\not\approx q'$ or $q=q'$ and assume that $q\overset{\nod}{\Longrightarrow} q'$. We only consider the case that $q=c$ for some cycle $c$ in $\hat E$, the case that $q=c^*$ for some cycle $c$ in $\hat E$ can be treated similarly. Since $c\overset{\nod}{\Longrightarrow} q'$ there is a nod-path $o$ such that $c$ is not a prefix of $o$ and $coq'$ is a nod-path. Write $c=x_1\dots x_m$ and $o=y_1\dots y_n$.\\
\\
\underline{Case 1} Suppose that $y_1,\dots,y_n$ are real edges. Then $o=x_1\dots x_k$ for some $1\leq k <m$ since $c$ has no exit. Set $c':=x_{k+1}\dots x_mx_1\dots x_k$. Assume that $q'=d$ for some cycle $d$ in $\hat E$. Then it follows from Lemma \ref{lem3.2}(i) that $d=c'$. But that contradicts the assumption that $q\not\approx q'$ or $q=q'$. Assume now that $q'=d^*$ for some cycle $d$ in $\hat E$. It follows again from Lemma \ref{lem3.2}(i) that $d=c'$. But now we get the contradiction that $c'(c')^*$ is a nod-path.\\
\\
\underline{Case 2} Suppose that one of the letters $y_1,\dots,y_n$ is a ghost edge. Let $i$ be minimal such that $y_i$ is a ghost edge. Then $y_i=(f_j)^*$ for some $f\in E^1$ and $1\leq j\leq w(e)$. Clearly $y_1\dots y_{i-1}=x_1\dots x_{i-1}$. Clearly $x_{i-1}=e_1$ where $e\in E^1$ is special since $c$ is unweighted and super-special. Since $y_{i-1}y_i=x_{i-1}y_i=e_1(f_j)^*$ is not forbidden, we have $e\neq f$. Assume $f_j$ is a letter of a cycle. Then, by Lemma \ref{lem3.2}(i), $f_j$ is a letter of $c$. Since $r(e)=r(f)$ it follows that $f_j=e_1$ which cannot hold since $e\neq f$. Hence $f_j\in E'$. It follows from Lemma \ref{lem3.2}(ii), that $y_i\dots y_n\in P'$. But that contradicts Lemma \ref{lem3.2}(ii) since clearly there are cycles $c'$ and $d$ in $\hat E$ such that $c'$ is based at $\hat s_d(y_i\dots y_n)$ and $d$ is based at $\hat r_d(y_i\dots y_n)$.\\
\\
Hence no quasicycle is selfconnected and the maximal length of a chain of a quasicycles is $1$. Thus $\GKdim L_K(E,w)\leq 1$ by \cite[Theorem 22]{preusser}.
\end{proof}

\begin{lemma}\label{lem3.25}
Let $q$ be a quasicycle such that $qq^*$ is a nod-path and $q\approx q^*$. Then $q=q^*$.
\end{lemma}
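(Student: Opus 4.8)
My reading of the relation $\approx$ (from \cite{preusser}, as it is used in the proof of Corollary \ref{cor3.3}) is that $p\approx p'$ means $p'$ is a cyclic rotation of $p$. So the plan is to cash in the hypothesis $q\approx q^{*}$ by writing $q=x_{1}\cdots x_{n}$ and recording that
\[
q^{*}=x_{k+1}x_{k+2}\cdots x_{n}x_{1}\cdots x_{k}
\]
for some $0\le k\le n-1$. The entire statement then reduces to proving $k=0$, because $k=0$ says exactly $q^{*}=q$, while for each $k\neq 0$ I intend to produce a contradiction with the hypothesis that $q$ is a quasicycle.

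So assume $k\neq 0$ and look at the suffix $t:=x_{k+1}\cdots x_{n}$ of $q$. It is a nonempty subword of $q$, hence of $q^{2}$, of length $n-k<n=|q|$. The heart of the argument is that $t$ is a nod$^2$-path. Indeed $t$ is a nod-path, being a subword of the nod-path $q$; and $t^{2}=x_{k+1}\cdots x_{n}x_{k+1}\cdots x_{n}$ is again a nod-path, since every two-letter subword of $t^{2}$ either already occurs inside $q$ or is the junction $x_{n}x_{k+1}$ between the two copies of $t$. But by the displayed formula $x_{k+1}$ is the \emph{first} letter of $q^{*}$, so $x_{n}x_{k+1}$ is precisely the two-letter subword of $qq^{*}$ straddling the concatenation point; since $qq^{*}$ is a nod-path by hypothesis, this junction is not forbidden (and it genuinely composes, being a subword of the path $qq^{*}$). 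Thus $t^{2}$ has no forbidden subword, so $t$ is a nod$^2$-path. This is the contradiction: $t$ is a subword of $q^{2}$ of length strictly less than $|q|$ which is a nod$^2$-path, contradicting the defining property of a quasicycle. Hence $k=0$ and $q=q^{*}$.

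The step I would guard most carefully is the identification of the inter-copy junction of $t^{2}$ with the concatenation junction of $qq^{*}$, since this is exactly where the hypothesis that $qq^{*}$ is a nod-path gets consumed, and it is the reason the lemma fails without it: one can exhibit non-quasicyclic words such as $e_{2}e_{2}^{*}f_{2}f_{2}^{*}$ with $q\approx q^{*}$ (here $k=2$) but $q\neq q^{*}$, and the same suffix $t=f_{2}f_{2}^{*}$ is then a shorter nod$^2$-path, which is exactly why such a word is not a quasicycle. I would also verify the trivial boundary bookkeeping (that $t$ is nonempty of length $<|q|$ for every $1\le k\le n-1$, including the extreme case $k=n-1$, where $t$ is a single letter). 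It is worth noting that no degree, parity, or primitivity considerations are needed: the junction argument rules out all $k\neq 0$ directly, so the proof stays short.
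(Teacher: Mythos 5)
Your proof is correct and takes essentially the same route as the paper: both read $q\approx q^*$ as a cyclic rotation $x_{k+1}\dots x_n x_1\dots x_k = x_n^*\dots x_1^*$, and both use the non-forbidden junction $x_nx_n^*$ (supplied by the hypothesis that $qq^*$ is a nod-path) to manufacture a nod$^2$-path of length $n-k<n$, contradicting quasicyclicity. The only cosmetic difference is that you exhibit the suffix $x_{k+1}\dots x_n$ of $q$ and contradict $q$ being a quasicycle, whereas the paper exhibits its $*$-image $x_n^*\dots x_{k+1}^*$ as a prefix of $q^*$ and contradicts $q^*$ being a quasicycle; your version has the small advantage of not needing the fact that $q^*$ is again a quasicycle.
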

\begin{proof}
Write $q=x_1\dots x_n$. Then $qq^*=x_1\dots x_nx_n^*\dots x_1^*$. Since $qq^*$ is a nod-path, $x_nx_n^*$ is not forbidden. Since $q\approx q^*$ there is a $k\in \{1,\dots,n\}$ such that $x_{k+1}\dots x_{n}x_{1}\dots x_{k}=x_n^*\dots x_1^*$. Assume that $k<n$. Then $x_l^*=x_n$ for some $l>1$. Since $x_nx_n^*$ is not forbidden, it follows that $x_n^*\dots x_l^*$ is a nod$^2$-path. But this cannot hold since $q^*$ is a quasicycle. Thus $k=n$ and therefore $q=q^*$.
\end{proof}

\begin{lemma}\label{lem3.3}
Suppose that $(E,w)$ is finite and $\GKdim L_K(E,w)\leq 1$. Then there is no quasicycle of homogeneous degree $0=(0,\dots,0)$.
\end{lemma}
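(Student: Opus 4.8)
The plan is to assume for contradiction that there is a quasicycle $q=x_1\dots x_n$ of degree $0$ and to derive from it either a selfconnected quasicycle or a chain of quasicycles of length $2$; by \cite[Theorem 22]{preusser} either conclusion forces $\GKdim L_K(E,w)\ge 2$, contradicting the hypothesis. The first step is to extract structure from $\deg q=0$. As $q$ is nontrivial of degree $0$, the real and the ghost letters of $q$ occur with equal multiplicity in each index, so $q$ has at least one real and at least one ghost letter. Hence, reading $q^2$ cyclically, some length-$2$ subword has the form $g^*_bh_c$ with $g^*_b$ a ghost and $h_c$ a real letter. Path continuity at this junction forces $s(g)=s(h)$, and since $g^*_bh_c$ is a non-forbidden subword of the nod-path $q^2$ it cannot be of the forbidden form $g^*_1h_1$; thus $b\ge 2$ or $c\ge 2$, so $q$ contains a letter of index $\ge 2$ and in particular involves a weighted edge.

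Next comes a normalization. Using \cite[Remark 16(b),(c)]{preusser} I would rotate $q$ inside its $\approx$-class, and if necessary replace $q$ by the degree-$0$ quasicycle $q^*$ (which interchanges real and ghost letters), so as to arrange $x_n=e^*_j$ with $e\in E^1_w$ and $2\le j\le w(e)$. Because the letter of index $\ge 2$ produced above is already present, no index needs to be altered, so $\deg q=0$ is preserved; moreover these moves do not change whether $q\approx q^*$ holds. With this normalization the junction $x_nx^*_n=e^*_je_j$ is admissible (its two indices are $\ge 2$), so $qq^*$ and $(q^*)^2$ are nod-paths.

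Now the dichotomy. If $q\not\approx q^*$, then $q\ne q^*$, so $q$ is not a prefix of $o:=q^*$, and $qoq^*=q(q^*)^2$ is a nod-path (the only new junction is the admissible $x_nx^*_n$); hence $q\overset{\nod}{\Longrightarrow}q^*$ and $q,q^*$ is a chain of length $2$. If instead $q\approx q^*$, then since $qq^*$ is a nod-path, Lemma \ref{lem3.25} gives $q=q^*$, whence $x_1=x^*_n=e_j$. Using that $e$ is weighted, I would fix $j'\in\{1,\dots,w(e)\}$ with $j'\ne j$ and put $o:=e_{j'}x_2\dots x_{n-1}e^*_{j'}$; altering only the two boundary indices leaves $o$ a nod-path that is closed at $\hat s_d(q)$ and distinct from $q$, and the two junctions $x_ne_{j'}=e^*_je_{j'}$ and $e^*_{j'}x_1=e^*_{j'}e_j$ are admissible because $j\ge 2$. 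Thus $qoq$ is a nod-path with $q$ not a prefix of $o$, so $q\overset{\nod}{\Longrightarrow}q$ is selfconnected. Either way \cite[Theorem 22]{preusser} yields $\GKdim L_K(E,w)\ge 2$, a contradiction.

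The delicate point is the junction bookkeeping in the last step: every two-letter subword created by the concatenations $q(q^*)^2$ and $qoq$ must be checked to avoid both forbidden families $e^v_a(e^v_b)^*$ and $g^*_1h_1$. This is precisely where the index $\ge 2$ guaranteed by $\deg q=0$ is indispensable, since it keeps the ghost--real junctions $e^*_je_{j'}$, $e^*_{j'}e_j$ and $e^*_je_j$ out of the index-$1$ forbidden form, and where the weighted-edge index freedom supplies an $o\ne q$ so that $q$ fails to be a prefix of $o$. One must also verify that the normalization (rotation together with the possible passage to $q^*$) genuinely preserves $\deg q=0$ and the truth value of $q\approx q^*$, and confirm that the relations $\overset{\nod}{\Longrightarrow}$, $\Longrightarrow$ and the notions of chain and selfconnection from \cite[Definition 14]{preusser} are exactly those realised by the explicit nod-paths constructed here.
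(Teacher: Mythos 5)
Your proposal is correct and follows essentially the same route as the paper: degree $0$ forces a letter of index $\geq 2$, the normalisation via \cite[Remark 16(b),(c)]{preusser} arranges $x_n=e_j^*$ with $e\in E^1_w$ and $2\leq j\leq w(e)$, the case $q\not\approx q^*$ is ruled out by a chain of length $2$ (the paper does the same thing contrapositively, deducing $q\approx q^*$ from $q\Longrightarrow q^*$ and \cite[Theorem 22]{preusser}), and Lemma \ref{lem3.25} then yields $q=q^*$. Your only deviation is the final witness: the paper changes the last letter to $e_1^*$ to get a quasicycle $q'$ with $q\Longrightarrow (q')^*$ and $q\not\approx (q')^*$, i.e.\ a chain of length $2$, whereas you change both end letters to index $j'\neq j$ to make $q$ selfconnected via $q\overset{\nod}{\Longrightarrow}q$; your junction bookkeeping (type (i) forbidden words are index-independent, type (ii) needs both indices equal to $1$) is sound, so this variant contradicts \cite[Theorem 22]{preusser} just as well.
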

\begin{proof}
Suppose that there is a quasicycle $q=x_1\dots x_n$ of homogeneous degree $0$. Assume that $\ind(q)=1$. Then $q$ consists only of real edges or only of ghost edges. But this contradicts the assumption that $q$ has homogeneous degree $0$. Therefore $\ind(q)>1$ and hence there is a $1\leq i \leq n$, an $e\in E_w^1$ and a $2\leq j \leq w(e)$ such that $x_i=e_j$ or $x_i=e_j^*$. By \cite[Remark 16(b),(c)]{preusser} we may assume that $i=n$ and $x_n=e_j^*$. It follows that $qq^*=x_1\dots x_{n-1}e_j^*e_j x_{n-1}^*\dots x_1^*$ is a nod-path and therefore $q\Longrightarrow q^*$. Hence, by \cite[Theorem 22]{preusser}, $q\approx q^*$. It follows from the previous lemma that $q=q^*$. Hence $x_1=e_j$. Set $q':=x_1\dots x_{n-1}e^*_1$. One checks easily that $q'$ is a quasicycle. Clearly $q(q')^*$ is a nod-path and therefore $q\Longrightarrow (q')^*$. But $q\not\approx (q')^*$ (by \cite[Remark 16(a)]{preusser}, $e^*_j$ appears precisely once in $q$; hence it does not appear in $q'$ and therefore $e_j$ does not appear in $(q')^*$).  This contradicts the assumption that $\GKdim L_K(E,w)\leq 1$.
\end{proof}

\begin{corollary}\label{cor3.4}
Suppose that $(E,w)$ is finite and $\GKdim L_K(E,w)\leq 1$. Then $L_K(E,w)$ is locally finite.
\end{corollary}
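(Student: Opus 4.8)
The plan is to combine the nod-path basis of Theorem~\ref{thmhp} with the fact, supplied by Lemma~\ref{lem3.3}, that under the present hypotheses no quasicycle can be homogeneous of degree $0$. By Theorem~\ref{thmhp} the dimension of the homogeneous component $L_K(E,w)_g$ equals the number of nod-paths of degree $g$, and since $(E,w)$ --- and hence the finite graph $\hat E_d$ --- is finite, there are only finitely many nod-paths of any prescribed length. Thus local finiteness is equivalent to the assertion that, for every $g\in\mathbb Z^n$, the nod-paths of degree $g$ have bounded length. I would argue by contradiction, assuming that for some fixed $g$ there are nod-paths of degree $g$ of arbitrarily large length.

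First I would record the degree behaviour of quasicycles. By Lemma~\ref{lem3.3} every quasicycle $q$ satisfies $\deg q\neq 0$; consequently, replacing a factor $q^{k}$ by $q^{k+1}$ inside a nod-path always changes its degree by the fixed nonzero vector $\deg q$. This is the mechanism that converts ``fixed degree'' into ``bounded exponent''. The second ingredient is the quasicycle description of the Gelfand--Kirillov dimension from \cite[Theorem 22]{preusser}: the hypothesis $\GKdim L_K(E,w)\leq 1$ forces that no quasicycle is selfconnected and that there is no chain of quasicycles of length $>1$. I would use these two facts to control the shape of an arbitrary nod-path.

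The heart of the argument is a structural decomposition of long nod-paths. Using that $\hat E_d$ is finite, a nod-path of length far exceeding the size of $\hat E_d$ must traverse a cycle repeatedly and therefore contain a high power $q^{k}$ of some quasicycle $q$. The conditions ``no selfconnected quasicycle'' and ``maximal chain length $1$'' then prevent a single nod-path from cycling through two inequivalent quasicycles or from returning to the same quasicycle along a nontrivial detour; concretely, the real and ghost edges lying outside every quasicycle can only form nod-paths of bounded length (a long such path would again contain a quasicycle), so an arbitrary nod-path $p$ can be written as $p=u\,q^{k}v$, where $q$ is a quasicycle and $u,v$ range over a finite set $S$ of nod-paths independent of $p$. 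Granting this, the degree equation $g=\deg u+k\,\deg q+\deg v$ together with $\deg q\neq 0$ determines $k$ uniquely once $u,v$ are fixed; as $S$ is finite there are only finitely many nod-paths of degree $g$, contradicting the assumption and proving local finiteness.

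The step I expect to be the main obstacle is precisely the decomposition $p=u\,q^{k}v$: extracting a genuine quasicycle power from a long nod-path (rather than a mere repeated vertex) and, more delicately, ruling out several independent cyclic blocks require a careful invocation of the $\overset{\nod}{\Longrightarrow}$, $\Longrightarrow$ and $\approx$ machinery of \cite{preusser} together with the no-exit and isolation properties in the spirit of Lemma~\ref{lem3.2}. Once the decomposition is in place, the degree count is immediate from Lemma~\ref{lem3.3}, which is why this statement is formulated as a corollary of that lemma.
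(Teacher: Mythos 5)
Your proposal is correct and follows essentially the same route as the paper: the paper writes every nontrivial nod-path as $oq^kq'p$ with $o,p$ empty or in $P'$, $q$ a quasicycle and $q'$ a proper prefix of $q$ (this is your decomposition $u\,q^{k}v$), and then combines Lemma~\ref{lem3.3} with the finiteness of $P'$ (\cite[Lemma 21]{preusser}) and of the set of quasicycles (\cite[Remark 16(a)]{preusser}) to conclude that each homogeneous component contains only finitely many basis nod-paths, exactly your degree count $g=\deg u+k\deg q+\deg v$. The one point you leave implicit that the paper makes explicit is the finiteness of the set of quasicycles itself, which is needed so that $q$ (not just $u$ and $v$) ranges over a finite set in that count.
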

\begin{proof}
Since $(E,w)$ is finite and $\GKdim L_K(E,w)\leq 1$, any nontrivial nod-path is of the form $oq^kq'p$ where $q$ is a quasicycle, $k\geq 0$, $o$ and $p$ are either the empty word or nod-paths in $P'$ and $q'$ is either the empty word or a prefix of $q$ not equal to $q$. By \cite[Remark 16(a)]{preusser}, there are only finitely many quasicycles and, by Lemma \ref{lem3.3}, none of them is of homogeneous degree $0$. Further we have $|P'|<\infty$ by \cite[Lemma 21]{preusser}. Thus $L_K(E,w)$ is locally finite (for fixed $o$, $p$, $q$ and $q'$ as above there can at most be one nod-path of the form $oq^kq'p$ in each homogeneous component). 
\end{proof}

We are now ready to prove Theorem \ref{thmm1}.
\begin{proofthmm1}
(i)$\Rightarrow$(ii). Suppose $L_K(E,w)$ is locally finite. Then $(E,w)$ is finite, no cycle has an exit and the weighted part of $(E,w)$ is well-behaved by Lemma \ref{lem3.0}, Corollary \ref{cor3.1} and Corollary \ref{cor3.2}. \\
(ii)$\Rightarrow$(iii). Suppose that $(E,w)$ is finite, no cycle has an exit and the weighted part of $(E,w)$ is well-behaved. Then $\GKdim L_K(E,w)\leq 1$ by Corollary \ref{cor3.3}.\\
(iii)$\Rightarrow$(i). Suppose that $(E,w)$ is finite and $\GKdim L_K(E,w)\leq 1$. Then $L_K(E,w)$ is locally finite by Corollary \ref{cor3.4}.
\end{proofthmm1}

\begin{remark}
The proof of Corollary \ref{cor3.4} shows that if $L_K(E,w)$ is locally finite, then $|P'|^2kl$, where where $k$ is the number of quasicycles and $l$ is the maximal length of a quasicycle, is an upper bound for $\{\dim_K L_K(E,w)_g\mid g\in\mathbb{Z}^n\}$, where $n$ is the maximal weight.
\end{remark}

\section{Locally finite wLpas are isomorphic to locally finite Lpas}
In this section we show that if $L_K(E,w)$ is locally finite, then it is isomorphic to a locally finite Leavitt path algebra. It is well-known that a locally finite Leavitt path algebra over a field $K$ is isomorphic to a finite direct sum of matrix algebras over $K$ and $K[X,X^{-1}]$ and therefore is Noetherian, cf. \cite[Theorem 4.2.17]{abrams-ara-molina}.

We call an $e\in E^{1}_w$ {\it weighted edge of type A} if $s^{-1}(s(e))=\{e\}$ and {\it weighted edge of type B} otherwise. The subset of $E^{1}_w$ consisting of all weighted edges of type A (resp. B) is denoted by $E^1_{w,A}$ (resp. $E^1_{w,B}$).

\begin{proposition}\label{5.2}
Suppose $L_K(E,w)$ is locally finite. Then there is a weighted graph $(\tilde E,\tilde w)$ such that $\tilde E^{1}_w=\tilde E^{1}_{w,B}=E^{1}_{w,B}$, the elements of $\tilde r(\tilde E^{1}_w)$ are sinks in $(\tilde E, \tilde w)$, $L_K(\tilde E,\tilde w)\cong L_K(E,w)$ and $L_K(\tilde E,\tilde w)$ is locally finite.
\end{proposition}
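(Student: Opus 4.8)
The plan is to use the structural characterisation in Theorem \ref{thmm1} to pin down the shape of $(E,w)$, then to build $\tilde E$ from $E$ by local surgeries at the weighted edges, and finally to exhibit an explicit isomorphism verified through the nod-path basis of Theorem \ref{thmhp}.

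First I would record the structural consequences of local finiteness. By Theorem \ref{thmm1} the graph $(E,w)$ is finite, no cycle has an exit, and the weighted part of $(E,w)$ is well-behaved. Conditions (ii) and (iv) of Definition \ref{defwb} then force every vertex of $T(r(E^1_w))$ to emit at most one edge and to lie on no cycle; since $(E,w)$ is finite, it follows that for each weighted edge $e$ the tree $T(r(e))$ is a finite directed line terminating in a sink, and by Condition (iii) the lines attached to weighted edges that are not in line are pairwise disjoint. In particular the unique edge emitted by a non-sink vertex of such a line, if it is weighted, would be a type A weighted edge (its source emits nothing else). This ``tails are disjoint lines to sinks'' picture is what makes the surgeries below well-defined.

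Next I would treat the two required modifications separately. For a weighted edge $e\in E^1_{w,A}$, the source $v=s(e)$ emits only $e$, so the relations of Definition \ref{def3} degenerate to $e_ie_j^*=\delta_{ij}v$ $(1\le i,j\le w(e))$ and $\sum_i e_i^*e_i=r(e)$; in $K_0$-terms this is exactly $[r(e)]=w(e)\,[s(e)]$. I would realise this by an unweighted local surgery that removes $e$ from the weighted edge set while leaving the isomorphism type of the algebra unchanged: as a guiding check, when $r(e)$ is a sink one lets $r(e)$ emit $w(e)$ parallel unweighted edges into $v$ (so $v$ becomes a sink), and both the weighted and the new unweighted configuration are easily seen to be $M_{w(e)+1}(K)$. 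For a weighted edge $e\in E^1_{w,B}$ whose range $u=r(e)$ is not already a sink, the line structure shows that $u$ emits a single \emph{unweighted} edge down its tail; I would then absorb this tail so that $u$ becomes a sink. Since $v$ emits nothing but $e$ in the type A case, and since tails of non-in-line weighted edges are disjoint, each surgery is local and compatible with the rest of $E$, and carrying them all out produces a weighted graph $(\tilde E,\tilde w)$ with $\tilde E^1_w=\tilde E^1_{w,B}=E^1_{w,B}$ and with every vertex of $\tilde r(\tilde E^1_w)$ a sink.

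The isomorphism $L_K(E,w)\cong L_K(\tilde E,\tilde w)$ I would obtain by writing down the images of the generators $v,e_i,e_i^*$ under each surgery, verifying that they satisfy relations (i) and (ii) of Definition \ref{def3} for the modified graph, and checking bijectivity on the nod-path basis of Theorem \ref{thmhp} (for example by matching homogeneous components, which simultaneously exhibits the map as graded and transports local finiteness; alternatively one checks directly that $(\tilde E,\tilde w)$ again satisfies condition (ii) of Theorem \ref{thmm1}). The main obstacle is the relation-checking for the type B surgery together with the tail-absorption needed in the type A case: relation (ii) of Definition \ref{def3} couples \emph{all} edges emitted by a common source, so moving the tail of $r(e)$ must be done without disturbing the remaining edges at $s(e)$, and one must control the configurations in which several weighted edges are in line and share a tail. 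This is exactly where Conditions (iii) and (v) of Definition \ref{defwb}, via Lemma \ref{lem3.-1}, are needed to guarantee that the surgeries are mutually consistent and preserve the nod-path count, and hence that the resulting map is an isomorphism rather than merely a $K_0$- or Morita-equivalence.
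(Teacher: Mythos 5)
Your core mechanism is the right one and is exactly the paper's: a weight-$k$ edge $g$ whose source emits nothing else satisfies relations identical to those of $k$ parallel unweighted edges running \emph{backwards} from $r(g)$ to $s(g)$, so weightedness can be removed by reversal-and-splitting. But your execution has a genuine gap: this surgery cannot be performed edge-locally, and your construction is only actually specified in the one case where it is easiest ($e$ of type A with $r(e)$ a sink); the device you invoke everywhere else, ``absorbing the tail'', is never defined. The edge-local version fails concretely. Take $v\xrightarrow{e}u\xrightarrow{f}x$ with $w(e)=k\geq 2$, $w(f)=1$, $s^{-1}(v)=\{e\}$, $s^{-1}(u)=\{f\}$; here $L_K(E,w)\cong M_{2k+1}(K)$ (its dimension is $(2k+1)^2$, as one counts from the nod-path basis of Theorem \ref{thmhp}). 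Reversing only $e$, so that $u$ emits $k$ parallel unweighted edges into $v$ and still emits $f$, yields the Leavitt path algebra $M_{k+1}(K)\oplus M_2(K)$: relation (i) of Definition \ref{def3} at $u$ in the new graph couples the reversed copies of $e$ with $f$, giving $\sum_i e^{(i)}(e^{(i)})^*+f_1f_1^*=u$ instead of the needed $\sum_i e^{(i)}(e^{(i)})^*=u$. This is precisely why the paper does not do local surgeries at individual weighted edges: it performs one uniform surgery, reversing (and splitting into unweighted parallel copies) \emph{every} edge emitted from the hereditary set $Z=T(r(E^1_w))\cup s(E^1_{w,A})$, i.e.\ the type A weighted edges together with \emph{all} tails of ranges of weighted edges, type A and type B alike. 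Hereditarity of $Z$, combined with Conditions (ii) and (iv) of Definition \ref{defwb}, is what makes this one-stroke reversal consistent; that is the missing idea, and without it ``absorb the tail'' has no content.

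A second, smaller but real, error: your primary route for transporting local finiteness --- ``matching homogeneous components, which simultaneously exhibits the map as graded'' --- cannot work, because the isomorphism is not graded: it sends the elements $e_i$, which have pairwise distinct degrees $\epsilon_i$, to reversed ghost edges all of the same degree $-\epsilon_1$ (and the grading groups $\mathbb{Z}^n$ of the two algebras need not even coincide). Local finiteness is a property of the standard grading, and at this stage of the paper it is not known to be invariant under ungraded isomorphisms; that only follows a posteriori from Theorem \ref{thmm} via Noetherianity, which would be circular here. So the verification you relegate to an ``alternative'' --- checking directly that $(\tilde E,\tilde w)$ is finite, that no cycle in $(\tilde E,\tilde w)$ has an exit, and that its weighted part satisfies Conditions (i)--(v) of Definition \ref{defwb}, i.e.\ condition (ii) of Theorem \ref{thmm1} --- is not optional: it is the bulk of the paper's proof of Proposition \ref{5.2}. (Your deferral of the isomorphism itself is comparable to the paper's, which cites the proof of a replacement result in the literature, but the deferral is only legitimate once the target graph has actually been defined.)
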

\begin{proof}
Set $Z:=T(r(E^1_w))\cup s(E^1_{w,A})$. Note that $Z$ is a hereditary subset of $E^0$. Define a weighted graph $(\tilde E,\tilde w)$ by $\tilde E^0=E^0$, $\tilde E^1=\tilde E^1_Z\sqcup \tilde E^1_{Z^c}$ where
\[\tilde E^1_Z=\{e^{(1)},\dots,e^{(w(e))}\mid e\in E^1, s(e)\in Z\}\text{ and }\tilde E^1_{Z^c}=\{e\mid e\in E^1,s(e)\in E^0\setminus Z\},\]
$\tilde s(e^{(i)})=r(e)$, $\tilde r(e^{(i)})=s(e)$ and $\tilde w(e^{(i)})=1$ for any $e^{(i)}\in \tilde E^1_Z$ and $\tilde s(e)=s(e)$, $\tilde r(e)=r(e)$ and $\tilde w(e)=w(e)$ for any $e\in \tilde E^1_{Z^c}$. One checks easily that $\tilde E^{1}_w=\tilde E^{1}_{w,B}=E^{1}_{w,B}$ and the elements of $\tilde r(\tilde E^{1}_w)$ are sinks in $(\tilde E, \tilde w)$. The proof that $L_K(\tilde E,\tilde w)\cong L_K(E,w)$ is very similar to the proof of \cite[Proposition 28]{hazrat-preusser} and therefore is omitted.\\
It remains to show that $L_K(\tilde E,\tilde w)$ is locally finite. By Theorem \ref{thmm1} it suffices to show that $(\tilde E,\tilde w)$ is finite, no cycle in $(\tilde E,\tilde w)$ has an exit and the weighted part of $(\tilde E,\tilde w)$ is well-behaved. Clearly $(\tilde E,\tilde w)$ is finite. Next we show that no cycle in $(\tilde E,\tilde w)$ has an exit. Let $\tilde c=x_1\dots x_n$ be a cycle in $(\tilde E, \tilde w)$ and $y\in \tilde E^1$ an exit for $\tilde c$. First assume that $\tilde s(x_i)\not\in Z$ for any $1\leq i \leq n$. Then $x_1, \dots, x_n, y\in \tilde E^1_{Z^c}$ (note that $\tilde s(e^{(i)})=r(e)\in Z$ for any $e^{(i)}\in \tilde E^1_{Z}$ since $Z$ is hereditary). It follows that $\tilde c$ is a cycle in $(E,w)$ and $y$ is an exit for $\tilde c$ in $(E,w)$. But this yields a contradiction since no cycle in $(E,w)$ has an exit (by Theorem \ref{thmm1} since $L_K(E,w)$ is locally finite). Assume now that $\tilde s(x_i)\in Z$ for some $1\leq i \leq n$. Then $\tilde s(x_i)\in Z$ for any $1\leq i \leq n$ (note that $Z$ is also hereditary in $(\tilde E, \tilde w)$). Hence $x_1, \dots, x_n\in \tilde E^1_{Z}$. Define the map
\begin{align*}
\phi: \tilde E^1_Z&\rightarrow E^1\\
e^{(j)}&\mapsto e.
\end{align*}
Clearly $\phi(x_n)\dots \phi(x_1)$ is a cycle in $(E,w)$. But this yields a contradiction since $(E,w)$ satisfies Condition (iv) in Definition \ref{defwb}.\\
It remains to show that the weighted part of $(\tilde E,\tilde w)$ is well-behaved, i.e. that Conditions (i)-(v) in Definition \ref{defwb} are satisfied.
\begin{enumerate}[(i)]
\item If $v\in Z$, then clearly $\tilde s^{-1}(v)\subseteq \tilde E^1_Z$. But $\tilde w(e^{(i)})=1$ for any $e^{(i)}\in \tilde E^1_Z$. Hence $v$ does not emit any weighted edge in $(\tilde E, \tilde w)$. Suppose now that $v\in \tilde E^0\setminus Z$. One checks easily that $\tilde s^{-1}(v)=s^{-1}(v)$ and further $\tilde w(e)=w(e)$ for any $e\in \tilde s^{-1}(v)$. Since $v$ does not emit two distinct weighted edges in $(E,w)$, $v$ does not emit two distinct weighted edges in $(\tilde E, \tilde w)$.
\item Obvious since the ranges of the weighted edges in $(\tilde E, \tilde w)$ are sinks.
\item Assume that there are $e, f\in \tilde E^1_w$ such that $e$ and $f$ are not in line in $(\tilde E, \tilde w)$ (in particular $e\neq f$) and $\tilde T(\tilde r(e))\cap \tilde T(\tilde r(f))\neq \emptyset$ (if $z\in \tilde E^0$, then we denote by $\tilde T(z)$ the tree of $z$ in $(\tilde E, \tilde w)$). Since the ranges of $e$ and $f$ are sinks in $(\tilde E, \tilde w)$, it follows that $\tilde r(e)=\tilde r(f)=:v$. Clearly $e,f\in\tilde E^1_{Z^c}$. Hence $r(e)=\tilde r(e)=v=\tilde r(f)=r(f)$, $w(e)=\tilde w(e)>1$ and $w(f)=\tilde w(f)>1$. Since $(E,w)$ satisfies Condition (iii) in Definition \ref{defwb}, $e$ and $f$ are in line in $(E,w)$. Since $e\neq f$, we have $v=r(e)\geq s(f)$ or $v=r(f)\geq s(e)$. In either case the existence of a cycle based at $v$ follows. Since $(E,w)$ satisfies Condition (iv) in Definition \ref{defwb}, we arrived at a contradiction.
\item Obvious since the ranges of the weighted edges in $(\tilde E, \tilde w)$ are sinks.
\item Assume there is an $n\geq 1$ and paths $\tilde p_1,\dots,\tilde p_n,\tilde q_1,\dots,\tilde q_n$ in $(\tilde E,\tilde w)$ such that $\tilde r(\tilde p_i)=\tilde r(\tilde q_i)~(1\leq i\leq n)$, $\tilde s(\tilde p_1)=\tilde s(\tilde q_n)$, $\tilde s(\tilde p_i)=\tilde s(\tilde q_{i-1})~(2\leq i \leq n)$ and for any $1\leq i \leq n$, the first letter of $\tilde p_i$ is a weighted edge, the first letter of $\tilde q_i$ is an unweighted edge and the last letters of $\tilde p_i$ and $\tilde q_i$ are distinct. Clearly $\tilde s(\tilde p_i)\not\in Z$ and $\tilde r(\tilde p_i)\in Z$ for any $i$ since the first (and last since the ranges of weighted edges are sinks) letter of $\tilde p_i$ is weighted. Hence $\tilde s(\tilde q_i)\not\in Z$ and $\tilde r(\tilde q_i)\in Z$ for any $i$. For any $i$ write $\tilde q_i=\tilde f^{(i1)}\dots \tilde f^{(im_i)}$ where $\tilde f^{(i1)},\dots, \tilde f^{(im_i)}\in \tilde E^1$. Let $k_i$ be maximal with the property that $\tilde s(\tilde f^{(ik_i)})\not \in Z$. Then $\tilde s(\tilde f^{(i(k_i+1))}),\dots, \tilde s(\tilde f^{(im_i)})\in Z$ and hence $\tilde f^{(i(k_i+1))},\dots, \tilde f^{(im_i)}\in \tilde E^1_Z$. 
For any $1\leq i\leq n$ set $p_i:=\tilde p_i \phi(\tilde f^{(im_i)})\dots \phi(\tilde f^{(i(k_i+1))})$, where $\phi$ is the map defined on the previous page, and $q_i:=\tilde f^{(i1)}\dots \tilde f^{(ik_i)}$. One checks easily that $p_1,\dots,p_n,q_1,\dots,q_n$ are paths in $(E,w)$ such that $r(p_i)=r(q_i)~(1\leq i\leq n)$, $s(p_1)=s(q_n)$, $s(p_i)=s(q_{i-1})~(2\leq i \leq n)$ and for any $1\leq i \leq n$, the first letter of $p_i$ is a weighted edge, the first letter of $q_i$ is an unweighted edge and the last letters of $p_i$ and $q_i$ are distinct. But this yields a contradiction since $(E,w)$ satisfies Condition (v) in Definition \ref{defwb}.
\end{enumerate}
\end{proof}

\begin{example}\label{ex5.00}
Suppose $(E,w)$ is the weighted graph 
\[
\xymatrix@C+15pt{
&&&z\ar@(ul,ur)^{m}&&&\\
a&u\ar[l]_{k}&v\ar[l]_{e,2}\ar@/^1.7pc/[r]^{f}\ar@/_1.7pc/[r]_{g}&x\ar[u]^{l}\ar[r]^{h}&y\ar[r]^{i,2}&b\ar[r]^{j}&c~.}
\]
The weighted edge $i$ is of type A and the weighted edge $e$ is of type B. Let $Z$ be defined as in the proof of Proposition \ref{5.2}. Then $Z=\{a,u,y,b,c\}$. Let $(\tilde E, \tilde w)$ be the weighted graph
\[
\xymatrix@C+15pt{
&&&z\ar@(ul,ur)^{m}&&&\\
a \ar[r]^{k}& u& v\ar[l]_{e,2}\ar@/^1.7pc/[r]^{f}\ar@/_1.7pc/[r]_{g}& x\ar[u]^{l}\ar[r]^{h}& y& b\ar@/^1.7pc/[l]^{i^{(2)}}\ar@/_1.7pc/[l]_{i^{(1)}}& c\ar[l]_{j}~.}
\]
Then $L_K(E,w)\cong L_K(\tilde E, \tilde w)$. In $(\tilde E,\tilde w)$ there is only one weighted edge, namely $e$, and it is of type $B$. Further $u=\tilde r(e)$ is a sink in $(\tilde E, \tilde w)$.
\end{example}

The next goal is to remove also the weighted edges of type B without changing the wLpa, so that eventually one arrives at an unweighted weighted graph (see Example \ref{exex1}). 

\begin{lemma}\label{5.25}
Suppose $L_K(E,w)$ is locally finite and $(E,w)$ is not unweighted. Then there is a $v\in E^0_w$ such that $T(v)\cap E^0_w=\{v\}$ (i.e. $v$ is the only vertex in $T(v)$ which emits a weighted edge).
\end{lemma}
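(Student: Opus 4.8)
The plan is to use the structural characterisation from Theorem \ref{thmm1} to turn reachability among weighted vertices into a genuine partial order, and then simply pick a minimal element. Since $L_K(E,w)$ is locally finite, Theorem \ref{thmm1} tells us that $(E,w)$ is finite, that no cycle has an exit, and that the weighted part of $(E,w)$ is well-behaved; in particular Condition (iv) of Definition \ref{defwb} holds. Because $(E,w)$ is not unweighted there is an $e\in E_w^1$, and then $s(e)\in E_w^0$, so $E_w^0$ is a finite nonempty set. I would aim to produce a $\geq$-minimal weighted vertex, i.e.\ a $v\in E_w^0$ with the property that $v\geq u$ and $u\in E_w^0$ force $u=v$; such a $v$ satisfies exactly $T(v)\cap E_w^0=\{v\}$.

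The crucial intermediate step, and the one I expect to be the main obstacle, is to show that \emph{no weighted vertex lies on a cycle}. Suppose $v\in E_w^0$ lies on a cycle, which we may base at $v$, say $c=x_1\cdots x_n$ with $s(x_1)=v$. Since no cycle has an exit and $s(x_1)=v$, the vertex $v$ cannot emit any edge other than $x_1$ (otherwise that edge would be an exit for $c$), so $s^{-1}(v)=\{x_1\}$ and hence $w(x_1)=w(v)>1$; thus $x_1\in E_w^1$ and $r(x_1)\in r(E_w^1)\subseteq T(r(E_w^1))$. But $r(x_1)=s(x_2)$ lies on $c$, so rebasing $c$ at $r(x_1)$ (i.e.\ passing to the cyclic rotation $x_2\cdots x_n x_1$) produces a cycle based at a vertex of $T(r(E_w^1))$, contradicting Condition (iv). This settles the claim.

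With the claim in hand, antisymmetry of $\geq$ on $E_w^0$ follows: if $u,v\in E_w^0$ satisfy $u\geq v$, $v\geq u$ and $u\neq v$, then concatenating a nontrivial path $u\to v$ with a nontrivial path $v\to u$ yields a nontrivial closed path based at $u$; a shortest nontrivial closed path based at $u$ is readily checked to be a cycle through $u$, contradicting that $u$ is weighted. Hence $\geq$ restricts to a partial order on the finite nonempty set $E_w^0$, which therefore has a minimal element $v$. For this $v$ we have $T(v)\cap E_w^0=\{v\}$, as required. The only slightly delicate points are the reduction in the second paragraph (using the no-exit hypothesis to pin down $s^{-1}(v)$ and then invoking Condition (iv)) and the routine verification that a minimal nontrivial closed path is a cycle.
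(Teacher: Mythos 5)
Your proof is correct, and it follows the same overall skeleton as the paper's: show that the preorder $\geq$ restricts to a partial order on the finite nonempty set $E_w^0$, then take a minimal element $v$, which automatically satisfies $T(v)\cap E_w^0=\{v\}$. Where you differ is in the key tool used to rule out closed paths through weighted vertices. The paper does this in one line via Lemma \ref{lem3.1}: a closed path $e^{(1)}\dots e^{(n)}$ based at $u\in E_w^0$ lifts to the nod$^2$-path $e^{(1)}_1\dots e^{(n)}_1$ in $\hat E$, which by that lemma must be unweighted and super-special, forcing $w(e^{(1)})=1$ and $s^{-1}(u)=\{e^{(1)}\}$, contradicting $w(u)>1$. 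You instead invoke the already-established structural characterisation, Theorem \ref{thmm1} (i)$\Rightarrow$(ii), and argue purely at the level of the weighted graph: the no-exit condition pins down $s^{-1}(v)=\{x_1\}$, hence $x_1\in E_w^1$, and rebasing the cycle at $r(x_1)\in T(r(E_w^1))$ violates Condition (iv); this obliges you to insert the extra (correct) step that a shortest nontrivial closed path based at $u$ is a cycle, since Condition (iv) and the no-exit hypothesis speak of cycles rather than closed paths. There is no circularity, as Theorem \ref{thmm1} is proved before this lemma and does not depend on it. The trade-off: the paper's route is shorter because Lemma \ref{lem3.1} handles closed paths directly, while your route is more modular in that it never touches nod-paths and uses only the graph-theoretic conditions (ii) of Theorem \ref{thmm1}, at the cost of some routine cycle combinatorics.
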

\begin{proof}
Let $\geq$ be the preorder on $E^0$ defined in Definition \ref{deftre}. Assume that there are distinct $u,v\in E^0_w$ such that $u\geq v$ and $v\geq u$. Then there is a closed path $p=e^{(1)}\dots e^{(n)}$ based at $u$. It follows from Lemma \ref{lem3.1} that $w(e^{(1)})=1$ and $s^{-1}(u)=\{e^{(1)}\}$. But this contradicts the assumption that $u\in E^0_w$. Hence the restriction of $\geq$ to $E^0_w$ is a partial order. Since $E^0_w$ is nonempty and finite, it contains a minimal element $v$. Clearly $T(v)\cap E^0_w=\{v\}$. 
\end{proof}

\begin{lemma}\label{5.3}
Suppose $L_K(E,w)$ is locally finite, $E^{1}_w=E^{1}_{w,B}$ and the elements of $r(E^{1}_w)$ are sinks. Let $v\in E^0_w$ such that $T(v)\cap E^0_w=\{v\}$. Then there is a directed graph $E'$ such that $L_K(E_{T(v)}, w_{T(v)})\cong L_K(E')$ via an isomorphism which maps vertices to sums of distinct vertices.
\end{lemma}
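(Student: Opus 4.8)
The plan is to pin down the very rigid shape of $(E_{T(v)},w_{T(v)})$ and then to ``unfold'' its single weighted edge into ordinary edges. Since $T(v)\cap E^0_w=\{v\}$, the only weighted vertex of $(E_{T(v)},w_{T(v)})$ is $v$; by Condition (i) of Definition \ref{defwb} it emits a single weighted edge $e=e^v$, necessarily of weight $n:=w(v)\ge 2$, and, as $e\in E^1_{w,B}$, at least one further (unweighted) edge. I write $s^{-1}(v)=\{e,f^{(1)},\dots,f^{(k)}\}$ with $k\ge 1$ and set $u:=r(e)$, a sink by hypothesis. First I would record two consequences of local finiteness: no cycle passes through $v$ (it emits $\ge 2$ edges, so any such cycle would have an exit, contradicting Corollary \ref{cor3.1}), whence also $r(f^{(l)})\neq v$ and $v$ receives no edge of $E_{T(v)}$; and, crucially, $u$ receives no edge of $E_{T(v)}$ other than $e$. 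The latter is the one genuinely graph-theoretic point: any other edge $g$ into the sink $u$ would, together with a path from $v$ to $s(g)$ (which must begin with some $f^{(l)}$, since $u$ is a sink and hence emits nothing), yield paths $p=e$ and $q$ witnessing a violation of Condition (v) of Definition \ref{defwb}; the degenerate case $s(g)=v$ is handled the same way with $n=1$.

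Next I would write out the defining relations (i), (ii) of Definition \ref{def3} at $v$ and $u$ in the generators $e_i,e_i^*,f^{(l)}_1,(f^{(l)}_1)^*$, using $f^{(l)}_i=0$ for $i\ge 2$. This yields $e_ie_j^*=\delta_{ij}v$ for $2\le i,j\le n$, $\ e_1e_i^*=e_ie_1^*=0$ for $i\ge 2$, $\ e_1e_1^*=v-\sum_l f^{(l)}_1(f^{(l)}_1)^*$, $\ \sum_{i=1}^n e_i^*e_i=u$, $\ (f^{(l)}_1)^*f^{(l')}_1=\delta_{ll'}r(f^{(l)})$ and $e_1^*f^{(l)}_1=(f^{(l)}_1)^*e_1=0$; all remaining relations of $L_K(E_{T(v)},w_{T(v)})$ are ordinary (unweighted) Leavitt path algebra relations at the vertices below $v$. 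The shape of these identities dictates $E'$: replace the sink $u$ by $n$ new vertices $q_1,\dots,q_n$, keep every other vertex and edge of $E_{T(v)}$, turn $e$ into a single unweighted edge $e_1\colon v\to q_1$ (so $q_1$ becomes a sink), and for $2\le i\le n$ adjoin one edge $b_i\colon q_i\to v$ (so each $q_i$ with $i\ge 2$ emits exactly $b_i$). Thus in $E'$ the vertex $v$ emits precisely $e_1,f^{(1)},\dots,f^{(k)}$.

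I would then define $\Phi\colon L_K(E_{T(v)},w_{T(v)})\to L_K(E')$ on generators by fixing every vertex $x\ne u$, sending $u\mapsto q_1+\cdots+q_n$, $\ e_1\mapsto e_1$, $\ e_1^*\mapsto e_1^*$, $\ e_i\mapsto b_i^*$ and $e_i^*\mapsto b_i$ for $i\ge 2$, $\ f^{(l)}_1\mapsto f^{(l)}$, and fixing the edges below $v$. Well-definedness follows from the universal property of the quotient in Definition \ref{def3}: each relation above maps to a valid identity of $L_K(E')$. Concretely, the $(1,1)$-instance of (i) becomes relation (i) at $v$ in $L_K(E')$; the $(i,i)$-instance for $i\ge 2$ becomes $b_i^*b_i=v$, i.e.\ relation (ii) at $q_i$; the off-diagonal instances vanish because the ranges/sources $q_1,\dots,q_n$ satisfy $q_iq_j=\delta_{ij}q_i$; the $(e,e)$-instance of (ii) becomes $e_1^*e_1+\sum_{i\ge 2}b_ib_i^*=q_1+\cdots+q_n=u$ after using relation (ii) at $v$ and relation (i) at each $q_i$; and the mixed relations $e_1^*f^{(l)}_1=0$ and $(f^{(l)}_1)^*f^{(l')}_1=\delta_{ll'}r(f^{(l)})$ become instances of relation (ii) at $v$. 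The map visibly sends vertices to sums of distinct vertices (only $u$ is nontrivial).

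Finally, to obtain an isomorphism I would exhibit the inverse $\Psi\colon L_K(E')\to L_K(E_{T(v)},w_{T(v)})$ reversing these assignments, namely $q_1\mapsto e_1^*e_1$, $\ q_i\mapsto e_i^*e_i$ and $b_i\mapsto e_i^*$, $\ b_i^*\mapsto e_i$ for $i\ge 2$, with all other generators fixed; check by the same bookkeeping that $\Psi$ respects the defining relations of $L_K(E')$, and verify that $\Phi\circ\Psi$ and $\Psi\circ\Phi$ are the identity on generators. I expect the main obstacle to be twofold: establishing the graph-theoretic claim that $u$ has no in-neighbour besides $s(e)$ (this is exactly where Condition (v) of Definition \ref{defwb} is needed, for otherwise the substitution $u\mapsto q_1+\cdots+q_n$ would fail to extend to a graph map), and carefully managing the deliberate asymmetry between the index $i=1$, which remains a forward edge participating in the relation at $v$, and the indices $i\ge 2$, which are reversed, throughout the verification of relations (i) and (ii) in both directions.
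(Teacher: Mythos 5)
Your proof is correct, but it takes a genuinely different route from the paper's. The preliminary structural analysis is the same in both: the paper likewise isolates the unique weighted edge $e$ from $v$ to a sink $u$ (using Condition (i) of Definition \ref{defwb}), groups the remaining unweighted edges out of $v$, and uses Condition (v) with $n=1$ together with the no-exit property (Corollary \ref{cor3.1}) to conclude that $u$ and $v$ lie outside $T(X)$ — exactly your claims that $v$ receives nothing and $u$ receives only $e$. Where you diverge is the construction of $E'$ and the isomorphism. The paper ``stretches'' the weighted graph in the forward direction: the weight-$k$ edge $e$ becomes a path of length $k$, the $n_i$ parallel unweighted edges into $x_i$ become a single path of length $n_i$, and a new path of length $(k-1)n_i$ is appended after each $x_i$, with the isomorphism (cited from the proof of Lemma 37 of the GK-dimension paper) sending \emph{both} $u$ and $v$ to sums of several vertices. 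You instead split only the sink $u$ into $n=w(e)$ vertices $q_1,\dots,q_n$, keep $e_1$ as an edge $v\to q_1$, and reverse the higher-index copies into edges $b_i\colon q_i\to v$; this is more economical (it adds far fewer vertices and edges), it is self-contained (explicit mutually inverse homomorphisms rather than a citation), and it makes transparent that relations (i) and (ii) of Definition \ref{def3} at $v$ say precisely that each $e_i$, $i\ge 2$, is a partial isometry implementing $v\sim e_i^*e_i$, which is literally the Leavitt path algebra of an edge $q_i\to v$. I checked your relation bookkeeping (including the $(e,e)$-instance of (ii) turning into $e_1^*e_1+\sum_{i\ge2}b_ib_i^*=q_1+\dots+q_n$, and the inverse assignments $q_1\mapsto e_1^*e_1$, $q_i\mapsto e_i^*e_i$, $b_i\mapsto e_i^*$) and it is sound; in small examples both constructions give the same algebra, e.g.\ $M_3(K)\oplus M_3(K)$ for weight $2$ and one extra unweighted edge to a sink. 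One caveat worth noting, though it does not affect the correctness of your proof of the lemma as stated: the paper's proof of Proposition \ref{5.5} refers verbatim to the named vertices $u_i$, $u_{ij}$, $v_{ij}$ and the map $\psi$ from the paper's $E'$ (Cases 1.1--1.4), so substituting your $E'$ — in which $v$ acquires incoming edges $b_i$ and the sink pattern is different — would require reworking that downstream verification.
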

\begin{proof}
Clearly there is an integer $k\geq 2$, integers $m, n_1,\dots,n_m\geq 1$, a vertex $u\in E^0\setminus\{v\}$, pairwise distinct vertices $x_1,\dots,x_m\in E^0\setminus\{v\}$ and pairwise distinct edges $e, f^{(ij)}\in E^1~(1\leq i \leq m, 1\leq j\leq n_i)$ such that $s^{-1}(v)=\{e, f^{(ij)}\mid 1\leq i \leq m, 1\leq j\leq n_i\}$, $r(e)=u$, $r(f^{(ij)})=x_i$, $w(e)=k$ and $w(f^{(ij)})=1$. Set $X:=\{x_1,\dots,x_m\}$. Since no cycle in $(E,w)$ has an exit, $v\not\in T(X)$. Because of Condition (v) in Definition \ref{defwb} (applied with $n=1$), $u\not \in T(X)$. The picture below illustrates the weighted graph $(E_{T(v)}, w_{T(v)})$.
\[\xymatrix@C+25pt@R+25pt{
&&x_1&&
\save "1,3"."3,5"*+<1pc>!<-0.15pc,-0.5pc>\frm{-}\\
(E_{T(v)}, w_{T(v)}):\quad u&v\ar[l]_(0.30){e,k}\ar@/^1pc/[ur]^{f^{(11)}}\ar@{..}[ur]\ar@/_1pc/[ur]_(.33){f^{(1n_1)}}\ar@/^1pc/[dr]^(.33){f^{(m1)}}\ar@{..}[dr]\ar@/_1pc/[dr]_{f^{(mn_m)}}&\vdots&E_{T(X)}&\\
&&x_m&&
\restore}\]
Define a directed graph $E'$ by 
\begin{align*}
(E')^0:=&\{u_i~(1\leq i\leq k),u_{ij}~(1\leq i\leq m, 1\leq j \leq (k-1)n_i),v, v_{ij}~(1\leq i\leq m, 2\leq j \leq n_i)\}\sqcup T(X),\\
(E')^1:=&\{\alpha^{(i)}~(1\leq i\leq k),\beta^{(ij)}~(1\leq i\leq m, 1\leq j \leq n_i),\gamma^{(ij)}~(1\leq i\leq m, 1\leq j \leq (k-1)n_i)\}\sqcup s^{-1}(T(X)),\\
s'(\alpha^{(1)})=&v,~r'(\alpha^{(1)})=u_1, ~s'(\alpha^{(i)})=u_{i-1},~r'(\alpha^{(i)})=u_i~(i\geq 2),\\
s'(\beta^{(i1)})=&v,~r'(\beta^{(i1)})=v_{i2},~ s'(\beta^{(ij)})=v_{ij},~r'(\beta^{(ij)})=v_{i,j+1}~(1<j<n_i),~s'(\beta^{(in_i)})=v_{in_i},~r'(\beta^{(in_i)})=x_{i},\\
s'(\gamma^{(i1)})=&x_i,~r'(\gamma^{(i1)})=u_{i1}, ~s'(\gamma^{(ij)})=u_{i,j-1},~r'(\gamma^{(ij)})=u_{ij}~(j\geq 2),\\
s'(g)=&\begin{cases}s(g), \text{ if } s(g)\in T(X)\setminus X, \\ u_{i,(k-1)n_i}, \text{ if } s(g)=x_i,\end{cases},~r'(g)=\begin{cases}r(g), \text{ if } r(g)\in T(X)\setminus X, \\ u_{i,(k-1)n_i}, \text{ if } r(g)=x_i,\end{cases}~(g\in E^1, s(g)\in T(X)).
\end{align*}
The next picture illustrates $E'$. We set $p_{\alpha}:=\alpha^{(1)}\dots\alpha^{(k)}$, $p^{(i)}_{\beta}:=\beta^{(i1)}\dots\beta^{(in_i)}~(1\leq i\leq m)$ and $p^{(i)}_{\gamma}:=\gamma^{(i1)}\dots\gamma^{(i,(k-1)n_i)}~(1\leq i\leq m)$. By $E_{T(X)}^{x_i\leftrightarrow u_{(k-1)n_i}}$ we denote the directed graph one gets if one replaces $x_i$ by $u_{(k-1)n_i}$ in $E_{T(X)}$ for any $1\leq i \leq m$; in $E_{T(X)}^{x_i\leftrightarrow u_{(k-1)n_i}}$ the vertex $u_{(k-1)n_i}$ emits (resp. receives) the same edges that $x_i$ emits (resp. receives) in $E_{T(X)}$.
\[\xymatrix@C+10pt@R-0pt{
&&&u_{(k-1)n_1}&&
\save "1,4"."5,6"*+<1pc>!<-0.15pc,-0.5pc>\frm{-}\\
&&x_1\ar@{..>}[ur]^(.40){p^{(1)}_{\gamma}}&&&\\
E':\quad u_k&v\ar@{..>}[l]_(.30){p_{\alpha}}\ar@{..>}[ur]^{p^{(1)}_{\beta}}\ar@{..>}[dr]_{p^{(m)}_{\beta}}&&\vdots&E_{T(X)}^{x_i\leftrightarrow u_{(k-1)n_i}}&\\
&&x_m\ar@{..>}[dr]_(.40){p^{(m)}_{\gamma}}&&&\\
&&&u_{(k-1)n_m}&&
\restore
}
\]
There is an algebra isomorphism $\phi:L_K(E_{T(v)}, w_{T(v)})\rightarrow L_K(E')$ such that 
\begin{align*}
\phi(u)=\sum u_i+\sum u_{ij},~\phi(v)&=v+\sum v_{ij}\text{ and }\phi(y)=y~(y\in T(X)),
\end{align*}
cf. \cite[Proof of Lemma 37]{preusser}.
\end{proof}

Next we want to show that we can ``replace" the subgraph $(E_{T(v)}, w_{T(v)})$ in the previous lemma by the unweighted graph $(E', w')$ within $(E,w)$ without changing the wLpa. 
\begin{definition}[{\sc Replacement graph}]
Let $H\subseteq E^0$ a hereditary subset, $E'$ be a directed graph and $\phi:L_K(E_H,w_H)\rightarrow L_K(E')$ an isomorphism which maps vertices to sums of distinct vertices, i.e. for any $v\in H$ there are distinct $u'_{v,1},\dots,u'_{v,n_v}\in (E')^0$ such that $\phi(v)=u'_{v,1}+\dots+u'_{v,n_v}$. The weighted graph $(\tilde E, \tilde w)$ defined by   
\begin{align*}
&\tilde E^0=E^0\setminus H\sqcup (E')^0,\\
&\tilde E^{1}=\{e\mid e\in E^{1}, s(e),r(e)\in E^0\setminus H\}\\
&\quad\quad\sqcup\{e^{(1)},\dots,e^{(n_{r(e)})}\mid e\in E^{1}, s(e)\in E^0\setminus H,r(e)\in H\}\\
&\quad\quad\sqcup (E')^{1},\\
&\tilde s(e)=s(e),\tilde r(e)=r(e),\tilde w(e)= w(e)\quad(e\in E^{1}, s(e),r(e)\in E^0\setminus H),\\
&\tilde s(e^{(j)})=s(e),\tilde r(e^{(j)})=u'_{r(e),j},\tilde w(e^{(j)})= w(e)\quad(e\in E^{1}, s(e)\in E^0\setminus H,r(e)\in H ,1\leq j\leq n_{r(e)}),\\
&\tilde s(e')=s'(e'),\tilde r(e')=r'(e'),\tilde w(e')=1\quad(e'\in (E')^{1})
\end{align*}
is called the {\it replacement graph defined by $\phi$}.
\end{definition}
\begin{replacement lemma}\label{5.4}
Let $H\subseteq E^0$ be a hereditary subset, $E'$ a directed graph and $\phi:L_K(E_H,w_H)\rightarrow L_K(E')$ an isomorphism which maps vertices to sums of distinct vertices. Then $L_K(E,w)\cong L_K(\tilde E, \tilde w)$ where $(\tilde E, \tilde w)$ is the replacement graph defined by $\phi$.
\end{replacement lemma}
\begin{proof}
See \cite[Proof of Lemma 44]{preusser}.
\end{proof}
\begin{example}\label{ex5.2}
Suppose $(E,w)$ is the weighted graph 
\[
\xymatrix@C+15pt{
&&&z\ar@(ul,ur)^{m}&&&\\
a \ar[r]^{k}& u& v\ar[l]_{e,2}\ar@/^1.7pc/[r]^{f}\ar@/_1.7pc/[r]_{g}& x\ar[u]^{l}\ar[r]^{h}& y& b\ar@/^1.7pc/[l]^{i^{(2)}}\ar@/_1.7pc/[l]_{i^{(1)}}& c\ar[l]_{j}~.}
\]
Then $(E_{T(v)}, w_{T(v)})$ is the weighted graph
\[
\xymatrix@C+15pt{
&&z\ar@(ul,ur)^{m}&\\
u& v\ar[l]_{e,2}\ar@/^1.7pc/[r]^{f}\ar@/_1.7pc/[r]_{g}& x\ar[r]^{h}\ar[u]^{l}& y~.}
\]
Let $E'$ be the directed graph
\[
\xymatrix@C+15pt{
&&&&&&z\ar@(ul,ur)^{m}&\\
u_2&u_1\ar[l]_{\alpha^{(2)}}&v\ar[l]_{\alpha^{(1)}}\ar[r]^{\beta^{(11)}}&v_{12}\ar[r]^{\beta^{(12)}}&x\ar[r]^{\gamma^{(11)}}&u_{11}\ar[r]^{\gamma^{(12)}}&u_{12}\ar[u]^{l}\ar[r]^{h}&y~.}
\]
Then $L_K(E_{T(v)}, w_{T(v)})\cong L_K(E')$ by Lemma \ref{5.3}. Let $\phi$ be the isomorphism mentioned in the proof of Lemma \ref{5.3}. Then the replacement graph defined by $\phi$ is the unweighted weighted graph 
\[
\quad
\xymatrix@C+15pt{&&&&&&z\ar@(ul,ur)^{m}&\\
(\tilde E, \tilde w):\quad u_2&u_1\ar[l]_{\hspace{1.2cm}\alpha^{(2)}}&v\ar[l]_{\hspace{0.5cm}\alpha^{(1)}}\ar[r]^{\beta^{(11)}}&v_{12}\ar[r]^{\beta^{(12)}}&x\ar[r]^{\gamma^{(11)}}&u_{11}\ar[r]^{\gamma^{(12)}}&u_{12}\ar[u]^{l}\ar[r]^{h}&y~.\\
&&&a\ar[lllu]^{k^{(1)}}\ar[llu]_{k^{(2)}}\ar[rru]^{k^{(3)}}\ar[rrru]_{k^{(4)}}&&&&b\ar@/^1.7pc/[u]^{i^{(2)}}\ar@/_1.7pc/[u]_{i^{(1)}}\\
&&&&&&&c\ar[u]_{j}}
\]
By the Replacement Lemma we have $L_K(E,w)\cong L_K(\tilde E, \tilde w)$.
\end{example}

\begin{proposition}\label{5.5}
Suppose $L_K(E,w)$ is locally finite, $E^{1}_w=E^{1}_{w,B}$ and the elements of $r(E^{1}_w)$ are sinks. If $(E,w)$ is not unweighted, then there is a weighted graph $(\tilde E, \tilde w)$ such that $|\tilde E^1_w|=|E^1_w|-1$, $L_K(\tilde E,\tilde w)\cong L_K(E,w)$ and $L_K(\tilde E,\tilde w)$ is locally finite.
\end{proposition}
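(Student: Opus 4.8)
The plan is to combine Lemma \ref{5.25}, Lemma \ref{5.3} and the Replacement Lemma \ref{5.4} to produce $(\tilde E,\tilde w)$, and then to transfer local finiteness through the resulting algebra isomorphism by way of the Gelfand--Kirillov dimension. First I would apply Lemma \ref{5.25}: since $L_K(E,w)$ is locally finite and $(E,w)$ is not unweighted, there is a vertex $v\in E^0_w$ with $T(v)\cap E^0_w=\{v\}$. Put $H:=T(v)$; this is a hereditary subset. The standing hypotheses $E^1_w=E^1_{w,B}$ and ``$r(E^1_w)$ consists of sinks'' are exactly the ones required by Lemma \ref{5.3}, so that lemma supplies a finite directed graph $E'$ together with an isomorphism $\phi:L_K(E_H,w_H)\to L_K(E')$ that sends vertices to sums of distinct vertices. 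Letting $(\tilde E,\tilde w)$ be the replacement graph defined by $\phi$, the Replacement Lemma \ref{5.4} gives $L_K(E,w)\cong L_K(\tilde E,\tilde w)$.

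For the local finiteness of $L_K(\tilde E,\tilde w)$ I would avoid re-checking Conditions (i)--(v) of Definition \ref{defwb} and instead route through Theorem \ref{thmm1}(iii). The graph $(\tilde E,\tilde w)$ is finite because $E$ is finite and the graph $E'$ produced by Lemma \ref{5.3} is finite. Since $L_K(E,w)$ is locally finite, Theorem \ref{thmm1} yields $\GKdim L_K(E,w)\leq 1$, and because the Gelfand--Kirillov dimension depends only on the $K$-algebra we get $\GKdim L_K(\tilde E,\tilde w)=\GKdim L_K(E,w)\leq 1$. Theorem \ref{thmm1}, now in the direction (iii)$\Rightarrow$(i), then shows that $L_K(\tilde E,\tilde w)$ is locally finite.

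The step I expect to be the real obstacle is the bookkeeping behind $|\tilde E^1_w|=|E^1_w|-1$. The idea is that $v$ is the only weighted vertex of $H=T(v)$ and, by Condition (i) of Definition \ref{defwb}, emits a single weighted edge, namely the edge $e$ of Lemma \ref{5.3}; hence every weighted edge of $(E,w)$ other than $e$ has its source in $E^0\setminus H$. In passing to $(\tilde E,\tilde w)$ the edges coming from $E'$ are all unweighted, a weighted edge $g$ with $s(g),r(g)\in E^0\setminus H$ keeps its weight and contributes one weighted edge, and a weighted edge $g$ with $s(g)\in E^0\setminus H$ and $r(g)\in H$ is split into $n_{r(g)}$ weighted copies, where $n_{r(g)}$ is the number of vertices occurring in $\phi(r(g))$. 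So the only way the count could fail is a split edge with $n_{r(g)}>1$.

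Here the sink hypothesis is decisive. Since $g$ is weighted, $r(g)$ is a sink, and $H=\{v,u\}\cup T(X)$ in the notation of Lemma \ref{5.3}, so $r(g)=u$ or $r(g)$ is a sink lying in $T(X)$. As $\phi$ fixes every vertex of $T(X)$, one has $n_{r(g)}=1$ whenever $r(g)\in T(X)$. The remaining case $r(g)=u$ is ruled out: $u$ is a sink while the source of any weighted edge is not a sink, so Condition (iii) of Definition \ref{defwb}, applied to $e$ and $g$ which both have range $u$, forces $e$ and $g$ to be in line and hence $e=g$, whose source $v$ lies in $H$. Thus every weighted edge of $(E,w)$ with source outside $H$ yields exactly one weighted edge of $(\tilde E,\tilde w)$, while $e$ is absorbed into the unweighted graph $E'$, giving $|\tilde E^1_w|=|E^1_w|-1$. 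Pinning the multiplicity $n_{r(g)}$ down to $1$ in this way --- which is precisely where the hypothesis on $r(E^1_w)$ and Condition (iii) enter --- is the crux of the argument.
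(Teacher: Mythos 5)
Your proof is correct, and its first half coincides with the paper's: you invoke Lemma \ref{5.25}, the construction of $E'$ and $\phi$ from Lemma \ref{5.3}, and the Replacement Lemma \ref{5.4} in exactly the same way, and your bookkeeping for $|\tilde E^1_w|=|E^1_w|-1$ is essentially the paper's own remark that $r(g)\neq u,v$ for every $g\in E^1_w\setminus\{e\}$ (ruled out via the sink hypothesis and Condition (iii) of Definition \ref{defwb}), combined with the observation that $\phi$ fixes the vertices of $T(X)$, so no weighted edge acquires more than one copy. Where you genuinely diverge is the transfer of local finiteness. The paper re-verifies condition (ii) of Theorem \ref{thmm1} for $(\tilde E,\tilde w)$ from scratch --- no cycle has an exit, plus Conditions (i)--(v) of Definition \ref{defwb} --- which occupies a long case analysis. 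You instead exploit that local finiteness, although defined through the standard grading (which the isomorphism $L_K(E,w)\cong L_K(\tilde E,\tilde w)$ need not respect), is equivalent by Theorem \ref{thmm1} to the grading-free condition ``finite graph and $\GKdim\leq 1$''; since GK dimension is an isomorphism invariant of $K$-algebras and $(\tilde E,\tilde w)$ is visibly finite, local finiteness of $L_K(\tilde E,\tilde w)$ follows from the implication (iii)$\Rightarrow$(i). This is legitimate and substantially shorter: Theorem \ref{thmm1} is proved in Section 4, before Proposition \ref{5.5}, so there is no circularity. What the paper's longer route buys is explicit graph-theoretic information about $(\tilde E,\tilde w)$ (no cycle with an exit, well-behaved weighted part), which can be convenient when the proposition is applied iteratively as in Theorem \ref{thm5}; your route yields only the abstract conclusion. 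One small point of rigor: the property that $\phi$ fixes every vertex of $T(X)$, which you need to pin $n_{r(g)}$ down to $1$, is not contained in the statement of Lemma \ref{5.3} (which only promises an isomorphism sending vertices to sums of distinct vertices), but in its proof; you should cite the explicit $\phi$ constructed there, exactly as the paper does.
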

\begin{proof}
By Lemma \ref{5.25} there is a $v\in E^0_w$ such that $T(v)\cap E^0_w=\{v\}$. Let $E'$ be the directed graph defined in the proof of Lemma \ref{5.3}. Let $\phi:L_K(E_{T(v)}, w_{T(v)})\rightarrow L_K(E')$ be an isomorphism such that 
\begin{align*}
\phi(u)=\sum u_i+\sum u_{ij},~\phi(v)&=v+\sum v_{ij}\text{ and }\phi(y)=y~(y\in T(X))
\end{align*}
(such a $\phi$ exists, see the proof of Lemma \ref{5.3}). Let $(\tilde E, \tilde w)$ be the replacement graph defined by $\phi$. Then clearly $|\tilde E^1_w|=|E^1_w|-1$ (note that $r(g)\neq u,v$ for any $g\in E^1_w\setminus\{e\}$, see (i) below). By the Replacement Lemma \ref{5.4}, $L_K(E,w)\cong L_K(\tilde E, \tilde w)$. It remains to show that $L_K(\tilde E,\tilde w)$ is locally finite. By Theorem \ref{thmm1} it suffices to show that $(\tilde E,\tilde w)$ is finite, no cycle in $(\tilde E,\tilde w)$ has an exit and the weighted part of $(\tilde E,\tilde w)$ is well-behaved.\\
Clearly $(\tilde E,\tilde w)$ is finite. Next we show that no cycle in $(\tilde E,\tilde w)$ has an exit. Let $\tilde c=x_1\dots x_n$ be a cycle in $(\tilde E, \tilde w)$ and $y\in \tilde E^1$ an exit for $\tilde c$. Set 
\[\tilde E^1_{(T(v))^c\rightarrow (T(v))^c}:=\{g\mid g\in E^{1}, s(g),r(g)\in E^0\setminus T(v)\}\]
and 
\[\tilde E^1_{(T(v))^c\rightarrow (E')^0}:=\{g^{(1)},\dots,g^{(n_{r(g)})}\mid g\in E^{1}, s(g)\in E^0\setminus T(v),r(e)\in T(v)\}.\]
Then $\tilde E^1=\tilde E^1_{(T(v))^c\rightarrow (T(v))^c}\sqcup \tilde E^1_{(T(v))^c\rightarrow (E')^0}\sqcup (E')^1$. Clearly $x_1,\dots ,x_n\in \tilde E^1_{(T(v))^c\rightarrow (T(v))^c}$ or $x_1,\dots ,x_n\in (E')^1$ (note that $(E')^0$ equals the tree of $v$ in $(\tilde E,\tilde w)$ and hence is a hereditary subset of $\tilde E^0$). First assume that $x_1,\dots ,x_n\in \tilde E^1_{(T(v))^c\rightarrow (T(v))^c}$. It follows that $\tilde c$ is a cycle in $(E,w)$. Define a map $\psi: \tilde E^1_{(T(v))^c\rightarrow (E')^0}\rightarrow E^1$ by $\psi(g^{(i)})=g$. Clearly either $y\in \tilde E^1_{(T(v))^c\rightarrow (T(v))^c}$ and $y$ is an exit for $\tilde c$ in $(E,w)$ or $y\in \tilde E^1_{(T(v))^c\rightarrow (E')^0}$ and $\psi(y)$ is an exit for $\tilde c$ in $(E,w)$. But this yields a contradiction since no cycle in $(E,w)$ has an exit. Assume now that $x_1,\dots ,x_n\in (E')^1$. Then clearly $x_1, \dots, x_n,y\in s^{-1}(T(X))$. It follows that $\tilde c$ is a cycle in $(E,w)$ and $y$ is an exit for $\tilde c$. But this yields a contradiction since no cycle in $(E,w)$ has an exit.\\
It remains to show that the weighted part of $(\tilde E,\tilde w)$ is well-behaved, i.e. that Conditions (i)-(v) in Definition \ref{defwb} are satisfied.
\begin{enumerate}[(i)]
\item It clearly suffices to show $r(g)\neq u,v$ for any $g\in E^1_w\setminus\{e\}$ (and hence no weighted edge has more than one copy in the replacement graph $(\tilde E, \tilde w)$). Let $g\in E^1_w$. Then $r(g)\neq v$ since $r(g)$ is a sink. Assume now that $r(g)=u= r(e)$. It follows that $g$ and $e$ are in line since $(E,w)$ satisfies Condition (iii) in Definition \ref{defwb}. Since the ranges of $g$ and $e$ are sinks, it follows that $g=e$. Thus $r(g)\neq u,v$ for any $g\in E^1_w\setminus\{e\}$.
\item Let $\tilde g\in \tilde E^1_w$. We have to show that no vertex in $\tilde T(\tilde r(\tilde g))$ emits two distinct edges in $(\tilde E, \tilde w)$ (if $z\in \tilde E^0$, then we denote by $\tilde T(z)$ the tree of $z$ in $(\tilde E, \tilde w)$). Clearly either $\tilde g\in \tilde E^1_{(T(v))^c\rightarrow (T(v))^c}$ or $\tilde g\in\tilde E^1_{(T(v))^c\rightarrow (E')^0}$. Assume first that $\tilde g\in \tilde E^1_{(T(v))^c\rightarrow (T(v))^c}$. Then clearly $\tilde r(\tilde g)=r(\tilde g)$ is a sink in $(\tilde E,\tilde w)$ since it is a sink in $(E,w)$. Hence no vertex in $\tilde T(\tilde r(\tilde g))$ emits an edge in $(\tilde E, \tilde w)$. Assume now that $\tilde g\in\tilde E^1_{(T(v))^c\rightarrow (E')^0}$. One checks easily that $\tilde r(\tilde g)$ is a sink in $(\tilde E,\tilde w)$ if $\tilde r(\tilde g)\not \in X$. If $\tilde r(\tilde g)=x_i$ for some $1\leq i \leq m$, then $\tilde T(\tilde r(\tilde g))=\{x_i,u_{i1},\dots,u_{i, (k-1)n_i}\}$. While $x_i,u_{i1},\dots,u_{i, (k-1)n_i-1}$ emit precisely one edge in $(\tilde E,\tilde w)$, the vertex $u_{i, (k-1)n_i}$ is a sink since $x_i$ is a sink in $(E,w)$.
\item Assume that there are $\tilde g,\tilde h\in \tilde E^1_w$ such that $\tilde g$ and $\tilde h$ are not in line in $(\tilde E, \tilde w)$ (in particular $\tilde g\neq \tilde h$) and $\tilde T(\tilde r(\tilde g))\cap \tilde T(\tilde r(\tilde h))\neq \emptyset$. Clearly either $\tilde g,\tilde h\in \tilde E^1_{(T(v))^c\rightarrow (T(v))^c}$ or $\tilde g,\tilde h\in\tilde E^1_{(T(v))^c\rightarrow (E')^0}$. First assume that $\tilde g,\tilde h\in \tilde E^1_{(T(v))^c\rightarrow (T(v))^c}$. Then $\tilde r(\tilde g)$ and $\tilde r(\tilde h)$ are sinks in $(\tilde E,\tilde w)$. Hence $r(\tilde g)=\tilde r(\tilde g)=\tilde r(\tilde h)=r(\tilde h)$. Since $(E,w)$ satisfies Condition (iii) in Definition \ref{defwb}, it follows that $\tilde g$ and $\tilde h$ are in line in $(E,w)$. But that cannot hold since $\tilde g\neq \tilde h$ and $r(\tilde g)$ and $r(\tilde h)$ are sinks. Assume now that $\tilde g,\tilde h\in\tilde E^1_{(T(v))^c\rightarrow (E')^0}$. Then it is easy to see that $r(\psi(\tilde g))=\tilde r(\tilde g)=\tilde r(\tilde h)=r(\psi(\tilde h))$. Since $(E,w)$ satisfies Condition (iii) in Definition \ref{defwb}, it follows that $\psi(\tilde g)$ and $\psi(\tilde h)$ are in line in $(E,w)$. But that cannot hold since clearly $\psi(\tilde g)\neq \psi(\tilde h)$ and $r(\psi(\tilde g))$ and $r(\psi(\tilde h))$ are sinks.
\item Let $\tilde g\in \tilde E^1_w$. We have to show that no cycle in $(\tilde E, \tilde w)$ is based at a vertex in $\tilde T(\tilde r(\tilde g))$. This is obvious if $\tilde r(\tilde g)\not \in X$ since then $\tilde r(\tilde g)$ is a sink (see (ii) above). Assume now that $\tilde r(\tilde g)=x_i$ for some $1\leq i \leq m$. Then $\tilde T(\tilde r(\tilde g))=\{x_i,u_{i1},\dots,u_{i, (k-1)n_i}\}$. Obviously no cycle in $(\tilde E, \tilde w)$ is based in one of the vertices $x_i,u_{i1},\dots,u_{i, (k-1)n_i}$ (note that $u_{i, (k-1)n_i}$ is a sink since $x_i$ is a sink in $(E,w)$).
\item Assume there is an $n\geq 1$ and paths $\tilde p_1,\dots,\tilde p_n,\tilde q_1,\dots,\tilde q_n$ in $(\tilde E,\tilde w)$ such that $\tilde r(\tilde p_i)=\tilde r(\tilde q_i)~(1\leq i\leq n)$, $\tilde s(\tilde p_1)=\tilde s(\tilde q_n)$, $\tilde s(\tilde p_i)=\tilde s(\tilde q_{i-1})~(2\leq i \leq n)$ and for any $1\leq i \leq n$, the first letter of $\tilde p_i$ is a weighted edge, the first letter of $\tilde q_i$ is an unweighted edge and the last letters of $\tilde p_i$ and $\tilde q_i$ are distinct. In order to get a contradiction it suffices to show that for any $1\leq i \leq n$, 
\begin{enumerate}[(a)]
\item there are paths $p_i$ and $q_i$ in $(E,w)$ such that $s(p_i)=\tilde s(\tilde p_i)$, $r(p_i)=r(q_i)$,  $s(q_i)=\tilde s(\tilde q_i)$, the first letter of $p_i$ is a weighted edge, the first letter of $q_i$ is an unweighted edge and the last letters of $p_i$ and $q_i$ are distinct or
\item there are paths $p_i$, $q_i$, $p'_i$ and $q'_i$ in $(E,w)$ such that $s(p_i)=\tilde s(\tilde p_i)$, $r(p_i)=r(q_i)$,  $s(q_i)=s(p'_i)$, $r(p'_i)=r(q'_i)$, $s(q'_i)=\tilde s(\tilde q_i)$, the first letters of $p_i$ and $p'_i$ are weighted edges, the first letters of $q_i$ and $q'_i$ are unweighted edges, the last letters of $p_i$ and $q_i$ are distinct and the last letters of $p'_i$ and $q'_i$ are distinct
\end{enumerate}
since $(E,w)$ satisfies Condition (v) in Definition \ref{defwb}. Clearly $\tilde s(\tilde p_i)\in E^0\setminus T(v)$ for any $1\leq i\leq n$ since the first letters of the $\tilde p_i$'s are weighted edges. Hence $\tilde s(\tilde q_i)\in E^0\setminus T(v)$ for any $1\leq i\leq n$. Now fix an $i\in \{1,\dots,n\}$. If $\tilde r(\tilde p_i)=\tilde r(\tilde q_i)\in E^0\setminus T(v)$, then $p_i:=\tilde p_i$ and $q_i:=\tilde q_i$ are paths in $(E,w)$ which have the properties in (a) above (note that $(E')^0$ is hereditary in $(\tilde E, \tilde w)$). Suppose now that $\tilde r(\tilde p_i)=\tilde r(\tilde q_i)\in (E')^0$. Let $\tilde g$ be the first letter of $\tilde p_i$. Then $\tilde g\in\tilde E^1_{(T(v))^c\rightarrow (E')^0}$. Hence there is a $g\in E^1_w\setminus\{e\}$ such that $r(g)\in T(X)$ and $\tilde g=g^{(1)}$ (note that $r(g)\neq u,v$, see (i) above). \\
\\
\underline{Case 1} Assume that $r(g)=x_{i'}$ for some $1\leq i'\leq m$. Write $\tilde q_i=\tilde h^{(1)}\dots \tilde h^{(l)}$ where $\tilde h^{(1)},\dots, \tilde h^{(l)}\in \tilde E^1$. Let $t$ be maximal with the property that $\tilde s(\tilde h^{(t)})\in E^0\setminus T(v)$. Then $\tilde h^{(1)}, \dots, \tilde  h^{(t-1)} \in \tilde E^1_{(T(v))^c\rightarrow (T(v))^c}$, $\tilde h^{(t)}\in \tilde E^1_{(T(v))^c\rightarrow (E')^0}$ and $\tilde h^{(t+1)}, \dots, \tilde h^{(l)}\in (E')^1$. We distinguish four subcases.\\
\\
\underline{Case 1.1} Assume that $\tilde r(\tilde h^{(t)})\in\{u_{i''}~(1\leq i''\leq k),u_{i''j}~(1\leq i''\leq m, 1\leq j \leq (k-1)n_{i''})\}$. Set $p_i:=g$, $q_i=f^{(i'1)}$, $p_i':=e$ and $q'_i:=\tilde h^{(1)} \dots \tilde h^{(t-1)}\psi(\tilde h^{(t)})$. One checks easily that $p_i$, $q_i$, $p'_i$ and $q'_i$ have the properties in (b) above.\\
\\
\underline{Case 1.2} Assume that $\tilde r(\tilde h^{(t)})\in\{v, v_{i''j}~(1\leq i''\leq m, 2\leq j \leq n_{i''})\}$. Set $p_i:=g$ and $q_i=\tilde h^{(1)} \dots \tilde h^{(t-1)}\psi(\tilde h^{(t)})f^{(i'1)}$. One checks easily that $p_i$ and $q_i$ have the properties in (a) above. \\
\\
\underline{Case 1.3} Assume that $\tilde r(\tilde h^{(t)})=x_{i'}$. Set $p_i:=g$ and $q_i=\tilde h^{(1)} \dots \tilde h^{(t-1)}\psi(\tilde h^{(t)})$. One checks easily that $p_i$ and $q_i$ have the properties in (a) above.\\
\\
\underline{Case 1.4} Assume that $\tilde r(\tilde h^{(t)})\in T(X)\setminus X$. Then clearly $\tilde r(\tilde p_i)=\tilde r(\tilde q_i)=u^{(i',(k-1)n_{i'})}$. Set $p_i:=g$ and $q_i=\tilde h^{(1)} \dots \tilde h^{(t-1)}\psi(\tilde h^{(t)})\tilde h^{(t+1)} \dots \tilde h^{(l)}$. One checks easily that $p_i$ and $q_i$ have the properties in (a) above.\\
\\
\underline{Case 2} Assume that $r(g)\in T(X)\setminus X$. This case is similar to Case 1 and is left to the reader.
\end{enumerate}
\end{proof}

Now we are ready to prove the main result of this section.
\begin{theorem}\label{thm5}
If $L_K(E,w)$ is locally finite, then it is isomorphic to a locally finite Leavitt path algebra. It follows that $L_K(E,w)\cong(\bigoplus\limits_{i=1}^{l} M_{m_i}(K))\oplus(\bigoplus\limits_{j=1}^{l'} M_{n_j}(K[x,x^{-1}]))$ for some integers $l,l'\geq 0$ and $m_i,n_j\geq 1$. Hence $L_K(E,w)$ is Noetherian.
\end{theorem}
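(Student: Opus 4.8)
The plan is to prove the first assertion --- that a locally finite $L_K(E,w)$ is isomorphic to a locally finite Leavitt path algebra --- by induction on the number $|E^1_w|$ of weighted edges, and then to read off the structural and Noetherian statements from the cited classification of locally finite Lpas. The two genuinely hard steps, namely stripping out the type A weighted edges and then peeling off one type B weighted edge at a time while keeping the isomorphism type of the algebra and preserving local finiteness, have already been carried out in Proposition \ref{5.2} and Proposition \ref{5.5}; the present argument is essentially the bookkeeping that glues them together.

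For the base case $|E^1_w|=0$ the weighted graph $(E,w)$ is unweighted, so by Example \ref{exex1} the algebra $L_K(E,w)$ is graded isomorphic to the ordinary Leavitt path algebra $L_K(E)$; since $L_K(E,w)$ is locally finite and the isomorphism is graded, $L_K(E)$ is a locally finite Lpa and we are done. For the inductive step I would first apply Proposition \ref{5.2} to produce a locally finite weighted graph $(\tilde E,\tilde w)$ with $L_K(\tilde E,\tilde w)\cong L_K(E,w)$, with $\tilde E^1_w=\tilde E^1_{w,B}=E^1_{w,B}$, and with the ranges of the weighted edges being sinks. If $\tilde E^1_w=\emptyset$ then $(\tilde E,\tilde w)$ is unweighted and we finish as in the base case. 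Otherwise the hypotheses of Proposition \ref{5.5} are met verbatim, so I apply it to obtain a locally finite weighted graph $(\hat E,\hat w)$ with $L_K(\hat E,\hat w)\cong L_K(\tilde E,\tilde w)\cong L_K(E,w)$ and
\[
|\hat E^1_w|=|\tilde E^1_w|-1=|E^1_{w,B}|-1\leq |E^1_w|-1<|E^1_w|.
\]
By the induction hypothesis $L_K(\hat E,\hat w)$ is isomorphic to a locally finite Lpa, and hence so is $L_K(E,w)$.

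The point worth emphasising is that the induction hypothesis is phrased only in terms of the number of weighted edges, not in terms of the auxiliary conditions ``all weighted edges are of type B'' and ``their ranges are sinks''. This is exactly what makes the recursion go through cleanly: after applying Proposition \ref{5.5} the intermediate graph $(\hat E,\hat w)$ need not satisfy those structural conditions, but it does have strictly fewer weighted edges, which is all that is needed. Rather than propagating the structural conditions along the induction, Proposition \ref{5.2} is simply re-invoked afresh at the start of each inductive step. I expect this set-up of the induction --- together with checking that each reduction strictly lowers $|E^1_w|$ via the inequality $|E^1_{w,B}|\le |E^1_w|$, and that local finiteness of each new graph is guaranteed directly by the propositions (through Theorem \ref{thmm1}) rather than transported across an ungraded algebra isomorphism --- to be the only real subtlety, the substantive work having been discharged in the earlier propositions.

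Finally, for the remaining assertions: having identified $L_K(E,w)$ up to isomorphism with a locally finite Leavitt path algebra, the decomposition $L_K(E,w)\cong(\bigoplus_{i=1}^{l} M_{m_i}(K))\oplus(\bigoplus_{j=1}^{l'} M_{n_j}(K[x,x^{-1}]))$ is precisely the content of the classification of locally finite Lpas recorded in \cite[Theorem 4.2.17]{abrams-ara-molina}. Noetherianity is then immediate, since each summand is Noetherian --- $M_{m_i}(K)$ is finite-dimensional over $K$, and $M_{n_j}(K[x,x^{-1}])$ is a matrix algebra over the Noetherian ring $K[x,x^{-1}]$ --- and a finite direct sum of Noetherian algebras is Noetherian.
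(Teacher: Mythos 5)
Your proof is correct and takes essentially the same route as the paper: Proposition \ref{5.2} to remove the type A weighted edges, Proposition \ref{5.5} to strip off the type B weighted edges one at a time, and then Example \ref{exex1} together with \cite[Theorem 4.2.17]{abrams-ara-molina} for the matrix-algebra decomposition and Noetherianity. Your explicit induction on $|E^1_w|$, re-invoking Proposition \ref{5.2} at the start of each step and relying on Theorem \ref{thmm1} (rather than the ungraded isomorphisms) to carry local finiteness, merely makes precise the iteration that the paper's one-line proof leaves implicit --- a worthwhile clarification, since Proposition \ref{5.5} alone need not preserve its own hypotheses.
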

\begin{proof}
It follows from Propositions \ref{5.2} and \ref{5.5} that there is an unweighted weigthed graph $(\tilde E,\tilde w)$ such that $L_K(\tilde E,\tilde w)\cong L_K(E,w)$ and $L_K(\tilde E,\tilde w)$ is locally finite. Hence, by Example \ref{exex1} and \cite[Theorem 4.2.17]{abrams-ara-molina}, $L_K(E,w)$ is isomorphic to $(\bigoplus\limits_{i=1}^{l} M_{m_i}(K))\oplus(\bigoplus\limits_{j=1}^{l'} M_{n_j}(K[x,x^{-1}]))$ for some integers $l,l'\geq 0$ and $m_i,n_j\geq 1$. It is well-known that $K[x,x^{-1}]$ is a Noetherian ring and hence so is any finite matrix ring over $K[x,x^{-1}]$. Thus $L_K(E,w)$ is Noetherian.\end{proof}

\begin{example}
Suppose $(E,w)$ is the weighted graph 
\[
\xymatrix@C+15pt{
&&&z\ar@(ul,ur)^{m}&&&\\
a&u\ar[l]_{k}&v\ar[l]_{e,2}\ar@/^1.7pc/[r]^{f}\ar@/_1.7pc/[r]_{g}&x\ar[u]^{l}\ar[r]^{h}&y\ar[r]^{i,2}&b\ar[r]^{j}&c}
\]
from Example \ref{ex5.00}. Let $(\tilde E, \tilde w)$ be the unweighted weighted graph
\[
\quad
\xymatrix@C+15pt{&&&&&&z\ar@(ul,ur)^{m}&\\
(\tilde E, \tilde w):\quad u_2&u_1\ar[l]_{\hspace{1.2cm}\alpha^{(2)}}&v\ar[l]_{\hspace{0.5cm}\alpha^{(1)}}\ar[r]^{\beta^{(11)}}&v_{12}\ar[r]^{\beta^{(12)}}&x\ar[r]^{\gamma^{(11)}}&u_{11}\ar[r]^{\gamma^{(12)}}&u_{12}\ar[u]^{l}\ar[r]^{h}&y~.\\
&&&a\ar[lllu]^{k^{(1)}}\ar[llu]_{k^{(2)}}\ar[rru]^{k^{(3)}}\ar[rrru]_{k^{(4)}}&&&&b\ar@/^1.7pc/[u]^{i^{(2)}}\ar@/_1.7pc/[u]_{i^{(1)}}\\
&&&&&&&c\ar[u]_{j}}
\]
Then $L_K(E,w)\cong L_K(\tilde E, \tilde w)$ by Examples \ref{ex5.00} and \ref{ex5.2}. It follows from \cite[Corollary 4.2.14]{abrams-ara-molina} that $L_K(E,w)\cong M_5(K)\times M_{12}(K)\times M_8(K[x,x^{-1}])$.
\end{example}

\section{Noetherian wLpas are locally finite}
In this section we show that if $L_K(E,w)$ is Noetherian, then it is locally finite. Note that $L_K(E,w)$ is a ring with involution (cf. \cite[Proposition 5.7]{hazrat13}) and hence 
\[L_K(E,w)\text{ is left Noetherian }\Leftrightarrow~L_K(E,w)\text{ is right Noetherian }\Leftrightarrow~L_K(E,w)\text{ is Noetherian}.\]

\begin{definition}[{\sc Lenod-path}]
A {\it left-normal d-path} or {\it lenod-path} is a nod-path $p$ such that the juxtaposition $op$ is a nod-path for any nontrivial nod-path $o$ such that $r(o)=s(p)$.
\end{definition}

If $A, B$ are nonempty words over the same alphabet, then we write $A\sim B$ if $A$ is a suffix of $B$ or $B$ is a suffix of $A$ and $A\not\sim B$ otherwise. If $X\subseteq L_K(E,w)$, then we denote by $LI(X)$ the left ideal of $L_K(E,w)$ generated by $X$.

\begin{lemma}\label{lem4.1}
If $L_K(E,w)$ is Noetherian, then there is no lenod-path $p$ and nod$^2$-path $q$ such that $p\not\sim q$ and $pq$ is a nod-path.
\end{lemma}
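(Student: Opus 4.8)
The plan is to argue by contradiction: assuming a lenod-path $p$ and a nod$^2$-path $q$ with $p\not\sim q$ and $pq$ a nod-path exist, I would manufacture an infinite strictly ascending chain of left ideals, contradicting the ACC recorded in the remark preceding the lemma. First I would record the elementary consequences of the hypotheses: $q$ is nontrivial (since $q^2$ is a nod-path) and closed, so $s(q)=r(q)$; and $p$ is a nontrivial nod-path with $r(p)=s(q)$ (as $pq$ is a nod-path). Because the only forbidden words have length $2$, a concatenation of nod-paths is itself a nod-path as soon as all of its consecutive pairs of letters are non-forbidden; hence from $pq$ and $q^2$ being nod-paths I would deduce that $pq^k$ is a nod-path for every $k\ge 1$. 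I would then set $J_n:=LI(pq,pq^2,\dots,pq^n)$ and aim to prove $pq^{n+1}\notin J_n$ for all $n\ge 1$.

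The role of the lenod hypothesis is that left multiplication by a nod-path never triggers a reduction. Concretely, for a nod-path $o$ one has $o\cdot pq^i=0$ when $r(o)\neq s(p)$, since then $r(o)$ and $s(p)$ are orthogonal vertex idempotents; and when $r(o)=s(p)$ the word $op$ is a nod-path (by the defining property of a lenod-path, or trivially if $o=s(p)$), whence $opq^i$ is again a nod-path by the consecutive-pairs remark, so $o\cdot pq^i=opq^i$ with no cancellation. Expanding an arbitrary element of $J_n$ in the nod-path basis of Theorem \ref{thmhp}, I would conclude that $J_n$ is precisely the $K$-span of the nod-paths $opq^i$ with $1\le i\le n$ and $o$ a (possibly trivial) nod-path satisfying $r(o)=s(p)$. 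Since the nod-paths form a basis, it then suffices to show that $pq^{n+1}$ is not equal to any such word $opq^i$.

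This is the crux, and the only place where $p\not\sim q$ is used. Suppose $pq^{n+1}=opq^i$ as words for some $i\le n$. The trivial case $o=s(p)$ is excluded immediately (it would force $i=n+1$), so $o$ is nontrivial, and comparing lengths gives $|o|=(n+1-i)|q|\ge |q|$. The word equation then forces $pq^i$ to be a suffix of $pq^{n+1}$, equivalently $p$ to be a suffix of $pq^{m}$ with $m:=n+1-i\ge 1$. I would rule this out by hand: treating the cases $|p|\le |q|$, $|q|<|p|\le m|q|$ and $|p|>m|q|$ separately, one checks in each case that such a suffix coincidence forces either $p$ to be a suffix of $q$ or $q$ to be a suffix of $p$, that is $p\sim q$, against the hypothesis. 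Hence no admissible $o,i$ exist and $pq^{n+1}\notin J_n$.

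Consequently $J_1\subsetneq J_2\subsetneq\cdots$ is an infinite strictly ascending chain of left ideals, contradicting the assumption that $L_K(E,w)$ is left Noetherian. I expect the suffix computation of the third paragraph to be the main obstacle; the rest is bookkeeping, but two points deserve care. One must genuinely use that $p$ is a lenod-path, so that prepending \emph{any} nod-path $o$ (with the right range) produces no forbidden subword and the clean basis description of $J_n$ holds; and one must verify that products $o\cdot pq^i$ with $r(o)\neq s(p)$ vanish, so that no extraneous nod-path equal to $pq^{n+1}$ can enter the span.
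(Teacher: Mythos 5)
Your proposal is correct and follows essentially the same route as the paper: both build the ascending chain of left ideals $LI(pq,\dots,pq^n)$, use the lenod property to identify each ideal with the span of the nod-paths $opq^i$ (equivalently, those having some $pq^i$ as a suffix), and derive strictness of the chain from the word-combinatorial fact that $p\not\sim q$ prevents $p$ from being a suffix of $pq^m$. The paper compresses the suffix analysis and the basis description into a ``clearly''; your expanded case analysis on $|p|$ versus $|q|$ and $m|q|$ is exactly the verification it omits.
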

\begin{proof}
Assume that there is a lenod-path $p$ and a nod$^2$-path $q$ such that $p\not\sim q$ and $pq$ is a nod-path. Let $n\geq 0$. Since $p$ is a lenod-path, the left ideal $LI(p,pq,\dots,pq^n)$ equals the linear span of all nod-paths $o$ such that one of the words $p,pq,\dots,pq^n$ is a suffix of $o$. It follows that 
$LI(p,pq,\dots,pq^n)\subsetneq LI(p,pq,\dots,pq^{n+1})$ (clearly none of the words $p,pq,\dots,pq^n$ is a suffix of $pq^{n+1}$ since $p\not\sim q$) and thus $L_K(E,w)$ is not Noetherian.
\end{proof}

\begin{corollary}\label{cor4.1}
If $L_K(E,w)$ is Noetherian, then there is no nod-path $q=x_1\dots x_n$ such that $x_1=e_2$ and $x_n=e_2^*$ for some $e\in E^1_w$.
\end{corollary}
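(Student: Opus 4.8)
The plan is to argue by contradiction using Lemma \ref{lem4.1}. Suppose $L_K(E,w)$ is Noetherian and that, contrary to the claim, there is a nod-path $q=x_1\dots x_n$ with $x_1=e_2$ and $x_n=e_2^{*}$ for some $e\in E^1_w$ (so $w(e)\geq 2$). Since $x_1$ is a real edge and $x_n$ a ghost edge, $n\geq 2$, and $q$ is a closed path based at $s(e)$: indeed $s(q)=s(e_2)=s(e)$ and $r(q)=\hat r_d(e_2^{*})=s(e)$. My goal is to manufacture a lenod-path $p$ and a nod$^2$-path $q'$ with $pq'$ a nod-path and $p\not\sim q'$, which is exactly the situation Lemma \ref{lem4.1} forbids. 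Throughout I will use that every forbidden word has length $2$, so that a concatenation of two nod-paths fails to be a nod-path only if the single length-$2$ word straddling the junction is forbidden.

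First I would check that $q$ is itself a lenod-path. Given a nontrivial nod-path $o$ with $r(o)=s(q)=s(e)$, the product $oq$ is a nod-path precisely when the junction word $y\,e_2$ (where $y$ is the last letter of $o$) is not forbidden. The only forbidden words ending in a real edge are those of the form $f_1^{*}g_1$, whose final real letter has index $1$; since $x_1=e_2$ has index $2$, the word $y\,e_2$ is never forbidden. Hence $oq$ is a nod-path for every admissible $o$, so $q$ is a lenod-path.

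Next I would produce the companion path by lowering the index of the terminal ghost edge: set $q':=x_1\dots x_{n-1}e_1^{*}$, which is well-defined since $w(e)\geq 2$. This is again a path in $\hat E_d$ with the same endpoints as $q$, because $x_{n-1}e_1^{*}$ and $x_{n-1}e_2^{*}$ share the incidence condition $\hat r_d(x_{n-1})=r(e)$, and $\hat r_d(e_1^{*})=s(e)=\hat r_d(e_2^{*})$. It is a nod-path: the only length-$2$ subword that changed is $x_{n-1}e_1^{*}$, and if this were forbidden, it would necessarily be of the form $e^v_i(e^v_1)^{*}$ with $e=e^v$ special and $x_{n-1}=e_i$; but then $x_{n-1}e_2^{*}=e^v_i(e^v_2)^{*}$ would also be forbidden, contradicting that $q$ is a nod-path. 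Moreover $q'$ is a nod$^2$-path, since the only new junction word in $(q')^2$ is $e_1^{*}e_2$, which is not forbidden (a word $f_1^{*}g_1$ needs its real letter to have index $1$, whereas here it is $e_2$).

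It then remains to assemble the contradiction. The product $q\,q'$ is a nod-path: its single new length-$2$ subword is the junction word $x_n\,x_1=e_2^{*}e_2$, which is not forbidden, being ghost-then-real of index $2$. Moreover $q\not\sim q'$, because $q$ and $q'$ have the same length $n$ but differ in their last letters ($e_2^{*}$ versus $e_1^{*}$), so neither is a suffix of the other. Thus the lenod-path $q$ together with the nod$^2$-path $q'$ violates Lemma \ref{lem4.1}, completing the argument. I expect the only delicate point to be the forbidden-word bookkeeping for $q'$ at its modified end and at the two junctions; I would also stress that one cannot simply pair $q$ with one of its own powers, since all powers of $q$ are suffix-comparable, so the index-lowering trick—which keeps the length fixed while altering only the final letter—is precisely what breaks the relation $\sim$.
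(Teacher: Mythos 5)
Your proof is correct and is essentially the paper's own argument: the paper sets $p:=e_2x_2\dots x_{n-1}e_1^*$ (your $q'$), i.e.\ it uses the same index-lowering trick on the terminal ghost edge, and then applies Lemma~\ref{lem4.1}, the only (immaterial) difference being that the paper takes the modified word as the lenod-path and the original $q$ as the nod$^2$-path, whereas you swap these roles. Your forbidden-word bookkeeping supplies exactly the verifications the paper leaves as ``one checks easily.''
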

\begin{proof}
Assume that there is a nod-path $q=x_1\dots x_n$ such that $x_1=e_2$ and $x_n=e_2^*$ for some $e\in E^1_w$. Then clearly $q$ is a nod$^2$-path. Set $p:=e_2x_2\dots x_{n-1}e_1^*$. One checks easily that $p$ is a lenod-path such that $p\not\sim q$ and $pq$ is a nod-path. Thus, by Lemma \ref{lem4.1}, $L_K(E,w)$ is not Noetherian.
\end{proof}

\begin{lemma}\label{lem4.2}
If $L_K(E,w)$ is Noetherian, then there is no nod$^2$-path $p$ based at a vertex $v$ such that $pp^*$ is a nod-path and $p^*p=v$ in $L_K(E,w)$.
\end{lemma}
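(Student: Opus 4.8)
The plan is to prove the contrapositive: assuming a nod$^2$-path $p$ based at $v$ with $pp^*$ a nod-path and $p^*p=v$ exists, I would exhibit an infinite strictly ascending chain of left ideals, contradicting the Noetherian hypothesis. The mechanism is that, in the corner ring $vL_K(E,w)v$, the element $p^*$ has left inverse $p$ (this is exactly $p^*p=v$) but is not right invertible, which is the classical source of non-Noetherianity. First I would record the identities $pv=vp=p$ and $p^*v=vp^*=p^*$, which hold because $v=s(p)=r(p)$. From these and $p^*p=v$ it follows that $e:=pp^*$ is an idempotent with $ve=ev=e$, since $e^2=p(p^*p)p^*=pvp^*=pp^*=e$; hence $f:=v-pp^*$ is an idempotent orthogonal to $e$. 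The crucial point is that $f\neq0$: as $pp^*$ is a nod-path of length $2|p|>0$ while $v$ is a trivial nod-path, Theorem \ref{thmhp} shows that $pp^*$ and $v$ are distinct basis vectors, so $pp^*\neq v$.

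From this single nonzero idempotent I would manufacture infinitely many. Put $e_i:=p^i\,f\,(p^*)^i$ for $i\geq0$ (so $e_0=f$). An easy induction using $p^*p=v$, $vp=p$ and $vp^*=p^*$ gives $(p^*)^ip^i=v$ for all $i$, and therefore the mixed products collapse as $(p^*)^ip^j=(p^*)^{i-j}$ when $i\geq j$ and $(p^*)^ip^j=p^{j-i}$ when $i\leq j$. Combining this with the two relations $p^*f=0$ and $fp=0$ (both immediate: $p^*f=p^*-p^*pp^*=p^*-p^*=0$ and $fp=p-pp^*p=p-p=0$) one checks the matrix-unit type identities $e_i^2=e_i$, $e_ie_j=0$ for $i\neq j$, and $e_i\neq0$; the last because $(p^*)^ie_ip^i=vfv=f\neq0$. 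Thus the $e_i$ are pairwise orthogonal nonzero idempotents of $L_K(E,w)$.

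Finally I would set $I_n:=L_K(E,w)e_0+\dots+L_K(E,w)e_n$, an ascending sequence of left ideals. Orthogonality forces strictness: if $e_{n+1}\in I_n$, writing $e_{n+1}=\sum_{i\leq n}a_ie_i$ and right-multiplying by $e_{n+1}$ gives $e_{n+1}=\sum_{i\leq n}a_ie_ie_{n+1}=0$, contradicting $e_{n+1}\neq0$. Hence $I_0\subsetneq I_1\subsetneq\cdots$ does not stabilise, so $L_K(E,w)$ is not Noetherian, which is the desired contradiction.

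I expect the only genuine work to be the verifications in the middle paragraph, and these are where the hypotheses enter: the whole algebra is driven by $p^*p=v$ (used to collapse $(p^*)^ip^i$ to $v$ and to kill $p^*f$ and $fp$), while the nod-path hypothesis on $pp^*$ is invoked exactly once, via Theorem \ref{thmhp}, to guarantee $f\neq0$. The main obstacle is therefore less a deep difficulty than careful bookkeeping: getting the exponent arithmetic in $(p^*)^ip^j$ right and confirming that orthogonality of the $e_i$ really does make the chain strict rather than merely non-decreasing.
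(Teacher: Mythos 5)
Your proof is correct, and it takes a route that is related to but genuinely distinct from the paper's: in fact both arguments produce the \emph{same} ascending chain of left ideals, but they verify its strictness by different mechanisms. The paper works directly with the elements $v-p^n(p^*)^n$: the containment $LI(v-p^n(p^*)^n)\subseteq LI(v-p^{n+1}(p^*)^{n+1})$ follows from the identity $(v-p^n(p^*)^n)(v-p^{n+1}(p^*)^{n+1})=v-p^n(p^*)^n$, and strictness is proved by assuming $a(v-p^n(p^*)^n)=v-p^{n+1}(p^*)^{n+1}$, multiplying on the right by $p^n$ and invoking Theorem \ref{thmhp} to conclude that $p^n-p^{n+1}p^*\neq 0$; this appeals to the linear independence of nod-paths at every stage $n$, and in particular uses that $p$ is a nod$^2$-path (so that $p^n$ and $p^{n+1}p^*$ are nod-paths). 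You instead package everything into the pairwise orthogonal nonzero idempotents $e_i=p^i(v-pp^*)(p^*)^i$. Since $e_i=p^i(p^*)^i-p^{i+1}(p^*)^{i+1}$, the sum $\sum_{i=0}^{n}e_i$ telescopes to $v-p^{n+1}(p^*)^{n+1}$, and conversely $e_je_i=\delta_{ij}e_i$ gives $e_j\bigl(v-p^{n+1}(p^*)^{n+1}\bigr)=e_j$ for $j\leq n$; so your $I_n$ coincides with the paper's $LI(v-p^{n+1}(p^*)^{n+1})$. What your route buys: Theorem \ref{thmhp} is invoked exactly once, to get $v-pp^*\neq 0$ from the hypothesis that $pp^*$ is a nod-path of positive length, after which strictness is pure ring theory valid in any ring containing infinitely many pairwise orthogonal nonzero idempotents; in particular you never need $p$ to be a nod$^2$-path, so your argument is marginally more general and isolates the classical phenomenon at work (an element with a one-sided but not two-sided inverse). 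What the paper's version buys is brevity. One terminological slip in your opening paragraph, harmless to the argument: $p^*p=v$ says that $p^*$ is a left inverse of $p$, i.e.\ that $p^*$ has a \emph{right} inverse $p$ (and fails to be left invertible since $pp^*\neq v$), not that ``$p^*$ has left inverse $p$''.
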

\begin{proof}
Assume that there is a nod$^2$-path $p$ based at a vertex $v$ such that $pp^*$ is a nod-path and $p^*p=v$ in $L_K(E,w)$. Let $n\geq 0$. One checks easily that $(v-p^n(p^*)^n)(v-p^{n+1}(p^*)^{n+1})=v-p^n(p^*)^n$. Hence $LI(v-p^n(p^*)^n)\subseteq LI(v-p^{n+1}(p^*)^{n+1})$. Assume now that there is an $a\in L_K(E,w)$ such that $a(v-p^n(p^*)^n)=v-p^{n+1}(p^*)^{n+1}$. Multiplying the equation by $p^n$ on the right we get $0=p^n-p^{n+1}p^*$ which cannot hold since $p^n-p^{n+1}p^*$ is a nontrivial linear combination of nod-paths. Hence $LI(v-p^n(p^*)^n)\subsetneq LI(v-p^{n+1}(p^*)^{n+1})$ and thus $L_K(E,w)$ is not Noetherian.
\end{proof}

\begin{theorem}\label{thm4}
If $L_K(E,w)$ is Noetherian, then it is locally finite.
\end{theorem}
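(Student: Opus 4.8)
The plan is to reduce the whole statement to Theorem \ref{thmm1}: that result says $L_K(E,w)$ is locally finite exactly when $(E,w)$ is finite, no cycle has an exit, and the weighted part of $(E,w)$ is well-behaved, so under the hypothesis that $L_K(E,w)$ is Noetherian it suffices to verify these three conditions, which I would do by contraposition, feeding each violation into one of the three lemmas of this section. I begin with finiteness. If $E^0$ were infinite, I pick distinct vertices $v_1,v_2,\dots$ and look at the left ideals $LI(v_1)\subseteq LI(v_1,v_2)\subseteq\cdots$. Multiplying a hypothetical identity $v_{n+1}=\sum_{i\le n}a_iv_i$ on the right by $v_{n+1}$ and using $v_iv_{n+1}=\delta_{v_i v_{n+1}}v_i$ forces $v_{n+1}=0$, which contradicts the linear independence of the vertices (Theorem \ref{thmhp}). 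Hence the chain is strictly ascending and $L_K(E,w)$ is not Noetherian; so $E^0$, and therefore (by row-finiteness) $(E,w)$, is finite.

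Next I would show the weighted part is well-behaved. If one of Conditions (i)--(v) of Definition \ref{defwb} failed, then, exactly as in the proof of Corollary \ref{cor3.2} (and \cite[Proof of Lemma 33]{preusser}), there would be a nod$^2$-path containing a weighted edge. Normalising this nod$^2$-path via \cite[Remark 16(b),(c)]{preusser} together with the index-lowering device from the proof of Lemma \ref{lem3.1}, and choosing the special edges conveniently (harmless, since the algebra, and hence the property of being Noetherian, is independent of that choice), I would extract a nod-path $x_1\dots x_n$ with $x_1=e_2$ and $x_n=e_2^*$ for some $e\in E^1_w$. This is precisely the configuration ruled out by Corollary \ref{cor4.1}, so no condition can fail.

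Finally, with the weighted part well-behaved, Condition (iv) forbids a cycle based at any vertex of $T(r(E^1_w))$; since the range of every weighted edge lies in $T(r(E^1_w))$, no cycle can contain a weighted edge, so every cycle of $(E,w)$ is unweighted. Suppose such a cycle $c=e^{(1)}\dots e^{(n)}$ had an exit. Then $p:=e^{(1)}_1\dots e^{(n)}_1$ is a cycle, hence a nod$^2$-path, in $\hat E$, and because every $e^{(i)}$ is unweighted, relation (ii) of Definition \ref{def3} applied at each vertex telescopes to give $p^*p=v$ with $v=s(p)$. Rotating $c$ so that its exit vertex is the source of the last edge and choosing the special edge there to be distinct from $e^{(n)}$ (possible since that vertex emits at least two edges), the edge $e^{(n)}$ becomes non-special, so the only real-to-ghost junction $e^{(n)}_1(e^{(n)}_1)^*$ in $pp^*$ is not forbidden and $pp^*$ is a nod-path. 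Lemma \ref{lem4.2} then yields that $L_K(E,w)$ is not Noetherian, a contradiction; hence no cycle has an exit, and all three conditions of Theorem \ref{thmm1}(ii) hold.

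The step I expect to be the main obstacle is the second one: converting the abstract failure of well-behavedness into the very specific nod-path $e_2\dots e_2^*$ demanded by Corollary \ref{cor4.1}. Each of the five forbidden constellations must be treated, and the delicate point is arranging the \emph{same} weighted edge $e$ at both ends while keeping the intervening word a nod-path; this is exactly where the normalisation of \cite[Remark 16]{preusser} and a careful choice of special edges carry the load. By contrast, the finiteness and no-exit steps are short, the latter because the unweighted-cycle-with-exit signature matches the hypotheses of Lemma \ref{lem4.2} almost verbatim.
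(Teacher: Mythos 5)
Your overall strategy (reduce everything to Theorem \ref{thmm1} and verify finiteness, well-behavedness of the weighted part, and the no-exit condition) is the paper's strategy, and your first and third steps essentially coincide with the paper's proof: the strictly ascending chain $LI(v_1)\subsetneq LI(v_1,v_2)\subsetneq\cdots$ for finiteness, and, for the no-exit condition, the nod$^2$-path $\hat c=e^{(1)}_1\dots e^{(n)}_1$ with the special edge at $s(e^{(n)})$ re-chosen so that $\hat c\hat c^*$ is a nod-path and $\hat c^*\hat c=v$, fed into Lemma \ref{lem4.2}. The gap is exactly where you yourself predicted it, and it is not merely a delicate point but a genuine failure: Conditions (iv) and (v) of Definition \ref{defwb} \emph{cannot} be reduced to Corollary \ref{cor4.1}, because when they fail there need not exist any nod-path beginning with $e_2$ and ending with $e_2^*$. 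Take $E^0=\{u,v\}$, $E^1=\{e,f\}$ with $e:u\to v$ of weight $2$ and $f:v\to u$ of weight $1$. Then $fe$ is a cycle based at $v=r(e)\in T(r(E^1_w))$, so Condition (iv) fails (and $L_K(E,w)$ is indeed not Noetherian). But the special edges are forced ($e^u=e$, $e^v=f$), so all words $e_ie_j^*$ and $f_1f_1^*$ are forbidden; consequently every nod-path starting with $e_2$ consists entirely of real edges (after $e_2$ only $f_1$ may follow, after $f_1$ only $e_1$ or $e_2$, and so on), and no such path can end with $e_2^*$. Corollary \ref{cor4.1} is therefore vacuously true in this example and yields no contradiction. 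Similarly, for Condition (v) let $u$ emit $e$ (weight $2$) and $f$ (weight $1$) to a common sink $v$: any occurrence of $e_2^*$ must be preceded by $f_1$ (since $e_ie_2^*$ is forbidden), and any occurrence of $f_1$ must be preceded by $e_2^*$ (since $e_1^*f_1$ and $f_1^*f_1$ are forbidden and nothing enters $u$), an infinite regress for a path starting with $e_2$; yet Condition (v) fails here.

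This is precisely why the paper does not use Corollary \ref{cor4.1} beyond Conditions (i)--(iii). For Condition (iv) it applies Lemma \ref{lem4.1} directly: $e_2o$ is a lenod-path (an index-$2$ letter can be preceded by anything), the lifted cycle $c$ is a nod$^2$-path, $e_2oc$ is a nod-path, and no ``return trip'' to $e_2^*$ is needed; a small case distinction handles $e_2o\sim c$. For Condition (v) it applies Lemma \ref{lem4.2}: by Lemma \ref{lem3.-1} the violating nod$^2$-path $o$ decomposes into super-special and unweighted pieces, whence $o^*o$ is a nod-path while $oo^*$ collapses to the vertex $s(e)$ in the algebra --- the exact opposite of a nod-path returning to $e_2^*$. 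To repair your argument, keep Corollary \ref{cor4.1} for Conditions (i)--(iii) only, and substitute these two arguments for Conditions (iv) and (v); your remaining steps then go through as written.
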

\begin{proof}
By Theorem \ref{thmm1} it suffices to show that $(E,w)$ is finite, no cycle has an exit and the weighted part of $(E,w)$ is well-behaved. Clearly $(E,w)$ is finite. Otherwise, taking a sequence $(v_n)_{n\geq 1}$  of pairwise distinct vertices one would have $LI(v_1)\subsetneq LI(v_1,v_2)\subsetneq\dots$. Next we will show that the weighted part of $(E,w)$ is weakly well-behaved.\\
One checks easily that if one of the Conditions (i), (ii) and (iii) in Definition \ref{defwb} is not satisfied, then there is a nod-path $q=x_1\dots x_n$ such that $x_1=e_2$ and $x_n=e_2^*$ for some $e\in E^1_w$ (cf. \cite[Proof of Lemma 33]{preusser}) which contradicts Corollary \ref{cor4.1}. Assume now that Condition (iv) in Definition \ref{defwb} is not satisfied. Then there is an $e\in E^1_w$, an $o$ which is either the empty word or a path in $\hat E$ and a cycle $c$ in $\hat E$ such that $e_2oc$ is a nod-path. Clearly we may assume that none of the edges of $c$ appears in $o$. If $e_2o\not\sim c$, then we can apply Lemma \ref{lem4.1} to get a contradiction. If $e_2o\sim c$, then $o$ must be the empty word and $c=x_1\dots x_{n-1} e_2$. But then we can replace $c$ by the cycle $c':=x_1\dots x_{n-1} e_1$ and apply Lemma \ref{lem4.1} to get a contradiction. Hence Condition (iv) is satisfied and therefore the weighted part of $(E,w)$ is weakly well-behaved.\\
It remains to show that Condition (v) in Definition \ref{defwb} is satisfied and that no cycle has an exit. Assume that Condition (v) is not satisfied. Then there is a nod$^2$-path $o=x_1\dots x_m$ such that $x_1=e_2$ for some $e\in E^1_w$. By Lemma \ref{lem3.-1}, $o=p_1q_1^*\dots p_nq_n^*$ or $o=p_1q_1^*\dots p_{n-1}q_{n-1}^*p_n$ where $n\geq 1$, $p_1,\dots,p_n$ are super-special paths in $\hat E$ and $q_1,\dots,q_n$ are unweighted paths in $\hat E$. Set $v:=s(e)$. One checks easily that $o^*o$ is a nod-path and $oo^*=v$ in $L_K(E,w)$ (note that $p_ip_i^*=s(p_i)$ and $q^*_iq_i=r(q_i)$ for any $i$ since the $p_i$'s are super-special and the $q_i$'s are unweighted). But that contradicts Lemma \ref{lem4.2}. Hence Condition (v) is satisfied and therefore the weighted part of $(E,w)$ is well-behaved.\\
Assume now that there is a cycle $c=e^{(1)}\dots e^{(n)}$ in $(E,w)$ with an exit $f\in E^1$. Clearly $e^{(1)},\dots, e^{(n)}\in E^1_u$ because of Condition (iv) in Definition \ref{defwb}. W.l.o.g. assume that $s(f)=s(e^{(n)})$ and $f\neq e^{(n)}$. Clearly we can choose $e^{s(e^{(n)})}\neq e^{(n)}$ since $w(e^{(n)})=1$. Set $v:=s(c)$ and $\hat c:=e^{(1)}_1\dots e^{(n)}_1$. Then clearly $\hat c$ is a nod$^2$-path based at $v$, $\hat c\hat c^*$ is a nod-path and $\hat c^*\hat c=v$. But that contradicts Lemma \ref{lem4.2}. Hence no cycle in $(E,w)$ has an exit. 
\end{proof}

\section{Summary}
By Theorems \ref{thmm1}, \ref{thm5} and \ref{thm4} we have the following result:
\begin{theorem}\label{thmm}
Let $K$ be a field and $(E,w)$ a row-finite weighted graph. Then the following are equivalent:
\begin{enumerate}[(i)]
\item $L_K(E,w)$ is locally finite.
\item $(E,w)$ is finite, no cycle has an exit and the weighted part of $(E,w)$ is well-behaved.
\item $(E,w)$ is finite and $\GKdim L_K(E,w)\leq 1$.
\item $L_K(E,w)$ is isomorphic to a locally finite Leavitt path algebra.
\item $L_K(E,w)\cong(\bigoplus\limits_{i=1}^{l} M_{m_i}(K))\oplus(\bigoplus\limits_{j=1}^{l'} M_{n_j}(K[x,x^{-1}]))$ for some integers $l,l'\geq 0$ and $m_i,n_j\geq 1$.
\item $L_K(E,w)$ is Noetherian.
\end{enumerate}
\end{theorem}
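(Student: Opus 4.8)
The plan is to prove this theorem purely by assembling the three main results already established in the preceding sections — Theorems \ref{thmm1}, \ref{thm5} and \ref{thm4} — together with the known structure theorem for locally finite Leavitt path algebras, and to arrange the six conditions into a single cycle of implications. The first step is to dispose of the equivalence of the first three conditions at once: conditions (i), (ii) and (iii) are literally the three equivalent statements of Theorem \ref{thmm1}, so (i) $\Leftrightarrow$ (ii) $\Leftrightarrow$ (iii) requires no further argument and may be quoted verbatim.

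To incorporate the remaining conditions (iv), (v) and (vi) I would close a cycle running (i) $\Rightarrow$ (iv) $\Rightarrow$ (v) $\Rightarrow$ (vi) $\Rightarrow$ (i). The implication (i) $\Rightarrow$ (iv) is the first assertion of Theorem \ref{thm5}. For (iv) $\Rightarrow$ (v) I would invoke the structure of locally finite Leavitt path algebras: by \cite[Theorem 4.2.17]{abrams-ara-molina} every locally finite Lpa is isomorphic to a finite direct sum of matrix algebras over $K$ and $K[x,x^{-1}]$, and composing isomorphisms yields (v). The implication (v) $\Rightarrow$ (vi) is then a routine ring-theoretic observation: $K[x,x^{-1}]$ is Noetherian, hence so is every finite matrix ring over it, and a finite direct sum of Noetherian rings is Noetherian. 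Finally (vi) $\Rightarrow$ (i) is exactly Theorem \ref{thm4}. Combined with the block (i) $\Leftrightarrow$ (ii) $\Leftrightarrow$ (iii), this cycle establishes that all six conditions are equivalent.

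The one point that needs care — and the reason the cycle is routed through the Noetherian condition rather than closed more directly — is that local finiteness is a property of the standard $\mathbb{Z}^n$-grading, whereas the isomorphism witnessing (iv) or (v) need not be graded. Hence one cannot simply transport local finiteness back to $L_K(E,w)$ along such an isomorphism, and the naive shortcut (iv) $\Rightarrow$ (i) is not available. The Noetherian property, by contrast, is a purely ring-theoretic invariant that survives any ring isomorphism (and, as noted in Section 6, since $L_K(E,w)$ carries an involution, left-, right- and two-sided Noetherianity coincide). This is precisely what makes the passage (iv) $\Rightarrow$ (v) $\Rightarrow$ (vi) $\Rightarrow$ (i), with the final step supplied by Theorem \ref{thm4}, the correct route back to condition (i); I expect this observation, rather than any calculation, to be the only subtle part of the proof.
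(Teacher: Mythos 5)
Your proposal is correct and matches the paper's own proof, which likewise obtains Theorem \ref{thmm} by assembling Theorem \ref{thmm1} (for (i)$\Leftrightarrow$(ii)$\Leftrightarrow$(iii)), Theorem \ref{thm5} (for the passage to (iv), (v) and (vi) via \cite[Theorem 4.2.17]{abrams-ara-molina}), and Theorem \ref{thm4} (for (vi)$\Rightarrow$(i)). Your closing remark about why the cycle must return through the Noetherian condition --- since the isomorphisms in (iv) and (v) are not graded --- is exactly the point that makes Section 6 of the paper necessary.
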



\begin{thebibliography}{99}


\bibitem{abrams-ara-molina} G. Abrams, P. Ara, M. Siles Molina, Leavitt path algebras, Lecture Notes in Mathematics {\bf 2191}, Springer, 2017.












\bibitem{hazrat13} R. Hazrat, \emph{The graded structure of Leavitt path algebras}, Israel J. Math. {\bf 195} (2013), no. 2, 833--895. 


\bibitem{hazrat-preusser} R. Hazrat, R. Preusser, \emph{Applications of normal forms for weighted Leavitt path algebras: simple rings and domains}, Algebr. Represent. Theor. {\bf 20} (2017), 1061–-1083. 











\bibitem{preusser} R. Preusser, \emph{Weighted Leavitt path algebras of finite Gelfand-Kirillov dimension}, arXiv:1804.09287 [math.RA]. 




\end{thebibliography}
\end{document}